\newtheorem{theorem}{Theorem}[section]
\newtheorem{lemma}[theorem]{Lemma}
\theoremstyle{definition}
\newtheorem{cor}[theorem]{Corollary}
\theoremstyle{remark}
\newtheorem{remark}[theorem]{Remark}
\newcommand{\comments}[1]{}
\renewcommand{\phi}{\varphi}
\newcommand{\R}{\mathbb R}
\newcommand{\D}{\displaystyle }
\newcommand{\Z}{\mathbb Z}
\newcommand{\N}{\mathbb N}
\newcommand{\cal}{\mathcal }
\newcommand{\bu}{\mathbf u}
\newcommand{\bv}{\mathbf v}
\newcommand{\fd}{\mathfrak D}
\newcommand{\bbv}{\mathbb V}
\newcommand{\ra}{\rightarrow}
\newcommand{\h}{\mathbb H}
\newcommand{\hh}{\dot{\h}}
\newcommand{\dt}{\frac{d}{dt}}
\newcommand{\cf}{\cal F}
\newcommand{\cs}{\cal S}
\newcommand{\cm}{\cal M}
\newcommand{\cl}{\cal L}
\newcommand{\hha}{\hh^{\alpha}_p}
\newcommand{\ati}{\alpha_{T_i}}
\newcommand{\eqn}{\begin{eqnarray}}
\newcommand{\een}{\end{eqnarray}}
\numberwithin{equation}{section}
\begin{document}

\title[Gevrey regularity for dissipative equations]{Gevrey regularity for a class of dissipative equations with analytic nonlinearity}

\author{\small Hantaek Bae$^\dagger$}
\address{\small Department of Mathematics, University of California, Davis, U.S.A}
\email{hantaek@math.ucdavis.edu}
\thanks{$^\dagger$\ {\em Corresponding author (Email: hantaek@math.ucdavis.edu)}}

\author{\small Animikh Biswas}
\address{\small Department of Mathematics and Statistics, University of Maryland, Baltimore County,
U.S.A.}
\email{abiswas@umbc.edu}
\thanks{The research of A. Biswas was supported in part by NSF grant DMS11-09532}

\subjclass[2010]{Primary 76D03, 35Q35, 76D05; Secondary 35J60, 76F05}


\keywords{Dissipative equations; Gevrey regularity; Temporal decays.}

\begin{abstract}
In this paper, we establish Gevrey class regularity of solutions to a class of dissipative equations with an analytic nonlinearity in the whole space. This generalizes the results of Ferrari and Titi in the periodic space case  with initial data in $L^2-$based Sobolev spaces to the $L^p$ setting and in the whole space. Our generalization also includes considering rougher initial data, in negative Sobolev spaces in some cases including the Navier-Stokes and the subcritical quasi-geostrophic equations, and allowing the dissipation operator to be a fractional Laplacian.
Moreover, we derive global (in time) estimates in Gevrey norms which yields decay of higher order derivatives
which are optimal.
 Applications include (temporal) decay of solutions in  higher Sobolev norms for a large class of equations including the Navier-Stokes equations, the subcritical quasi-geostrophic equations, nonlinear heat equations with fractional dissipation, a variant of the Burgers' equation with a cubic or higher order nonlinearity, and the generalized Cahn-Hilliard equation. The decay results for the last three cases seem to be new while our approach provides an alternate proof for the recently obtained $L^p\, (1<p<2)$ decay result for the Navier-Stokes equations by Bae, Biswas and Tadmor. These applications follow from our global Gevrey regularity result for initial data in \emph{critical spaces}  with low regularity.
\end{abstract}

\maketitle


\section{INTRODUCTION}
It is known that regular solutions of many dissipative equations, such as the Navier-Stokes quations (NSE), the Kuramoto-Sivashinsky equation, the surface quasi-geostrophic equation,   the Smoluchowski equation,
the (periodic)  Cahn-Hilliard equation and Voight regularizations of Navier-Stokes and MHD equations are in fact  analytic \cite{masuda, ft1, biswas2007gevrey, dong, vuka2005, swanson11, larios10}.  In fluid-dynamics, the space analyticity radius has an important physical interpretation:  below this length scale, the viscous effects dominate the nonlinear effects and the Fourier spectrum decays exponentially  \cite{foias, hkr, hkr1, dt}. In other words,  the space analyticity radius yields a Kolmogorov type length scale  encountered in turbulence theory.  This fact concerning exponential decay can be used to show that the finite dimensional Galerkin approximations  converge exponentially fast; for instance, in the case of the complex Ginzburg-Landau equation,  analyticity estimates are used in \cite{doel-t} to rigorously explain  numerical observation  that the solutions to this equation can be accurately represented by a very low-dimensional  Galerkin approximation and the ``linear" Galerkin performs as well as the nonlinear one.  Other applications of analyticity radius  occur in establishing sharp temporal decay rates of solutions in higher Sobolev norms  \cite{oliver2000remark}, establishing geometric  regularity criteria for the Navier-Stokes equations, and in measuring the  spatial complexity of fluid flow \cite{kuk, kuk1, g}.

In this paper, we consider a nonlinear evolution equation of the form
\eqn  \label{eq:1.1}
u_t + \Lambda^{\kappa} u = G(u), \quad (\kappa > 1, t\ge 0), \quad  u_0 \in \cal{L},
\een
where the Banach space ${\cal L}$ considered here is a  potential space of adequate (positive or negative) regularity. The operator $\Lambda = (-\Delta)^{1/2}$ is assumed to be defined on dense subspace ${\cal D}(\Lambda) \subset {\cal L}$. The term $G(u)$ is a possibly nonlocal, analytic  function of $u$; see equation (\ref{eq:1.3}) for a precise description. The equation (\ref{eq:1.1}) will model dissipative evolutionary partial differential equations on the whole space $\R^d$ or on the periodic space $\mathbb{T}^{d}$. In this setting, we establish spatial analyticity  and  provide  decay  rates of  higher order norms of solutions to (\ref{eq:1.1}).

\subsection*{Analyticity}
It should be noted that the class of equations we consider encompasses many nonlocal equations (for instance, the Navier-Stokes and the quasi-geostrophic equations). Therefore, even the fact that the solutions are analytic for positive times, do not follow from Cauchy-Kovaleskaya type theorems. To overcome this difficulty, we take the mild solution approach initiated by Fujita and Kato \cite{fujita1964navier}, Giga and Miyakawa \cite{gm1985} and Weissler \cite{weis1980} for the Navier-Stokes equations, and study the evolution of Gevrey norms. The use of Gevrey norms was pioneered by Foias and Temam \cite{ft} for estimating space analyticity radius for the Navier-Stokes equations and was subsequently used by many authors (see \cite{cao1999navier, ferrari1998gevrey, biswas2007existence, biswas2010navier, biswas2012}, and the references there in); a closely related approach can be found in \cite{gk}. The Gevrey class approach taken here enables  one to avoid cumbersome recursive estimation  of higher order derivatives and is known to yield optimal estimates of the analyticity radius  \cite{oliver2000remark, optimaltiti}. (Other approaches to analyticity can be found in \cite{pav, miura2006, guber} for the 3D NSE, \cite{dong, dong1}  for the surface quasi-geostrophic equation, and \cite{ang} for certain nonlinear analytic semiflows.) We also note that most of the work has been considered in $L^{2}$-based Sobolev spaces. For example, in case the dissipation operator is $\Lambda^{2}=-\Delta$, equation (\ref{eq:1.1}) with an analytic nonlinearity  was  studied in \cite{ferrari1998gevrey}  and \cite{crt} for the periodic space case and the sphere case respectively. Their approach was based on energy technique and their space of initial data comprised of sufficiently smooth functions belonging to adequate ($L^2$-based) Sobolev spaces. By contrast, we work on $L^p$-based Banach spaces of initial data which allow us to consider much rougher (even distributional) initial data. In fact, the spaces we choose here, which are called \emph{critical spaces},  precisely correspond to a scale invariance property of the equation. In this paper, we show that solutions to (\ref{eq:1.1}) are {\em Gevrey regular}, i.e., they satisfy the estimate 
\[
{\D \sup_{0<t<T} \left\|e^{t^{1/\kappa} \Lambda}u \right\|_{\cal L}< \infty},
\]
locally in time for initial data of arbitrary size, and globally in time if the initial data is small  in critical spaces. Our main tool to show such estimates is a generalization of the Kato-Ponce inequality \cite{Kato2} to Gevrey spaces.

\subsection*{Decay Estimates}
The main emphasis in this paper is to obtain global (in time) Gevrey regular solutions to (\ref{eq:1.1}) for small initial data in critical regularity spaces. This has several applications in the study of long term dynamics. It turns out (as we show here) that many of the equations encountered in fluid dynamics has the property that for large times, the solutions have small norm in these critical regularity spaces. Thus,  as a consequence of our result, we obtain exponential decay of Fourier coefficients in the periodic case and algebraic decay of higher order $L^p$ based Sobolev norms in the whole space for a wide class  of equations including the Navier-Stokes equations, sub-critical quasi-geostrophic equation,  a variant of Burgers' equation with a higher order polynomial nonlinearity, the generalized Cahn-Hilliard equation  and a nonlinear heat equation.  In some cases (for example, the Navier-Stokes, the subcritical quasi-geostrophic equations and nonlinear heat equations with fractional dissipation) this generalizes known results  while in some others (for example, Burgers' equation with cubic or higher order nonlinearity and the Cahn-Hilliard equation),  the results are new to the best of our knowledge.  For instance, for the Navier-Stokes equations,  in \cite{oliver2000remark}, sharp upper and lower bounds for the (time) decay of $L^2$-based higher order Sobolev norms in  the whole space $\R^d$ was established for a certain class of initial data. Our result yields  decay for $L^p$-based ($p>1$) higher order homogeneous Sobolev norms for the Navier-Stokes equations, and for a larger class of initial data. Although the $L^p$ result for $p>2$ can be deduced from the $L^2$ case by Sobolev inequality,  this is not the case for $L^p$ with $1<p<2$. The $L^p$ decay results for $1<p <2$ in  the whole space $\R^d$   were obtained only recently in \cite{biswas2012}. Our approach here provides an alternate proof of this result while avoiding the sophisticated  machinery used in \cite{biswas2012}.  As another corollary of our general result, we also recover a similar decay result in \cite{dong} for the quasi-geostrophic equations with a different proof that avoids the iterative estimation of higher order derivatives.  In summary, our main results provide an unified approach to a variety of decay results, some existing and some new \cite{oliver2000remark, schon1995, schonwieg, dong},  thus generalizing them to: a wider class of equations; to $L^p$ decay; and allowing for a larger class of initial data. A more detailed comparison of our results {\em vis a vis} some known results is made in the remarks subsequent to the relevant theorems.

The organization of this paper is as follows. In Section 2, we state our main results while in Section 3, we state our main applications concerning decay. Sections 4 and 5 are devoted to the proofs of these results while in the Appendix, we have included some requisite background on Littlewood-Paley decomposition of functions.

\section{MAIN RESULTS} \label{sec:2}

Before describing our main results, we start by  establishing some notation and concepts.

A function $u \in C([0,T];{\cal L})$ is said to be a \emph{strong} solution of (\ref{eq:1.1}) if
$\partial_t u $ exists and $u \in {\cal D}(\Lambda^\kappa)\, a.e. $  and the equation (\ref{eq:1.1}) is satisfied $a.e.$ A \emph{mild} solution of (\ref{eq:1.1}) is a solution  of the corresponding integral equation
\eqn  \label{eq:1.2} 
u(t)=(Su)(t), \quad (Su)(t):= e^{-t\Lambda^{\kappa}}u_0 + \int_0^{t} e^{-(t-s)\Lambda^{\kappa}}G(u(s)) ds,
\een
where $u$ is assumed to belong to $C([0,T];{\cal L})$.  The integral on the right hand side of (\ref{eq:1.2}) is interpreted as a Bochner integral. Henceforth, by a solution to (\ref{eq:1.1}) we will mean a mild solution, which is a fixed point of the map $S$. For a discussion on the connection between weak, strong, mild and classical solutions see \cite{sell2002}.

For each $i=0,\cdots ,n$, let  $T_i$ be a bounded operator mapping $\hh^{\alpha + \ati}_p, \ati \ge 0$ to $\hha $ such that $T_i$ commutes with $\Lambda $ and its operator  norm is bounded uniformly with respect to $\alpha $ and $p$. More precisely, we assume that there exists  constants $C>0$ and $\ati \ge 0$ such that for all $\alpha \in \R, 1 < p < \infty $ and $ v \in \hha$, we have
\eqn \label{eq:2.1}
\left\|T_i v\right\|_{\hha} \le C \|v\|_{\hh^{\alpha + \ati}_p}, \quad T_i \Lambda v = \Lambda T_i v.
\een
Examples of such operators are Fourier multipliers with symbols
$$
m_i (\xi) = \Omega_i(\xi)\xi^{\vec{\alpha}_{T_i}} \quad \text{or} \quad m_i (\xi) = \Omega_i(\xi)|\xi|^{\ati},
$$
where $\Omega_i(\cdot)$ are bounded homogeneous functions of degree zero (i.e., $\Omega_i(c\xi) = \Omega(\xi), c \in \R$) and $\vec{\alpha}_{T_i} = \left(\ati^{(1)}, \cdots , \ati^{(d_1)}\right), \ \ati^{(j)} \ge 0$. Other examples also include shift operators on
the space of distributions (in which case $\ati =0$).
\comments{for the first case in (\ref{eq:2.1}) while  $\ati \ge 0$ for the second case in (\ref{eq:2.1}). We will also henceforth denote  $\ati =\sum_{j=1}^{d_{1}}\alpha^{(j)}_{T_i}$  for the first case in (\ref{eq:2.1}), and .}

The nonlinearity $G$ that we consider has the form
\eqn \label{eq:1.3}
G(u) = T_{0} F\left(T_1 u,T_2 u,\cdots,T_n u\right), \quad F(z_1, \cdots, z_n)= \sum_{\alpha \in \Z_+^n}a_{\alpha}z^{\alpha},
\een
where $F$ is an analytic function defined on a neighborhood of the origin in $\R^n$. More specific
assumptions on $F$ will be made in subsequent sections.

\subsection{A homogeneous degree $n$ nonlinearity}
In this section, we assume that $F$ is a monomial of degree $n$, i.e.,
\eqn \label{eq:2.2}
G(u) = T_{0} F \left(T_1 u,T_2 u,\cdots,T_n u \right), \quad F(z_1, \cdots, z_n)=  z_1 \cdots z_n,
\een
for some $n \ge 2, n \in \N$. However, see Remark \ref{rem:extension} for an extension to a general homogeneous polynomial of degree $n$. The homogeneous potential spaces are defined by
\[
\hh^{\alpha}_p = \left\{f \in {\cal S}'(\R^d): \Lambda^{\alpha}f \in L^p(\R^d), \  \|f\|_{\hh^{\alpha}_p}:=\|\Lambda^{\alpha}f\|_{L^p} < \infty \right\}, \quad \alpha \in \R.
\]
For $\xi \in \R^d$, denote $\|\xi\|_1 = \sum_{i=1}^d |\xi_i|$ while $\|\xi\|=\big(\sum^{d}_{i=1} \xi_i^2\big)^{1/2}$ denotes the usual Euclidean norm on $\R^d$. Recall that the norms $\|\cdot \|_1$ and $\|\cdot\|$ on $\R^d$ are equivalent. Let $\Lambda_1$ be a Fourier multiplier whose symbol is  given by $m_{\Lambda_1}(\xi)=|\xi|_1$. Choose (and fix) a constant $c>0$ such that  $c|\xi|_1 < \frac{1}{4}|\xi|$ for all $\xi \in \R^d$. This is possible since all norms on $\R^d$ are equivalent.  With this notation, we define the  Gevrey norm in $\hh^{\beta}_p$ to be
\begin{gather}  \label{eq:2.3}
\begin{split}
\left\|v\right\|_{Gv(s,\beta,p)} =  \left\|e^{cs^{1/\kappa}\Lambda_1}\Lambda^{\beta}v \right\|_{L^p}= \left\|e^{cs^{1/\kappa}\Lambda_1}v \right\|_{\hh^{\beta}_p}, \quad s \ge 0, \ \ \beta \in \R,  \ \ 1\le p \le \infty. 
\end{split}
\end{gather}
Before stating our result here, we need to determine an appropriate function space for the initial data.
The idea is to choose a function space where the linear term $u_t + \Lambda^{\kappa} u$ and the nonlinear term $G(u)$ have the same regularity. This can be interpreted in terms of the scaling invariance. The equation (\ref{eq:1.1}) with the nonlinearity (\ref{eq:2.2}) satisfies the following scaling. Assume that $u$ is a solution of (\ref{eq:1.1}). Then, the same is true for the rescaled functions
\[
u_{\lambda}(t,x)=\lambda^{s}u\left(\lambda^{\kappa}t, \lambda x\right), \quad s=\frac{1}{n-1} \left(\kappa-\sum^{n}_{i=0}\alpha_{T_{i}}\right).
\]
Therefore, $\hh^{\beta_{c}}_p$ is the scaling invariant space for initial data, with
\eqn \label{eq:2.12}
\beta_{c}:=\frac{d}{p} -\frac{1}{n-1} \left(\kappa-\sum^{n}_{i=0}\alpha_{T_{i}}\right)
\een
satisfying that
\eqn
u_{\lambda0}=\lambda^{s}u_{0}(\lambda x), \quad \left\| u_{0}\right\|_{\hh^{\beta_{c}}_p}= \left\| u_{\lambda0}\right\|_{\hh^{\beta_{c}}_p}.
\een
We can expect the local existence for $\beta>\beta_{c}$ for large data and the global existence for $\beta=\beta_{c}$ for small data. Recall also that for $\beta < \frac{d}{p}$, $\hh^{\beta}_p(\R^d)$ is a Banach space while for $\beta \ge \frac{d}{p}$, it is a normed space which is not complete  \cite{Danchin, bahourifourier}.

\begin{theorem} \label{thm:2.8}
Let $G$ be a nonlinearity as in (\ref{eq:2.2}). Assume that the following condition holds:
\eqn \label{eq:2.13}
\sum_{i=0}^n\ati \le \kappa, \quad  \min_{1\le i \le n}\ati > \max\left\{ \frac{\sum_{i=1}^n\ati}{n} - \frac{d}{np}, \ \   \frac{\sum_{i=0}^n\ati -\kappa}{n-1} \right\}.
\een
Let  $u_0 \in \hh^{\beta_0}_p$, with
\[
\frac{d}{p} - \frac{\kappa-\sum_{i=0}^n\ati }{n-1} =\beta_c \le \beta_0 < \min\left\{\frac{d}{p}, \frac{d}{p} + \min_{1\le i \le n}\ati\right\}.
\]
Then there exists $T=T(u_0)>0$ and $\beta >0$ with $\beta_0+\beta >0$, and a solution of (\ref{eq:1.2}) belonging to the space $C([0,T]; \hh^{\beta_0}_p)$ which additionally satisfies
\eqn  \label{eq:2.14}
\max \left\{\sup_{o<t <T} \|u(t)\|_{Gv(t,\beta_0,p)}, \hspace{0.2cm} \sup_{0<t<T}t^{\beta/\kappa}\|u(t)\|_{Gv(t,\beta_0+\beta,p)} \right\} \le  2\|u_0\|_{\hh^{\beta_0}_p}.
\een
Moreover, if $\beta_0 > \beta_c$,  the time of existence $T$ is given by
\[
T \ge C/ \|u_0\|_{\hh^{\beta_0}_p}^{(\beta_0-\beta_c)/\kappa},
\]
for some constant $C$  independent of $u_0$. On the other hand, in case $u_0 \in \hh^{\beta_c}_p$, there exists $\epsilon >0$ such that whenever  $\|u_0\|_{\hh^{\beta_c}_p}<\epsilon$, we can take $T= \infty$.
\end{theorem}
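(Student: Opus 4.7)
The plan is to use the Picard fixed-point scheme for the mild formulation (1.2), but in a Banach space tailored to capture both the base regularity $\hh^{\beta_0}_p$ and the Gevrey-class gain $Gv(t,\beta_0+\beta,p)$ with the auxiliary exponent $t^{\beta/\kappa}$. Specifically, for a parameter $\beta > 0$ to be chosen, I would introduce
\[
X_T := \Big\{ u \in C([0,T];\hh^{\beta_0}_p) : \|u\|_{X_T} < \infty \Big\}, \quad \|u\|_{X_T} := \max\Big\{\sup_{0<t<T}\|u(t)\|_{Gv(t,\beta_0,p)},\ \sup_{0<t<T} t^{\beta/\kappa}\|u(t)\|_{Gv(t,\beta_0+\beta,p)}\Big\},
\]
and seek a fixed point of the map $S$ in the closed ball $B_R \subset X_T$ of radius $R = 2\|u_0\|_{\hh^{\beta_0}_p}$. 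The auxiliary index $\beta$ must be chosen large enough that the nonlinear multilinear estimate works (this is where the conditions (2.13) enter), while small enough that $\beta_0 + \beta$ remains below the forbidden threshold $d/p$.

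For the linear piece $e^{-t\Lambda^\kappa}u_0$, the key point is that in Fourier variables the combined symbol is $\exp(ct^{1/\kappa}|\xi|_1 - t|\xi|^\kappa)$. Since $c$ was fixed so that $c|\xi|_1 \le |\xi|/4$, a splitting $ct^{1/\kappa}|\xi|_1 - t|\xi|^\kappa \le \frac{t|\xi|^\kappa}{2} \cdot (\text{small}) - \frac{t|\xi|^\kappa}{2}$ (using Young's inequality in the form $t^{1/\kappa}|\xi| \le C + \tfrac{1}{2}t|\xi|^\kappa$ after dyadic localization) shows that $e^{ct^{1/\kappa}\Lambda_1}e^{-t\Lambda^\kappa}$ is an $L^p$-bounded Fourier multiplier uniformly in $t$. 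Combined with the dissipative smoothing $\|\Lambda^\beta e^{-t\Lambda^\kappa}f\|_{L^p}\lesssim t^{-\beta/\kappa}\|f\|_{L^p}$, this handles the linear contribution in both norms.

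The main obstacle, and the heart of the proof, is the $n$-linear estimate controlling
\[
\left\|e^{ct^{1/\kappa}\Lambda_1}\Lambda^{\beta_0 + \beta}\, T_0\!\left(\prod_{i=1}^n T_i u_i\right)\right\|_{L^p}
\]
by the product $\prod_i \|u_i\|_{Gv(t,\beta_0+\beta,p)}$ (possibly with some factors using the lower-regularity norm), up to controllable powers of $t$. The decisive algebraic identity is the sub-additivity $|\xi_1+\dots+\xi_n|_1 \le \sum|\xi_i|_1$, which gives the multiplicative decomposition
\[
e^{ct^{1/\kappa}|\xi_1+\cdots+\xi_n|_1} \le \prod_{i=1}^n e^{ct^{1/\kappa}|\xi_i|_1},
\]
so that the Gevrey weight distributes across the product in physical space. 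One then applies the operator-boundedness assumption (2.1) on each $T_i$ and invokes a Kato--Ponce/fractional Leibniz rule (as referenced after (2.3)) in $\hh^{\beta_0+\beta}_p$ with Littlewood--Paley decomposition, distributing $\alpha_{T_0}+\beta_0+\beta$ derivatives among the $n$ factors. The first inequality in (2.13), $\sum_{i=0}^n \alpha_{T_i} \le \kappa$, ensures that the temporal integrand in $\int_0^t (t-s)^{-(\beta_0+\beta+\sum\alpha_{T_i})/\kappa}\cdots ds$ is integrable near $s=t$; the lower bound on $\min \alpha_{T_i}$ guarantees both that $\beta_c$ is below $d/p$ and that the singularity at $s=0$ (stemming from the factor $s^{-\beta/\kappa}$) is integrable after summing the contributions.

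Assembling these estimates yields $\|S u\|_{X_T} \le \|u_0\|_{\hh^{\beta_0}_p} + C T^{(\beta_0-\beta_c)/\kappa}\|u\|_{X_T}^n$ for $\beta_0 > \beta_c$ and the analogous bound with $T^{(\beta_0-\beta_c)/\kappa}$ replaced by $C\|u_0\|_{\hh^{\beta_c}_p}^{n-1}$ in the critical case $\beta_0=\beta_c$. A parallel difference estimate gives contraction on $B_R$; Banach's fixed point theorem then produces the claimed solution, with $T \gtrsim \|u_0\|_{\hh^{\beta_0}_p}^{-(\beta_0-\beta_c)/\kappa}$ in the subcritical setting and $T=\infty$ whenever $\|u_0\|_{\hh^{\beta_c}_p} < \varepsilon$ for a suitable $\varepsilon$ depending only on the structural constants.
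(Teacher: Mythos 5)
Your overall architecture --- mild formulation, a two-component norm $\max\{\sup_t\|u\|_{Gv(t,\beta_0,p)},\ \sup_t t^{\beta/\kappa}\|u\|_{Gv(t,\beta_0+\beta,p)}\}$, semigroup smoothing for the linear part, and a contraction argument whose time factor degenerates exactly at $\beta_0=\beta_c$ --- is the same as the paper's. The genuine gap is in the step you call the ``heart of the proof.'' The subadditivity $\|\xi+\eta\|_1\le\|\xi\|_1+\|\eta\|_1$ gives only a \emph{pointwise inequality between Fourier symbols}, namely $e^{a\|\xi+\eta\|_1}\le e^{a\|\xi\|_1}e^{a\|\eta\|_1}$; for $1<p<\infty$ with $p\neq 2$ one cannot pass from a pointwise domination of symbols to an $L^p$ bound on the corresponding bilinear operator, so the assertion that ``the Gevrey weight distributes across the product in physical space'' does not follow from this alone. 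The paper's Lemma \ref{lem:2.6} supplies the missing mechanism: writing $B_a(f,g)=e^{a\Lambda_1}\bigl((e^{-a\Lambda_1}\phi)(e^{-a\Lambda_1}\psi)\bigr)$ and splitting the frequency domain according to the signs of $\xi_j$, $\eta_j$, $\xi_j+\eta_j$ in each coordinate, one finds that on each region the correction $\|\xi+\eta\|_1-\|\xi\|_1-\|\eta\|_1$ is, coordinatewise, either $0$ or $-2|\xi_j|$ or $-2|\eta_j|$. This yields the exact operator identity $B_a(f,g)=\sum K_{\vec\alpha}\bigl((Z_{a,\vec\alpha,\vec\beta}\phi)(Z_{a,\vec\alpha,\vec\gamma}\psi)\bigr)$, where the $K$'s and $Z$'s are Hilbert-transform-type and Poisson-type multipliers bounded on $L^p$ uniformly in $a\ge 0$; only then can the classical Kato--Ponce inequality be applied to the genuine products inside. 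Without some version of this decomposition (or an equivalent device), your $n$-linear estimate is unproved for $p\neq 2$.

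A secondary, smaller issue: you run the fixed point in the ball of radius $2\|u_0\|_{\hh^{\beta_0}_p}$ centered at the origin. In the critical case $\beta_0=\beta_c$ your closing condition becomes a smallness condition on $\|u_0\|_{\hh^{\beta_c}_p}$, so you only recover the small-data global statement and lose the local-in-time existence for \emph{large} critical data, which the theorem also asserts. The paper avoids this by working in the ball of radius $M$ centered at the free evolution $e^{-t\Lambda^\kappa}u_0$, where $M=\sup_t t^{\beta/\kappa}\|e^{-t\Lambda^\kappa}u_0\|_{Gv(t,\beta_0+\beta,p)}$ satisfies $M\to 0$ as $T\to 0$ by a density argument (Lemma \ref{lem:2.5}); smallness of $M$, not of $\|u_0\|$, is what closes the critical iteration locally in time. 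Finally, the contraction factor should carry the exponent $\mu=(n-1)(\beta_0-\beta_c)/\kappa$ rather than $(\beta_0-\beta_c)/\kappa$; this is a bookkeeping slip but it changes the resulting lower bound on $T$.
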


\begin{remark}  \label{rem:extension}
The above theorem can be readily generalized to the case of a nonlinearity where $F$ in  (\ref{eq:2.2}) is a homogeneous polynomial of degree $n$ with the following property:
\begin{equation*}  
\begin{split}
& F(z_1, \cdots, z_n) = \sum_{\alpha \in {\cal S}}a_{\alpha}z^{\alpha}, \quad \text{where}\\
& {\cal S} =\left\{\alpha=(\alpha_1,\cdots,\alpha_n) \in \Z_+^N: \sum_{i=1}^n \ati \chi_{\{\alpha_i \neq 0\}} = \alpha\ \mbox{and}\ \sum_{i=1}^n \alpha_i =n\right\}.
\end{split}
\end{equation*}
This for instance is satisfied if $F$ is a homogeneous polynomial of degree $n$ and $\ati = \alpha_{T_j}$ for all $i,j \neq 0$.
\end{remark}

\begin{remark}
A version of Theorem \ref{thm:2.8}, for the special case of a quadratic nonlinearity, was established in  \cite{biswas}. In contrast to the set up of the real space here, the norms on the initial data space there were defined in the Fourier space. This enables one to completely avoid the detailed harmonic analysis machinery used in the proof here. However, due to the Hausdorff-Young inequality, even restricted to the quadratic nonlinearity case, our consideration here yields a larger space of initial data in several applications. As we will see later, due to this, we can obtain  decay of   $L^p$ ($p>1$) based higher Sobolev norms (for instance for the Navier-Stokes equations) not available in \cite{biswas}.
\end{remark}

\subsection{Analytic Nonlinearity}
In this section, we consider the more general case of an analytic nonlinearity. Let 
\[
F(z)= \displaystyle\sum_{k \in \Z_+^n}a_k z^k
\]
be a real analytic function in a neighborhood of the origin. Here $z=(z_1,\cdots,z_n) \in \R^n$ and we employ the multi-index convention $z^k = z_1^{k_1}\cdots z_n^{k_n}$ for $k=(k_1,\cdots,k_n)$.  The ``majorizing function" for $F$ is defined to be
\eqn \label{eq:4.1}
F_M(r) = \sum_{k\in \Z_+^d}|a_k|r^{|k|}, \quad |k|=k_1 + \cdots +k_d, \ \ r<\infty.
\een
The functions $F$ and $F_M$ are clearly analytic in the open balls (in $\R^d$ and $\R$ respectively) with  center zero and radius
\eqn \label{eq:4.2}
R_M= \sup \left\{r:F_M(r) < \infty\right\}.
\een
We assume that the set in the right hand side of (\ref{eq:4.2}) is nonempty. The derivative of the function $F_M$, denoted by $F_M'$, is also analytic in the ball of radius $R_M$. We now consider the nonlinearity $G$  of the type
\eqn \label{eq:4.3}
G(u) = T_0F \left(T_1 u, \cdots,T_n u \right),
\een
where $T_i$ are as defined in (\ref{eq:2.1}), and the inhomogeneous Sobolev space
\[
\h^{\alpha}_p = \left\{f:\R^{d} \ra \R^{d_1}: \|f\|_{\h^{\alpha}_p}:=\|(I+\Lambda)^{\alpha}f\|_{L^p} < \infty \right\}.
\]
We recall that from the standard Sobolev inequalities and (\ref{eq:2.7}),  we have
\eqn \label{eq:4.4}
\quad \quad \|f\|_{L^{\infty}} \le \|f\|_{\h^{\beta}_p}, \quad \|fg\|_{\h^{\beta}_p} \le
C\|f\|_{\h^{\beta}_p}\|g\|_{\h^{\beta}_p} \quad \text{for} \hspace{0.2cm}\beta > \frac{d}{p}, \hspace{0.2cm}1<p < \infty.
\een
The corresponding Gevrey norm  is defined as
\eqn \label{eq:4.5}
\|v\|_{Gv(s,\beta,p)} =  \left\|e^{\frac{1}{2}s^{1/\kappa}\Lambda_1}(1+\Lambda)^{\beta} v \right\|_{L^p}.
\een
We will show that the Gevrey space $Gv(s,\beta,p)$ with $\beta>\frac{d}{p}$ is a Banach algebra in Lemma \ref{lem:4.1}. We are now ready to state our main result concerning analytic nonlinearity.

\begin{theorem}   \label{thm:4.3}
Let $\beta > \frac{d}{p}$ and assume that $\frac{\alpha_{T_0}+ \alpha }{\kappa} < 1$, where $\alpha := \displaystyle \max_{1 \leq i \leq n}\{\ati\} $. Let $ \|u_0\|_{\h^{\beta+\alpha}_p} < R$. Assume that $2RC < R_M$ where $C$ as in Lemma \ref{lem:4.2} and $R_M$ as in (\ref{eq:4.2}).  There exists a time $T>0$ and a solution $u$ of (\ref{eq:1.2}) such that
\[
\sup_{0<t<T} \left\|u(t)\right\|_{Gv(t,\beta,p)} < \infty.
\]
\end{theorem}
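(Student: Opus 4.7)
The proof is a Fujita--Kato contraction argument for the map $S$ in (\ref{eq:1.2}), in a Gevrey path space modeled on the two weighted norms of Theorem \ref{thm:2.8}. The plan is to find a fixed point of $S$ in the ball $\overline{B}_{2R}(0)\subset X_T$, where
\[
X_T := \Big\{u\in C([0,T];\h^{\beta+\alpha}_p):\ \|u\|_{X_T}:=\max\Big\{\sup_{0<t<T}\|u(t)\|_{Gv(t,\beta,p)},\ \sup_{0<t<T}t^{\alpha/\kappa}\|u(t)\|_{Gv(t,\beta+\alpha,p)}\Big\}<\infty\Big\}.
\]
The higher-regularity weight with factor $t^{\alpha/\kappa}$ is built in so that every $T_i u$, which lives at regularity $\beta$ after losing $\alpha_{T_i}\le\alpha$ derivatives, can be fed into the Banach algebra $Gv(t,\beta,p)$ of Lemma \ref{lem:4.1}.

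For the linear part I would use the elementary bound $\tfrac12 t^{1/\kappa}|\xi|_1\le\tfrac12 t|\xi|^\kappa+C$ (Young's inequality, $C$ independent of $t$) to deduce uniform $L^p$-boundedness of $e^{\frac{1}{2}t^{1/\kappa}\Lambda_1-t\Lambda^\kappa/2}$, and combine it with the standard smoothing of $\Lambda^\alpha e^{-t\Lambda^\kappa/2}$ to obtain
\[
\|e^{-t\Lambda^\kappa}u_0\|_{Gv(t,\beta,p)}\le C\|u_0\|_{\h^\beta_p},\qquad t^{\alpha/\kappa}\|e^{-t\Lambda^\kappa}u_0\|_{Gv(t,\beta+\alpha,p)}\le C\|u_0\|_{\h^{\beta+\alpha}_p}.
\]
For the nonlinear part, each $T_i$ commutes with $\Lambda$ (hence with $\Lambda_1$ and with $e^{\frac{1}{2}s^{1/\kappa}\Lambda_1}$), so it extends to a bounded map $Gv(s,\beta+\alpha_{T_i},p)\to Gv(s,\beta,p)$ with the operator norm from (\ref{eq:2.1}). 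Applying Lemma \ref{lem:4.1} term by term to the power series of $F$ yields
\[
\|F(T_1 u,\ldots,T_n u)(s)\|_{Gv(s,\beta,p)}\le F_M\!\left(C_1\|u(s)\|_{Gv(s,\beta+\alpha,p)}\right),
\]
which is the packaging of Lemma \ref{lem:4.2} and the source of the constant $C$ in the hypothesis $2RC<R_M$. Composing with $T_0$ costs $\alpha_{T_0}$ derivatives which the semigroup recovers via $\|e^{-(t-s)\Lambda^\kappa}w\|_{Gv(t,\gamma,p)}\le C(t-s)^{-(\gamma-\gamma')/\kappa}\|w\|_{Gv(s,\gamma',p)}$. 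Feeding this into the Duhamel integral and using $\|u(s)\|_{Gv(s,\beta+\alpha,p)}\le s^{-\alpha/\kappa}\|u\|_{X_T}$ bounds the nonlinear part of $Su$ in the $Gv(t,\beta+\alpha,p)$ norm by
\[
C t^{-\alpha/\kappa}\int_0^t (t-s)^{-(\alpha_{T_0}+\alpha)/\kappa}\,F_M\!\left(C_1 s^{-\alpha/\kappa}\|u\|_{X_T}\right)ds,
\]
with an analogous bound of exponent $\alpha_{T_0}/\kappa$ for the $Gv(t,\beta,p)$ component.

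The hypothesis $(\alpha_{T_0}+\alpha)/\kappa<1$ is precisely what makes these beta-type integrals convergent, and $2RC<R_M$ keeps the argument of $F_M$ strictly inside its radius of convergence whenever $\|u\|_{X_T}\le 2R$ (the singular $s^{-\alpha/\kappa}$ is absorbed because $T$ is chosen small). This yields $S(\overline{B}_{2R})\subseteq\overline{B}_{2R}$. The contraction step applies the same majorant-series bookkeeping to
\[
F(Tu)-F(Tv)=\int_0^1 \nabla F\!\left(\tau Tu+(1-\tau)Tv\right)\cdot T(u-v)\,d\tau,
\]
whose magnitude is controlled by $F_M'$, also analytic on $[0,R_M)$, producing the same estimate with an extra factor $\|u-v\|_{X_T}$. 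Shrinking $T$ once more makes $S$ a strict contraction and Banach's fixed-point theorem delivers the solution. The principal obstacle is the packaging step: converting the formal series for $F$ into a quantitative bound that is simultaneously uniform in the Gevrey parameter $s$, in the number of factors in each monomial, and compatible with the singular time integrand. This is precisely why the smallness condition $2RC<R_M$ and the balance $(\alpha_{T_0}+\alpha)/\kappa<1$ are imposed in the statement; once Lemmas \ref{lem:4.1} and \ref{lem:4.2} are in hand, the remainder of the argument is the standard mild-solution scheme with the Gevrey weight absorbed into the semigroup.
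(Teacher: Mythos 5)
There is a genuine gap, and it sits exactly at the point you flag as ``the principal obstacle'' and then wave away. By building the time weight $t^{\alpha/\kappa}$ into your space $X_T$, your nonlinear estimate becomes
\[
C\, t^{-\alpha/\kappa}\int_0^t (t-s)^{-(\alpha_{T_0}+\alpha)/\kappa}\,F_M\!\left(C_1 s^{-\alpha/\kappa}\|u\|_{X_T}\right)ds .
\]
But $F_M$ has finite radius of convergence $R_M$, so for any fixed $u\neq 0$ with $\alpha>0$ the argument $C_1 s^{-\alpha/\kappa}\|u\|_{X_T}$ exceeds $R_M$ for all $s$ near $0$, and $F_M$ of that quantity is $+\infty$; the integral diverges on every interval $(0,T)$. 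Shrinking $T$ does not ``absorb'' the singularity --- the bad region is $s\to 0$, which is present for every $T>0$. This is precisely the structural difference between a degree-$n$ monomial (where the weight contributes a fixed power $s^{-n\alpha/\kappa}$, integrable when $n\beta<\kappa$ as in Theorem \ref{thm:2.8}) and a genuinely analytic $F=\sum a_k z^k$, where the monomials of arbitrarily high degree $|k|$ produce $s^{-|k|\alpha/\kappa}$ and the series cannot be summed against the majorant. The hypothesis $2RC<R_M$ cannot rescue you here because the quantity that must stay below $R_M$ is a \emph{uniform-in-$s$} bound on $\max_i\|T_iu(s)\|_{Gv(s,\beta,p)}$, and your weighted norm only controls $s^{\alpha/\kappa}\|u(s)\|_{Gv(s,\beta+\alpha,p)}$.

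The paper's proof avoids this by not introducing any time weight at all: this is the whole reason Theorem \ref{thm:4.3} measures the initial data in the \emph{higher} space $\h^{\beta+\alpha}_p$ (rather than $\h^{\beta}_p$, as the scaling-critical philosophy of Theorem \ref{thm:2.8} would suggest), and also why no small-data global result is obtained in the whole space. One works in $E=\{u:\sup_{s\in(0,T)}\|u(s)\|_{Gv(s,\beta+\alpha,p)}\le 2R\}$; then $\|T_iu(s)\|_{Gv(s,\beta,p)}\le C\|u(s)\|_{Gv(s,\beta+\alpha,p)}\le 2RC<R_M$ uniformly in $s$, Lemma \ref{lem:4.2} gives $\|F(T_1u,\dots,T_nu)(s)\|_{Gv(s,\beta,p)}\le F_M(2RC)<\infty$, and the only singular factor in Duhamel is the fixed semigroup smoothing $(t-s)^{-(\alpha_{T_0}+\alpha)/\kappa}$ needed to climb from $Gv(s,\beta,p)$ back to $Gv(t,\beta+\alpha,p)$, integrable precisely because $(\alpha_{T_0}+\alpha)/\kappa<1$. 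This yields the self-map and contraction bounds with the factor $T^{1-(\alpha_{T_0}+\alpha)/\kappa}$. Your treatment of the linear term, the algebra Lemma \ref{lem:4.1}, and the $F_M'$ bookkeeping for the difference estimate are all fine; the fix is simply to delete the weighted component of your norm and run the fixed-point argument in the single unweighted $Gv(\cdot,\beta+\alpha,p)$ norm, which your hypothesis $\|u_0\|_{\h^{\beta+\alpha}_p}<R$ already supports.
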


Note that unlike Theorem \ref{thm:2.8}, we do not get a global existence result here even in case of small initial data. However, in the periodic space case, we do obtain a global existence result for small data. We need to assume however that the nonlinearity $G$ has the property that it leaves the space of mean zero periodic functions invariant (this happens for instance if $T_0 = \nabla $). This is due to the fact that in the periodic space, $\Lambda$ has a minimum eigenvalue, denoted by $\lambda_0 >0$, and the Fourier spectrum of all periodic functions with space average zero is contained in the complement of a ball with radius $\lambda_0$. More precisely, we have the following result.

\begin{theorem}  \label{thm:4.5}
Consider the equation (\ref{eq:1.1}) in the periodic space. Assume that the space of mean zero functions are invariant under  $G$ and the Fourier coefficients  of $G$ satisfy 
\[
a_0 =0, \quad \sum_{k \in \Z_+^d, |k|\ge 1}|a_k| < \delta ,
\]
where $\delta \ge 0$ is suitably small. Then, there exists $\epsilon >0$ such that if
$\|u_0\|_{\h^{\beta}_p} < \epsilon$, with $ \beta > \frac{d}{p}$, and $1<p < \infty$, we can obtain an unique solution to (\ref{eq:1.2}) satisfying 
\[
\sup_{0<t< \infty}\|u(t)\|_{Gv(t,\beta,p)} < \infty.
\]
\end{theorem}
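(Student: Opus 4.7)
The plan is to run a Banach contraction argument for the integral operator $S$ of \eqref{eq:1.2} on the complete metric space
\[
X = \Bigl\{ u \in C([0,\infty); \h^{\beta}_p) : \|u\|_{X} := \sup_{t > 0} \|u(t)\|_{Gv(t,\beta,p)} \le 2 C_0 \|u_0\|_{\h^{\beta}_p} \Bigr\},
\]
with the distance inherited from $\|\cdot\|_{X}$. The key structural advantage over Theorem \ref{thm:4.3} is that on the torus the restriction of $\Lambda$ to the invariant subspace of mean-zero functions has spectrum bounded below by the first nontrivial eigenvalue $\lambda_0 > 0$. The hypotheses that $a_0 = 0$ and that $G$ preserves the mean-zero subspace ensure that if $u_0$ is mean-zero then so is every iterate $S^k u_0$, so this spectral gap is available throughout the iteration (the general case reduces to this by a trivial shift of the constant mode).

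First, I would establish a global-in-time bound for the linear part,
\[
\sup_{t > 0} \left\| e^{-t\Lambda^{\kappa}} u_0 \right\|_{Gv(t,\beta,p)} \le C_0 \|u_0\|_{\h^{\beta}_p}.
\]
On mean-zero periodic functions, the Fourier multiplier $e^{\frac{1}{2} t^{1/\kappa}|\xi|_1}(1+|\xi|)^{\beta} e^{-t|\xi|^{\kappa}}$ is effectively supported on $\{|\xi| \ge \lambda_0\}$. Splitting $e^{-t\Lambda^\kappa} = e^{-t\Lambda^\kappa/2}\cdot e^{-t\Lambda^\kappa/2}$ and substituting $y = t^{1/\kappa}\xi$ shows that the exponent $\tfrac{1}{2} t^{1/\kappa}|\xi|_1 - \tfrac{1}{2} t|\xi|^{\kappa}$ is uniformly bounded above in $(\xi,t)$, while the remaining factor $e^{-\tfrac{1}{2} t|\xi|^{\kappa}}(1+|\xi|)^\beta$ contributes a uniform constant together with an exponential decay $e^{-\tfrac{1}{2} t\lambda_0^\kappa}$. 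A Mikhlin--H\"ormander type argument then yields a $t$-uniform $L^p \to L^p$ bound for the combined multiplier.

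Next, I would bound the Duhamel term. By Lemma \ref{lem:4.1} the space $Gv(s,\beta,p)$ is a Banach algebra for $\beta > d/p$, so iterating the product inequality together with the boundedness of the $T_i$'s in \eqref{eq:2.1} yields
\[
\|G(u(s))\|_{Gv(s,\beta,p)} \le C\, F_M\bigl( C_T \|u(s)\|_{Gv(s,\beta,p)} \bigr),
\]
provided $C_T \|u\|_X < R_M$. Combining this with the uniform linear estimate of the previous step applied to the time-shifted semigroup, and using the decay $e^{-\lambda_0^\kappa (t-s)/2}$ furnished by the spectral gap, gives
\[
\left\| \int_0^t e^{-(t-s)\Lambda^{\kappa}} G(u(s))\, ds \right\|_{Gv(t,\beta,p)}
\le C_1 \!\int_0^t e^{-\lambda_0^{\kappa}(t-s)/2}\, ds\; F_M(C_T \|u\|_X) \le C_2\, F_M(C_T \|u\|_X),
\]
uniformly in $t \ge 0$.

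Finally, the smallness hypothesis $\sum_{|k| \ge 1}|a_k| < \delta$ forces $F_M(r) = O(\delta r)$ near $r = 0$, so choosing $\epsilon$ and $\delta$ small enough arranges both
\[
C_0 \epsilon + C_2\, F_M(2 C_T C_0 \epsilon) \le 2 C_0 \epsilon \quad \text{and} \quad C_2\, F_M'(2 C_T C_0 \epsilon) < 1,
\]
which give self-map and strict contractivity of $S$ on $X$; the Banach fixed-point theorem then produces the unique global solution, and continuity in $t$ follows in standard fashion. The main obstacle is the first step: the uniform-in-$t$ bound for the Gevrey linear semigroup fails on $\R^d$ (which is why Theorem \ref{thm:4.3} is only local), and its success here rests entirely on exploiting the spectral gap $\lambda_0 > 0$ of $\Lambda$ on mean-zero periodic functions.
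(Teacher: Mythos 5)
Your overall architecture coincides with the paper's: a contraction argument in a ball of $\sup_{t>0}\|\cdot\|_{Gv(t,\cdot,p)}$, with globality coming from the spectral gap $\lambda_0>0$ of $\Lambda$ on mean-zero periodic functions, which inserts a factor $e^{-\frac12\lambda_0^\kappa(t-s)}$ into the Duhamel integrand and makes the time integral uniformly bounded in $t$; the smallness of $\delta$ and of the data then gives $F_M(2RC)\to 0$ and $\limsup_{R\to 0} F_M'(2RC)\le \delta$, closing the fixed point. So the route is not genuinely different from the paper's.

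There is, however, a gap in your nonlinear estimate. You assert
\[
\|G(u(s))\|_{Gv(s,\beta,p)} \le C\,F_M\bigl(C_T\|u(s)\|_{Gv(s,\beta,p)}\bigr)
\]
from the Banach algebra property (Lemma \ref{lem:4.1}) together with ``the boundedness of the $T_i$'s.'' But by (\ref{eq:2.1}) each $T_i$ is bounded from $\hh^{\beta+\ati}_p$ to $\hh^{\beta}_p$, i.e.\ it \emph{loses} $\ati$ derivatives; it is not bounded on $\h^{\beta}_p$ unless $\ati=0$ (in the intended applications this loss is real, e.g.\ $T_0=\Delta$ with $\alpha_{T_0}=2$ for the generalized Cahn--Hilliard equation). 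The paper compensates by using the smoothing of the semigroup, so that the Duhamel integrand is bounded as in (\ref{eq:4.8}) by
\[
\frac{C\,e^{-\frac12\lambda_0^\kappa(t-s)}}{(t-s)^{(\alpha_{T_0}+\alpha)/\kappa}}\,F_M(2RC),
\]
which is exactly why the hypothesis $(\alpha_{T_0}+\alpha)/\kappa<1$ enters and why the relevant elementary fact is $\sup_{t>0}\int_0^t e^{-b(t-s)}(t-s)^{-a}\,ds<\infty$ for $0<a<1$, not merely the boundedness of $\int_0^t e^{-b(t-s)}\,ds$. As written, your Duhamel bound omits the singular factor and only covers the case where every $\ati$ vanishes. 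The repair is routine (measure the solution in $Gv(s,\beta+\alpha,p)$, apply Lemma \ref{lem:4.2}, and retain the $(t-s)^{-(\alpha_{T_0}+\alpha)/\kappa}$), but it must appear. A separate, minor point: your closing claim that the uniform-in-$t$ bound for the linear Gevrey semigroup ``fails on $\R^d$'' is not accurate --- Lemmas \ref{lem:2.4} and \ref{lem:2.5} give precisely that bound on the whole space; what fails without the spectral gap is the uniform-in-$t$ control of the Duhamel term, since $\int_0^t(t-s)^{-(\alpha_{T_0}+\alpha)/\kappa}\,ds$ grows with $t$.
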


\subsection{Equation in Fourier space}
In Section 2.2, we worked under the restriction $1<p<\infty$. In this section, we consider the case $p= \infty $. The harmonic analysis tools used in the previous sections do not work here  since the singular integrals are not bounded in $L^\infty $. We therefore resort to working in the frequency space using the Fourier transform. Recall that if the Fourier transform of a distribution  is in $L^{1}$ in the Fourier spaces, then it is an $L^{\infty}$ function. The development here is in the spirit of \cite{biswas2007gevrey} and  \cite{biswas2010navier}. The other borderline case of $p=1$ (in space variables) is similar and discussed in Remark \ref{rem:otherborder}.

We denote by $\cf $ the Fourier transform (in the space variables) and by $\cf^{-1}$ its inverse. By a notational abuse, letting $u = \cf(u)$, we can reformulate (\ref{eq:1.1}) as
\eqn \label{eq:4.10}
u_t(\xi,t) + |\xi|^{\kappa} u(\xi,t) = G\left(u(\cdot,t)\right)(\xi), \quad u(\xi,0)=u_0(\xi),
\een
where $G$ is as in (\ref{eq:4.3}). Recall that the Fourier transform converts products in real space to convolutions in the frequency space and  the analytic function $F$ in (\ref{eq:4.1}) takes the form
\[
F(v)= \displaystyle\sum_n a_n v_1^{\ast n_1}\cdots v_d^{\ast n_d}, \quad v=\left(v_1,\cdots,v_d\right)
\]
where $\ast$ denotes convolution. Denoting by $\fd$ the multiplication operator $(\fd v)(\xi) = |\xi|^{\kappa}v(\xi)$, we can write the mild formulation of (\ref{eq:1.1}) as
\eqn \label{eq:4.11}
u(t) = e^{-t\fd}u_0 + \int_{0}^{t} e^{-(t-s)\fd}G\left(u(s)\right) ds.
\een
In this section, we denote $\|\cdot\|$ the $L^1$ norm in the Fourier space, i.e.,
\[
\|v\|=\int_{\R^{d}} \left|v(\xi)\right|d\xi.
\]
We also recall that $L^1$ is a Banach algebra under convolution such as
\[
\|u \ast v\| \le \|u\|\|v\|.
\]
For $s \ge 0$ and $\beta \in \R$, we now introduce the Gevrey norms as
\eqn \label{eq:4.12}
\|v\|_{Gv(s,\beta)} = \int_{\R^{d}} e^{\frac{1}{2}s^{1/\kappa}|\xi|}|\xi|^{\beta}|v(\xi)|d\xi.
\een
For notational simplicity, we suppress the dependence of the Gevrey norm on $\kappa$ (since it is fixed), and  we denote $\|v\|_{Gv(0,\beta)} = \|v\|_{\beta}$ when $s=0$ and we write $\|v\|_{Gv(s,0)}=\|v\|_{Gv(s)}$ when $\beta =0$. Also, denote
\[
\bbv_\beta = \left\{v: \|v\|_{\beta} < \infty \right\}.
\]
When $\beta =0$, we simply write $\bbv=\bbv_0$. For $u=(u_1,\cdots,u_{d}), v=(v_1,\cdots,v_{d})$ vector valued functions, we denote
\[
u \ast v = \Big(\sum_{ij} b_{ijk}u_i \ast v_j \Big)_{k=1,2, \cdots, d}, \quad b_{ijk} \in \R.
\]
It is easy to see, applying the Cauchy-Schwartz inequality,  that 
\[
\left|(u \ast v)(\xi)\right| \le C \left(|u| \ast |v|\right)(\xi), \quad C= \max_{i,j,k}|b_{ijk}|.
\]
We now state the existence of a solution to (\ref{eq:4.10}) with respect in the Gevrey spaces (\ref{eq:4.12}).

\begin{theorem}  \label{thm:4.10}
Let $\alpha=\displaystyle\max_{1\leq i\leq n} \{\ati\}$ and assume that
\[
\frac{\alpha_{T_0}+\alpha }{\kappa} < 1, \quad \max\{ \|u_0\|, \|u_0\|_{\alpha} \}< R, \quad 2RC < R_M,
\]
where $C$ is as in Lemma \ref{lem:4.8} and $R_M$ as in (\ref{eq:4.2}).  There exists a time $T>0$ and a solution $u$ of (\ref{eq:4.10}) such that 
\eqn  \label{globalgevrey}
\sup_{0<t<T}\|u(t)\|_{Gv(t)} < \infty.
\een 
Additionally, suppose that we consider the problem in the periodic space and the nonlinearity $G$ leaves the space of functions with zero space average invariant. Then, if we consider (\ref{eq:4.10}) on that subspace, there exists $\epsilon >0$ such that we can take $T=\infty $ in (\ref{globalgevrey}) provided $ \max\{ \|u_0\|, \|u_0\|_{\alpha} \}< \epsilon$.
\end{theorem}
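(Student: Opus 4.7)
The plan is to run a fixed-point argument for the map $S$ defined by (\ref{eq:4.11}) in the Banach space
$$X_T = \left\{u \in C([0,T];\bbv_\alpha) : \|u\|_{X_T} < \infty\right\}, \quad \|u\|_{X_T} := \sup_{0<t<T}\max\left\{\|u(t)\|_{Gv(t)},\, \|u(t)\|_{Gv(t,\alpha)}\right\},$$
showing that $S$ maps the closed ball $B_{2R} = \{u \in X_T : \|u\|_{X_T} \leq 2R\}$ into itself and is a contraction for $T$ chosen small enough. Since the hypothesis $\max\{\|u_0\|,\|u_0\|_\alpha\} < R$ already provides initial control of both norms, no time-weighted norm (as in Theorem \ref{thm:2.8}) is needed.

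First I would bound the linear semigroup via the pointwise identity
$$\|e^{-t\fd}u_0\|_{Gv(t,\beta)} = \int_{\R^d} e^{\frac{1}{2}t^{1/\kappa}|\xi|-t|\xi|^\kappa}|\xi|^\beta |u_0(\xi)|\,d\xi,$$
noting that for $\kappa > 1$ the function $y^\beta e^{\frac{1}{2}y-y^\kappa}$ is bounded on $[0,\infty)$ (with $y = t^{1/\kappa}|\xi|$), which yields $\|e^{-t\fd}u_0\|_{Gv(t,\beta)} \leq C\|u_0\|_\beta$ for $\beta \in \{0,\alpha\}$. Using the subadditivity $t^{1/\kappa}-s^{1/\kappa} \leq (t-s)^{1/\kappa}$ for $0 \leq s \leq t$, I would derive the Gevrey semigroup transfer estimate
$$\|e^{-(t-s)\fd}f\|_{Gv(t,\beta)} \leq \frac{C}{(t-s)^{\beta/\kappa}}\|f\|_{Gv(s)}, \qquad 0 \leq s < t,$$
by splitting $e^{-(t-s)|\xi|^\kappa}$ into two halves: one half absorbs the derivative loss $|\xi|^\beta$ via $\sup_\xi |\xi|^\beta e^{-(t-s)|\xi|^\kappa/2} \leq C(t-s)^{-\beta/\kappa}$, while the other neutralises the extra Gevrey weight $e^{\frac{1}{2}(t^{1/\kappa}-s^{1/\kappa})|\xi|} \leq e^{\frac{1}{2}(t-s)^{1/\kappa}|\xi|}$ picked up when shifting the Gevrey weight from $t$ to $s$.

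For the nonlinear term I would use that the convolution Banach-algebra property $\|u\ast v\| \leq \|u\|\|v\|$ lifts to the Gevrey spaces, because $e^{\frac{1}{2}s^{1/\kappa}|\xi|} \leq e^{\frac{1}{2}s^{1/\kappa}|\eta|}e^{\frac{1}{2}s^{1/\kappa}|\xi-\eta|}$ by the triangle inequality. Applying this term-by-term to the power series defining $F$, Lemma \ref{lem:4.8} yields a bound of the form
$$\|F(T_1 u(s),\dots,T_n u(s))\|_{Gv(s)} \leq F_M\bigl(C\|u(s)\|_{Gv(s,\alpha)}\bigr),$$
with the $\alpha = \max_i \alpha_{T_i}$ accounting for the worst-case derivative loss in each $T_i$. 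Combining with the transfer estimate (the factor $|\xi|^{\alpha_{T_0}}$ from $T_0$ raises the exponent by $\alpha_{T_0}$), one obtains
$$\left\|\int_0^t e^{-(t-s)\fd}G(u(s))\,ds\right\|_{Gv(t,\gamma)} \leq C\int_0^t (t-s)^{-(\alpha_{T_0}+\gamma)/\kappa}F_M\bigl(C\|u(s)\|_{Gv(s,\alpha)}\bigr)\,ds, \quad \gamma \in \{0,\alpha\}.$$
The assumption $(\alpha_{T_0}+\alpha)/\kappa < 1$ is used precisely to guarantee integrability at $s=t$, while $2RC < R_M$ keeps $F_M$ finite on $B_{2R}$. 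The resulting bound is dominated by $\widetilde C\,t^{1-(\alpha_{T_0}+\alpha)/\kappa}F_M(2RC)$, which is at most $R$ for $T$ small enough. A parallel computation for $\|Su-Sv\|_{X_T}$, using the analyticity of $F_M'$ on $(-R_M,R_M)$ to yield a Lipschitz bound on $F$ over the ball, gives the contraction.

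Finally, in the periodic case with $G$ preserving mean-zero functions, I would exploit that the Fourier spectrum of any iterate lies in $\{|\xi| \geq \lambda_0\}$, giving $e^{-(t-s)|\xi|^\kappa/2} \leq e^{-(t-s)\lambda_0^\kappa/2}$ and hence
$$\int_0^\infty \tau^{-(\alpha_{T_0}+\alpha)/\kappa}e^{-\tau\lambda_0^\kappa/2}\,d\tau < \infty.$$
Thus, by choosing $\epsilon$ small enough, the same fixed-point argument closes uniformly on $[0,\infty)$. The main obstacle will be the bookkeeping in the Gevrey transfer estimate, matching the derivative powers produced by $T_0$ and the $T_i$, the power $\gamma$ retained in the left-hand Gevrey norm, and the heat-kernel smoothing factor, while keeping the total exponent below $\kappa$; the condition $(\alpha_{T_0}+\alpha)/\kappa < 1$ is sharp, so no slack is available.
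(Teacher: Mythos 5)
Your proposal is correct and follows essentially the same route as the paper: the same fixed-point space built from $\sup_t\max\{\|u(t)\|_{Gv(t)},\|u(t)\|_{Gv(t,\alpha)}\}$, the same use of $t^{1/\kappa}\le s^{1/\kappa}+(t-s)^{1/\kappa}$ and the boundedness of $y^{\beta}e^{\frac12 y-y^{\kappa}}$ to transfer the Gevrey weight and absorb the $|\xi|^{\alpha_{T_0}}$ loss, the same convolution Banach-algebra and $F_M$-majorization lemmas for the nonlinearity, and the same spectral-gap argument via $e^{-\lambda_0^{\kappa}(t-s)/2}$ for the global periodic case. No substantive differences.
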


\begin{remark}  \label{rem:otherborder}
In this section, we worked with $L^1$ norm of the Fourier transform of the function. Due to the Hausdorff-Young inequality, this corresponds to the $L^\infty $ norm in the space variable. However, the other borderline case of Theorem \ref{thm:4.3} occurs when we have $L^1 $ norm in the space variable. In the frequency space, this corresponds to the $L^\infty$ norm (in the sense that $\|{\cal F}(u)\|_{L^{\infty}} \le \|u\|_{L^1}$). We can obtain an analogue of Theorem \ref{thm:4.10} if instead of the $L^1$ norm, we let
\[
|\|u\||= \sup_{\xi \in \R^d} \left(1+|\xi| \right)^{\alpha_0}|u(\xi)|, \ \ \alpha_0 > d.
\]
The conditions with $\max\{ \|u_0\|, \|u_0\|_{\alpha} \}$ in Theorem \ref{thm:4.5}  are replaced with analogous conditions with $|\|u_0\||_{\alpha + \alpha_0}$. The proof of this fact is very similar to the proof of Theorem \ref{thm:4.10} once one notes that like the $L^1$ space, the Gevrey space based  on this norm is also a Banach algebra under convolution (see \cite{biswas2010navier}).
\end{remark}

\begin{remark}[Exponential decay of Fourier coefficients]
In the periodic  space setting (with period $L$),  the operator $\Lambda$ restricted to the subspace of functions with space average zero, has a discrete spectrum with its lowest eigenvalue being $\frac{2\pi}{L}$. Then, provided $ \max\{ \|u_0\|, \|u_0\|_{\alpha} \}$ in small enough (or in view of Remark \ref{rem:otherborder}, if $|\|u_0\||_{\alpha_0+\alpha}$ is small enough), we have the exponential decay of the Fourier coefficients
\[
\left|{\cal F}(u)(k,t) \right| \le Ce^{-t^{1/\kappa}|k|}, \quad k\in \mathbb{Z}^{3}_{+}. 
\]
This is a generalization of the results in \cite{foias} and \cite{dt} for the special case of the
3d Navier-Stokes equations. In fact, for the 3d Navier-Stokes equations, following similar techniques as is presented here, one can sharpen this decay result (see \cite{biswas2007gevrey}) by demanding only that
\[
\sup_{k \in \Z_+^3} |k|^2|{\cal F}(u_0)(k)| < \epsilon .
\]
This fact was proven in \cite{biswas2007gevrey} using similar methods, and independently in \cite{Sinai}.
\end{remark}

\section{APPLICATIONS: DECAY OF SOBOLEV NORMS} \label{sec:3}

Theorem \ref{thm:2.8} tells us that if the initial data are small in  critical spaces  $\hh^{\beta_c}_p$, the solutions are globally in the Gevrey class. Due to Lemma \ref{lem:2.1} (or Lemma \ref{lem:2.2}), this allows us to obtain the following time decay of homogeneous Sobolev norms: for $\zeta >\beta_c$,
\begin{gather} \label{gevdecay}
\begin{split}
\left\|\Lambda^{\zeta} u(t)\right\|_{L^{p}} &=\left\|\Lambda^{\zeta -\beta_c}e^{-ct^{1/\kappa}\Lambda_1} e^{ct^{1/\kappa}\Lambda_1}\Lambda^{\beta_c} u(t) \right\|_{L^{p}} \\
&\le C_{\zeta}  t^{-\frac{1}{\kappa}(\zeta-\beta_{c})} \left\|u(t)\right\|_{Gv(t,\beta_c,p)}, \ \ C_\zeta \sim C^\zeta \zeta^\zeta.
\end{split}
\end{gather}
On the other hand, if we can show that a solution $u$ of (\ref{eq:1.1}) satisfies
\eqn   \label{eq:3.1}
\liminf_{t\ra \infty} \left\|u(t)\right\|_{\hh^{\beta_c}_p} =0,
\een
then due to Theorem \ref{thm:2.8},  after a certain transient time $t_{0}$, we have
\[
\sup_{t > t_0} \left\|u(t)\right\|_{Gv(t-t_0,\beta_c,p)} < \infty .
\]
Consequently, we obtain
\eqn  \label{eq:3.2}
\left\|\Lambda^{\zeta} u(t) \right\|_{L^{p}} \leq C_{\zeta} \left\|u(t_{0})\right\|_{\hh^{\beta_0}_p} (t-t_{0})^{-\frac{1}{\kappa}(\zeta-\beta_{c})},  \quad \zeta>\beta_{c}
\een
where $\|u(t_{0})\|_{\hh^{\beta_c}_p}$ is sufficiently small to apply Theorem \ref{thm:2.8}. In the next sections, we will provide several examples where this can be achieved.

\subsection{Navier-Stokes equations}
The first application of Theorem \ref{thm:2.8} is the three dimensional Navier-Stokes equations: 
\begin{subequations} \label{ns}
\begin{align}
& u_{t} +u\cdot \nabla u -\mu\Delta u+\nabla p=0, \\
& \nabla \cdot u=0.
\end{align}
\end{subequations}
The critical space for (\ref{ns}) is $\hh^{\beta_{c}}$ with $\beta_{c}=\frac{3}{p}-1$. Compared with previous works by \cite{schon1985, schon1991, schon1995, oliver2000remark, schonwieg, Miyakawa} and others, where decay of $L^{2}$-based Sobolev norms have been achieved, we provide decay of $L^{p}$-based Sobolev norms.

\begin{theorem} \label{thm:3.1}
Let $1 < p < \infty$ and  $u$ be a weak solution of the three dimensional Navier-Stokes equations with $u_0 \in L^2$. In case  $1 < p < 2$, we additionally assume that $\omega_0 = \nabla \times u_0 \in L^1$. Let $\beta_c = \frac{3}{p}-1$.  Then, we have the following decay estimate:
\eqn   \label{eq:3.3}
\left\|\Lambda^{\zeta} u(t) \right\|_{L^p} \le C_{\zeta} \left\|u(t_{0})\right\|_{\hh^{\beta_c}_p} \left(t-t_0\right)^{-\frac{1}{2}(\zeta -\beta_c)}, \quad \zeta> \max \left\{0, \beta_{c}\right\},
\een
where $C_\zeta \sim C^\zeta\zeta^\zeta$ for $p=2$ and $\left\|u(t_{0})\right\|_{\hh^{\beta_c}_p}$ is sufficiently small to apply Theorem \ref{thm:2.8}.
\end{theorem}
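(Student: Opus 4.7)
My plan follows the template of (\ref{gevdecay})--(\ref{eq:3.2}): find a transient time $t_0$ at which $\|u(t_0)\|_{\hh^{\beta_c}_p}$ is small enough to invoke the small-data global-in-time part of Theorem \ref{thm:2.8}, so that (\ref{eq:3.2}) with $\kappa=2$ delivers (\ref{eq:3.3}). Uniqueness of small strong solutions (Gevrey ones in particular) then identifies the Gevrey solution with the given weak solution on $[t_0,\infty)$. The applicability of Theorem \ref{thm:2.8} to the NSE is a direct check: writing $G(u)=-\p(u\cdot\nabla u)$, this matches (\ref{eq:2.2}) with $\kappa=2$, $n=2$, $T_0=\p\nabla$ (so $\alpha_{T_0}=1$), $T_1=T_2=I$ (so $\alpha_{T_i}=0$ for $i=1,2$), and $F(z_1,z_2)=z_1z_2$. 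Condition (\ref{eq:2.13}) then reduces to $1\le 2$ and $0>\max\{-3/(2p),-1\}$, both valid for every $1<p<\infty$, and the critical index works out to $\beta_c=3/p-1$ as claimed.

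The substantive task is to show
\[
\liminf_{t\to\infty}\|u(t)\|_{\hh^{\beta_c}_p}=0.
\]
For $p\ge 2$ (so $\beta_c\le 1/2$) this is quick. The Leray--Hopf energy inequality yields $\int_0^{\infty}\|\nabla u\|_{L^{2}}^{2}\,ds<\infty$ and hence $\|\nabla u(t_n)\|_{L^{2}}\to 0$ along some $t_n\to\infty$. Combined with the uniform $L^2$ bound on $u$ and the interpolation $\|v\|_{\hh^{1/2}_{2}}^{2}\le\|v\|_{L^{2}}\|\nabla v\|_{L^{2}}$, this gives $\|u(t_n)\|_{\hh^{1/2}_{2}}\to 0$. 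The Sobolev embedding $\hh^{1/2}_{2}(\R^{3})\hookrightarrow\hh^{\beta_c}_p(\R^{3})$, valid for every $p\ge 2$ via the scaling identity $1/2-\beta_c=3(1/2-1/p)$, closes this range.

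The case $1<p<2$ ($\beta_c>1/2$) is the principal obstacle and is where the extra hypothesis $\omega_0\in L^{1}$ enters. For $p\in[3/2,2)$ I would use the Biot--Savart representation $u=\nabla\times(-\Delta)^{-1}\omega$ together with Hardy--Littlewood--Sobolev to obtain $\|u\|_{\hh^{\beta_c}_p}\le C\|\omega\|_{L^{3/2}}$ (the HLS exponent matches precisely because $3(1/(3/2)-1/p)=2-3/p=1-\beta_c$), and then employ the Lebesgue interpolation $\|\omega\|_{L^{3/2}}\le\|\omega\|_{L^{1}}^{1/3}\|\nabla u\|_{L^{2}}^{2/3}$ (using $\|\omega\|_{L^{2}}=\|\nabla u\|_{L^{2}}$ for divergence-free $u$ in $\R^{3}$). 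An a-priori bound on $\|\omega(t)\|_{L^{1}}$ coming from $\omega_0\in L^{1}$ and the vorticity equation, combined with $\liminf\|\nabla u(t)\|_{L^{2}}=0$, then forces $\liminf\|u(t)\|_{\hh^{\beta_c}_p}=0$. The subrange $1<p<3/2$ (where $\beta_c>1$) requires an additional Gagliardo--Nirenberg interpolation bringing in higher-regularity estimates of $u$ on dissipative time intervals. Securing the $L^{1}$-vorticity control in three dimensions and closing this lower subrange is the main obstacle, and is precisely where the Gevrey shortcut supplied by Theorem \ref{thm:2.8} substitutes for the heavier machinery of \cite{biswas2012}.
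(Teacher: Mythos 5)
Your reduction to (\ref{eq:3.1}), your verification that the Navier--Stokes nonlinearity fits the hypotheses of Theorem \ref{thm:2.8}, and your treatment of the ranges $p\ge 2$ and $3/2\le p<2$ all match the paper's proof. The only cosmetic difference in the latter range is that you pass from $\omega\in L^{3/2}$ to $u\in\hh^{\beta_c}_p$ directly by Hardy--Littlewood--Sobolev, whereas the paper first notes $\|u\|_{\hh^1_{3/2}}\sim\|\omega\|_{L^{3/2}}$ via Calder\'on--Zygmund (see (\ref{eq:3.7})) and then applies the Sobolev inequality (\ref{eq:2.10}); these are equivalent. The uniform $L^1$ vorticity bound you invoke is exactly (\ref{eq:3.5}), which the paper takes from \cite{cf}.

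The genuine gap is the subrange $1<p<3/2$, which you explicitly leave open as ``the main obstacle.'' The missing idea is a bootstrap off the already-established $p=2$ case. Once (\ref{eq:3.3}) holds for $p=2$, one has $\lim_{t\ra\infty}\|u(t)\|_{\hh^{1+\zeta}_2}=0$ and hence, by (\ref{eq:3.7}), $\lim_{t\ra\infty}\|\omega(t)\|_{\hh^{\zeta}_2}=0$ for every $\zeta\ge 0$; this is the source of the ``higher-regularity estimates on dissipative time intervals'' you allude to, and it costs nothing beyond what has already been proved. Feeding this, together with $\liminf_{t\ra\infty}\|\omega(t)\|_{L^{q_2}}=0$ for $1<q_2\le2$ (from the uniform $L^1$ bound and Lebesgue interpolation, as in (\ref{eq:3.6})), into the fractional Gagliardo--Nirenberg inequality $\|\omega\|_{\hh^{\alpha}_p}\le C\|\omega\|_{\hh^{\alpha_1}_{2}}^{\theta}\|\omega\|_{L^{q_2}}^{1-\theta}$ with $\alpha=\theta\alpha_1$ and $\frac1p=\frac{\theta}{2}+\frac{1-\theta}{q_2}$ (this is (\ref{eq:3.10})) gives $\liminf_{t\ra\infty}\|\omega(t)\|_{\hh^{\beta}_p}=0$ for every $\beta>0$ and $1<p<2$; Calder\'on--Zygmund then yields $\liminf_{t\ra\infty}\|u(t)\|_{\hh^{1+\beta}_p}=0$, and choosing $1+\beta=\beta_c$ (possible since $\beta_c>1$ precisely when $p<3/2$) closes the case. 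Without this bootstrap your argument cannot reach $\beta_c>1$: the energy inequality and the $L^1$ vorticity bound alone only control $\omega$ in $L^q$, $1\le q\le 2$, i.e.\ $u$ up to one derivative. The weak--strong identification of the given weak solution with the Gevrey solution after time $t_0$, which you flag, is used implicitly by the paper as well and is not a defect specific to your proposal.
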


\begin{remark}
Existence of solutions to the (\ref{ns}) in Gevrey classes was first proven for the periodic space case  by Foias and Temam (\cite{ft}) (for initial data in $\h^1$) and subsequently by Oliver and Titi on the whole space with initial data in $\h^{s}, s > n/2, n=2,3$ (see also \cite{LR} for initial data in $\hh^{1/2}$ for (\ref{ns})). By following a slightly different approach, namely interpolating the $L^p$ norms of the solution and its analytic extension, Grujic and Kukavica \cite{gk} proved analyticity of solutions to the (\ref{ns}) for initial data in $L^p, p >3$ (see also \cite{LR1} for a different proof, and \cite{pav} and \cite{guber} for initial data in the larger critical space $BMO^{-1}$). Analyticity of solutions for initial data in homogeneous potential spaces $\hha , 1<p<\infty, \alpha \ge \frac{3}{p}-1$, which includes the above mentioned $L^p$ spaces,  follows from Theorem \ref{thm:2.8}. The decay in $L^2$-based (homogeneous) Sobolev norms for (\ref{ns})  were, to the best of our knowledge, first given in \cite{schon1995} and \cite{schonwieg}. However, the constants $C_{\zeta}$ were not explicitly identified there. The sharp (and optimal, in the sense of providing lower bounds as well) decay results were provided by Oliver and Titi (\cite{oliver2000remark}) following the Gevrey class approach. The constants $C_{\zeta}$ in (\ref{eq:3.3}) is of the same order as in  \cite{oliver2000remark}. Thus, (\ref{eq:3.3}) is a $L^p$ version of the sharp decay result in \cite{oliver2000remark}. In  \cite{schon1995} and \cite{oliver2000remark}, there is also an assumption of the decay of the $L^2$ norm of the solution. This is circumvented  here due to our working in the ``critical" space $\hh^{1/2}$.
\end{remark}

\comments{
In Theorem \ref{thm:3.1} above, we obtained the decay of $\|\Lambda^{\zeta} \bu(t)\|_{L^p}$, with the decay rate given by $(t-t_0)^{-\frac{1}{2}(\zeta -\beta_c)}$ if $ \|\bu(t_{0})\|_{\hh^{\beta_c}_p}$ is sufficiently small. In fact, we can obtain better decay rates for $p<2$ by interpolating the decay of the vorticity and the decay of $\|u\|_{\hh^{l+1}}$ obtained in Theorem \ref{thm:3.1}.

\begin{cor}
We can obtain a better result only in terms of $\|u(t_{0})\|_{\hh^{\frac{1}{2}}}$ as follows. \\
(i) for $2\leq p\leq \infty$,
\[
\|\Lambda^{\zeta} \bu(t)\|_{L^p} \le C_{\zeta} \epsilon (t-t_0)^{-\frac{1}{2}(\zeta-\beta_{c})}, \quad \zeta>\beta_{c},
\]
where $\epsilon=\|u(t_{0})\|_{\hh^{\frac{1}{2}}}$ is sufficiently small to apply Theorem \ref{thm:2.8}. \\
(ii) for $\frac{3}{2}\leq p<2$,
\[
\|\Lambda^{\zeta}\bu\|_{L^{p}} \leq (\sqrt{C_{1}\epsilon})^{(1-\alpha)(1-\theta)} (\sqrt{C_{l+1} \epsilon})^{\theta} (t-t_{0})^{-\frac{1}{2}(\zeta+1-\frac{1}{p})},
\]
where $\zeta>\beta_{c}$, $l=\zeta \frac{(2-r)p}{2(p-r)}$, and $\theta=\frac{2(p-r)}{(2-r)p}$. Here, $\epsilon=\|u(t_{0})\|_{\hh^{\frac{1}{2}}}$ is sufficiently small to apply Theorem \ref{thm:2.8}. \\
\newline
\noindent
(iii) for $1<p<\frac{3}{2}$,
\[
\|\Lambda^{\zeta}\bu\|_{L^{p}} \leq (\sqrt{C_{1}\epsilon})^{(1-\alpha)(1-\theta)} (\sqrt{C_{l+1} \epsilon})^{\theta} (t-t_{0})^{-\frac{1}{2}(\zeta-\frac{1}{p})},
\]
where $\zeta>\beta_{c}-1$, $l=(\zeta-1)\frac{(2-r)p}{2(p-r)}$, and $\theta=\frac{2(p-r)}{(2-r)p}$. Here, $\epsilon=\|u(t_{0})\|_{\hh^{\frac{1}{2}}}$ is sufficiently small to apply Theorem \ref{thm:2.8}.
\end{cor}

\begin{proof}
For $p=2$
\eqn \label{eq:3.11}
\|\Lambda^{\zeta} \bu(t)\|_{L^2} \le  C_{\zeta} \|u(t_{0})\|_{\hh^{\frac{1}{2}}} (t-t_{0})^{-\frac{1}{2}(\zeta-\frac{1}{2})}, \quad \zeta>\frac{1}{2}.
\een
For $2<p \leq \infty $, we can use the Sobolev inequality (\ref{eq:2.10}) with (\ref{eq:3.11}) obtain the following the estimate
\eqn \label{eq:3.12}
\|\Lambda^{\zeta} \bu(t)\|_{L^p} \le C_{\zeta} \|u(t_{0})\|_{\hh^{\frac{1}{2}}}   (t-t_{0})^{-\frac{1}{2}(\zeta-\beta_{c})}, \quad \zeta>\beta_{c}.
\een

We now deal with the case $1<p < 2$. Let $f=\partial \bu$ and given $p>1$, we pick $r>1$ such that $r<p$. We set $j=\beta_{c}-1+k>0$ for $1 \leq p<\frac{3}{2}$ and $j=\beta_{c}+k$ for $\frac{3}{2} \leq p<2$. By the Sobolev embedding,
\[
\|\nabla^{j}f\|_{L^{p}} \leq C \|f\|^{1-\theta}_{L^{r}} \|\nabla^{l}f\|^{\theta}_{L^{2}}, \quad \text{where}
\]
\[
\theta=\frac{2(p-r)}{(2-r)p}<1 \quad \text{and} \quad l=\frac{j}{\theta}=j\frac{(2-r)p}{2(p-r)}>j \quad \text{for}\quad p<2.
\]
Therefore,
\[
\|\bu\|_{\hh^{\beta_{c}+k}_p} \leq C \|\omega\|^{1-\theta}_{L^{r}} \|\bu\|_{\hh^{l+1}}^{\theta}.
\]
Let $\zeta=\beta_{c}+k$. Then, for $\frac{3}{2}\leq p<2 $,
\eqn  \label{eq:3.13}
\|\Lambda^{\zeta}\bu\|_{L^{p}} \leq C \|\omega\|^{1-\theta}_{L^{r}} \|\Lambda^{l+1}\bu\|_{L^{2}}^{\theta}, \hspace{0.2cm} l=\zeta\frac{(2-r)p}{2(p-r)}, \hspace{0.2cm} \zeta>\beta_{c},
\een
and for $1< p<\frac{3}{2}$,
\eqn  \label{eq:3.14}
\quad \quad \|\Lambda^{\zeta}\bu\|_{L^{p}} \leq C \|\omega\|^{1-\theta}_{L^{r}} \|\Lambda^{l+1}\bu\|_{L^{2}}^{\theta}, \hspace{0.2cm} l=(\zeta-1)\frac{(2-r)p}{2(p-r)}, \hspace{0.2cm} \zeta>\beta_{c}-1.
\een
Since
\eqn \label{eq:3.15}
\quad \quad \|\omega(t)\|_{L^{r}} \leq C \|w(t)\|^{\alpha}_{L^{1}} \|\omega(t)\|^{1-\alpha}_{L^{2}} \leq C \|\nabla \bu(t)\|^{1-\alpha}_{L^{2}}, \quad 1-\alpha=2-\frac{2}{r},
\een
we finally have
\begin{equation} \label{eq:3.16}
 \begin{split}
 \|\Lambda^{\zeta}\bu\|_{L^{p}} &\leq C \|\nabla \bu(t)\|^{(1-\alpha)(1-\theta)}_{L^{2}} \|\Lambda^{l+1}\bu\|_{L^{2}}^{\theta} \\
 & \leq C \frac{(\sqrt{C_{1}\epsilon})^{(1-\alpha)(1-\theta)}} {(t-t_{0})^{\frac{1}{4}(1-\alpha)(1-\theta)}} \frac{(\sqrt{C_{l+1} \epsilon})^{\theta}} {(t-t_{0})^{\frac{1}{2}(l+\frac{1}{2}) \theta}}\\
 &= C \frac{(\sqrt{C_{1}\epsilon})^{(1-\alpha)(1-\theta)} (\sqrt{C_{l+1} \epsilon})^{\theta}} {(t-t_{0})^{\big[ \frac{1}{4}(1-\alpha)(1-\theta) + \frac{1}{2}(l+\frac{1}{2}) \theta\big]}},
 \end{split}
\end{equation}
where $(1-\alpha )(1-\theta)=\frac{2(r-1)(2-p)}{(2-r)p}$ and $\epsilon=\|u(t_{0})\|_{\hh^{\frac{1}{2}}}$ is sufficiently small to apply Theorem \ref{thm:2.8}. Since
\begin{equation*}
 \begin{split}
 \frac{1}{4}(1-\alpha)(1-\theta) + \frac{1}{2}(l+\frac{1}{2}) \theta & = \frac{1}{2}(\zeta+\frac{p-1}{p}) \quad \text{for} \quad \frac{3}{2} \leq p<2,\\
 &=\frac{1}{2}(\zeta-1+\frac{p-1}{p}) \quad \text{for} \quad 1< p< \frac{3}{2},
 \end{split}
\end{equation*}
we complete the proof.
\end{proof}
}

\subsection{Subcritical dissipative surface quasi-geostrophic equations}
These equations are given by
\begin{subequations} \label{qg}
\begin{align}
& \eta_t + u \cdot \nabla \eta + \Lambda^\kappa \eta =0, \quad (x,t) \in \R^2 \times (0,\infty ),\\
& u = \left(-{\cal R}_2\eta, {\cal R}_1\eta \right) := \left(-\partial_{x_2}\Lambda^{-1} \eta, \partial_{x_1}\Lambda^{-1} \eta\right), \\
& \eta(x,0) = \eta_0(x).
\end{align}
\end{subequations}
We consider the sub-critical case when the parameter $\kappa $ satisfies $\kappa \in (1,2]$. For initial data in $L^p, p \ge \frac{2}{\kappa -1}$, the global existence of a unique, regular solution to (\ref{qg}) in the sub-critical case  is known (see \cite{Carrillo} and the references there in). The following theorem concerning the long time behavior of higher Sobolev norms was first given in \cite{dong}. The proof in \cite{dong} involves iterative estimation of higher order derivatives involving elaborate combinatorial arguments. We obtain this result as an application of Theorem \ref{thm:2.8}. The use of Gevrey class technique presented here provides an alternate proof.

\begin{theorem}  \label{thm:qgdecay}
Let $\kappa \in (1,2]$ and denote $p_0=\frac{2}{\kappa -1}$. Let $\eta $ be the unique regular solution to (\ref{qg})  for initial data $\eta_0 \in L^{p_0}(\R^2)$. Then, the following estimate holds 
\begin{gather}  \label{qgdecay}
\|\eta\|_{\hh^{\alpha}_{p_0}} \le \frac{C^{\alpha} \alpha^{\alpha}}{t^{\frac{\alpha}{\kappa}}} \ \ \mbox{for all} \ t>0, \ \alpha >0,
\end{gather}
where the constant $C$ may depend on $\eta_0$, but is independent of $t, \alpha$.
\end{theorem}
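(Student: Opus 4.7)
The strategy is to fit the quasi-geostrophic equation (\ref{qg}) into the abstract framework of Theorem \ref{thm:2.8}, show that $L^{p_0}$ is the critical space with $\beta_c = 0$, exploit sub-critical QG theory to reach the small-data regime in finite time, and then read off the higher-Sobolev decay from the global Gevrey bound produced by Theorem \ref{thm:2.8}. Using $\nabla\cdot u = 0$, rewrite $G(\eta) = -\nabla\cdot(u\eta) = T_0(T_1\eta \cdot T_2\eta)$ with $T_0 = \nabla$ ($\alpha_{T_0}=1$), $T_1 = (-\mathcal R_2,\mathcal R_1)$ ($\alpha_{T_1}=0$), $T_2 = I$ ($\alpha_{T_2}=0$). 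Then $n=2$, $d=2$, $\sum_{i=0}^{n}\alpha_{T_i}=1\le \kappa$, and (\ref{eq:2.12}) gives $\beta_c = 2/p-(\kappa-1)$, vanishing precisely at $p=p_0$. The remaining conditions (\ref{eq:2.13}) reduce to $0 > \max\{-1/p_0,\, 1-\kappa\}$, which holds for $\kappa>1$.

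The key dynamical input is that $\liminf_{t\to\infty}\|\eta(t)\|_{L^{p_0}}=0$. Since $p_0\ge 2$, the C\'ordoba--C\'ordoba positivity-preserving estimate makes $\|\eta(t)\|_{L^{p_0}}$ non-increasing and yields a dissipation integral of the form $\int_0^\infty\|\Lambda^{\kappa/2}(|\eta|^{p_0/2})\|_{L^2}^2\,ds<\infty$; a Fourier-splitting argument in the spirit of Schonbek then upgrades this to convergence to zero. This is where all of the sub-critical QG theory is used, and it is the main obstacle of the argument.

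Granted this, pick $t_0=t_0(\eta_0)$ with $\|\eta(t_0)\|_{L^{p_0}}<\epsilon$, where $\epsilon$ is the smallness threshold in Theorem \ref{thm:2.8}. Applying Theorem \ref{thm:2.8} with initial time $t_0$, data $\eta(t_0)\in \hh^0_{p_0}$, and $\beta_0=\beta_c=0$ yields
\[
\sup_{\tau>0}\|\eta(t_0+\tau)\|_{Gv(\tau,0,p_0)} \le 2\|\eta(t_0)\|_{L^{p_0}}.
\]
Combining this with the elementary multiplier bound $\|\Lambda^\alpha e^{-\sigma\Lambda_1}\|_{L^{p_0}\to L^{p_0}}\le (C\alpha/\sigma)^\alpha$ underlying (\ref{gevdecay}) gives
\[
\|\Lambda^\alpha\eta(t)\|_{L^{p_0}} \le C^\alpha\alpha^\alpha(t-t_0)^{-\alpha/\kappa}\|\eta(t_0)\|_{L^{p_0}}, \qquad t>t_0.
\]
For $t\ge 2t_0$ the factor $2^{1/\kappa}$ coming from $(t-t_0)^{-\alpha/\kappa}\le 2^{\alpha/\kappa}t^{-\alpha/\kappa}$ is absorbed into the constant $C$.

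For the remaining range $0 < t < 2t_0$, I would invoke the local-in-time part of Theorem \ref{thm:2.8} applied directly to $\eta_0\in L^{p_0}$ to handle a short interval $(0,T(\eta_0))$, and then iterate the theorem with $\beta_0>0$ at successive times in the compact interval $[T,\,2t_0]$ using the smoothing already produced to ensure the relevant norms are finite; this yields a uniform Gevrey bound on $[T,\,2t_0]$. Since $t$ is bounded below by $T$ there, the uniform bound can be recast in the desired form $C^\alpha\alpha^\alpha t^{-\alpha/\kappa}$ by absorbing $(2t_0/T)^{\alpha/\kappa}$ into $C$. Combining the three regimes yields (\ref{qgdecay}) with $C$ depending on $\eta_0$ but not on $t$ or $\alpha$.
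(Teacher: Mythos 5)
Your proposal is correct and follows essentially the same route as the paper: identify $T_0=\nabla$, $T_1=\mathcal{R}$, $T_2=I$ so that $\beta_c=0$ in $L^{p_0}$, use the known decay $\|\eta(t)\|_{L^{p_0}}\to 0$ to enter the small-data regime and get the global Gevrey bound for large $t$, use the local result near $t=0$, and patch the compact middle interval. The one step to tighten is that middle interval: iterating the local ($\beta_0>0$) theorem "at successive times" is not by itself enough, since the local existence times depend on $\|\eta(s)\|_{\hh^{\beta}_{p_0}}$ and could a priori shrink before the interval is covered; the paper instead invokes the known global well-posedness of the subcritical equation to get a uniform bound $M=\sup_{[t_2,t_1+1]}\|\eta(t)\|_{\hh^{\beta}_{p_0}}<\infty$ first, which yields a single uniform shift $t_3$ and hence a uniform Gevrey bound on the whole compact interval.
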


\subsection{Nonlinear Heat Equation with Fractional Dissipation}
We consider the nonlinear heat equation of the form
\begin{gather}   \label{nonlinheat}
u_t + \Lambda^\kappa = \alpha |u|^{(n-1)}u, \quad x \in \R^d, \ t \ge 0, \ \alpha \in \R.
\end{gather}
Here we take $\kappa , n \in (1,\infty)$. For all initial data $u_0 \in \hh^{\beta}_2, \ \beta \ge \beta_c=\frac{d}{2} - \frac{\kappa}{n-1}$, local in time analyticity of the solution follows immediately from Theorem \ref{thm:2.8}. We give an application of Theorem \ref{thm:2.8} to the decay of solutions.

\begin{theorem}   \label{thm:nonlinheat}
Let $u$ be a solution of (\ref{nonlinheat}) with space periodic boundary condition. Let
$|\alpha | < \epsilon $ for a suitably small $\epsilon $ and assume that
either (i) or (ii) below holds:
\begin{gather}
(i)\  \kappa < d\ \mbox{and}\ n \le \frac{d+\kappa}{d-\kappa}\quad (ii)\ \kappa \ge d,\ n \in (1,\infty).
\end{gather}
In this case,
\[
\left\|\Lambda^{\zeta} u(t)\right\|_{L^{2}} \leq C_{\zeta} \left\|u(t_{0})\right\|_{\hh^{\beta_c}} (t-t_{0})^{-\frac{1}{2}(\zeta-\beta_{c})}, \quad \zeta>\beta_{c}:=\frac{d}{2}-\frac{\kappa}{n-1},
\]
where $\left\|u(t_{0})\right\|_{\hh^{\beta_c}}$ is sufficiently small to apply Theorem \ref{thm:2.8}. In the whole space case, the same conclusion remains valid if $\kappa < d\ \mbox{and}\ n =\frac{d+\kappa}{d-\kappa}$.
\end{theorem}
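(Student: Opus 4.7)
The plan is to follow the general scheme outlined in \eqref{eq:3.1}--\eqref{eq:3.2}: I would first establish global existence together with eventual smallness of $\|u(t)\|_{\hh^{\beta_c}}$, then apply Theorem~\ref{thm:2.8} with $u(t_0)$ as the new initial datum to obtain a global-in-time Gevrey bound, and finally extract the claimed decay from that bound via \eqref{eq:3.2}. The nonlinearity $\alpha|u|^{n-1}u$ fits the framework \eqref{eq:2.2} with each $T_i$ the identity (so $\alpha_{T_i}=0$), so condition \eqref{eq:2.13} of Theorem~\ref{thm:2.8} is automatic and the critical regularity index is exactly $\beta_c = d/2 - \kappa/(n-1)$.

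The first step would be the standard energy estimate: testing \eqref{nonlinheat} against $u$ and integrating yields
\[
\tfrac{1}{2}\tfrac{d}{dt}\|u\|_{L^2}^2 + \|\Lambda^{\kappa/2} u\|_{L^2}^2 = \alpha \int |u|^{n+1}\,dx.
\]
Conditions (i) and (ii) are precisely what guarantee the Sobolev embedding $\hh^{\kappa/2}_2 \hookrightarrow L^{n+1}$: the inequality $\kappa/2 \ge d/2 - d/(n+1)$ rearranges to $n(d-\kappa) \le d+\kappa$, which is vacuous in case (ii) and is exactly the stated bound $n \le (d+\kappa)/(d-\kappa)$ in case (i). Thus the right-hand side is controlled by $C|\alpha|\|\Lambda^{\kappa/2}u\|_{L^2}^{n+1}$, and a bootstrap argument exploiting smallness of $|\alpha|$ together with an a priori $\hh^{\kappa/2}_2$ bound on $u_0$ keeps $\|u(t)\|_{\hh^{\kappa/2}_2}$ bounded for all time and gives
\[
\frac{d}{dt}\|u\|_{L^2}^2 + \|\Lambda^{\kappa/2} u\|_{L^2}^2 \le 0.
\]
In the periodic case, restricting to the mean-zero invariant subspace, the Poincar\'e inequality then forces exponential decay of $\|u\|_{L^2}$, and interpolation with the uniform $\hh^{\kappa/2}_2$ bound yields $\|u(t)\|_{\hh^{\beta_c}} \to 0$. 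For the whole-space energy-critical case $n = (d+\kappa)/(d-\kappa)$, I would invoke a Schonbek-type Fourier-splitting argument to conclude at least that $\liminf_{t\to\infty}\|u(t)\|_{\hh^{\beta_c}} = 0$.

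Once a time $t_0$ is chosen with $\|u(t_0)\|_{\hh^{\beta_c}}$ below the smallness threshold $\epsilon$ in Theorem~\ref{thm:2.8}, that theorem, applied on $[t_0,\infty)$ with initial datum $u(t_0)$, yields the global Gevrey bound
\[
\sup_{t>t_0}\|u(t)\|_{Gv(t-t_0,\beta_c,2)} \le 2\|u(t_0)\|_{\hh^{\beta_c}},
\]
and substitution into \eqref{eq:3.2} produces the stated decay. The hard part will be the second step, particularly in the whole-space energy-critical case, where the plain energy estimate alone does not deliver decay of the critical norm and some refinement (e.g.\ a Fourier-splitting or concentration-compactness type argument) is required to extract the needed smallness at some time $t_0$.
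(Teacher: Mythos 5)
Your overall scheme (energy estimate $\to$ smallness of the critical norm at some time $t_0$ $\to$ Theorem \ref{thm:2.8} $\to$ (\ref{eq:3.2})) is the same as the paper's, and your identification of conditions (i)--(ii) with the Sobolev embedding $\hh^{\kappa/2}\hookrightarrow L^{n+1}$ is exactly the role they play. But the middle step as you have written it contains a genuine error: the mean-zero subspace is \emph{not} invariant under (\ref{nonlinheat}), since $\frac{d}{dt}\int u\,dx=\alpha\int |u|^{n-1}u\,dx$ and the right-hand side does not vanish for mean-zero $u$ (unlike the situation in Theorem \ref{thm:4.5}, where this invariance is an explicit hypothesis on $G$). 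So the Poincar\'e-driven exponential decay of $\|u\|_{L^2}$, and the subsequent interpolation giving $\|u(t)\|_{\hh^{\beta_c}}\ra 0$, are unjustified. The error is also unnecessary: by (\ref{eq:3.1}) you only need $\liminf_{t\ra\infty}\|u(t)\|_{\hh^{\beta_c}}=0$, not genuine decay. The paper gets this directly from the integrated energy inequality $\int_0^\infty\|\Lambda^{\kappa/2}u(s)\|_{L^2}^2\,ds\le\|u_0\|_{L^2}^2$, which forces $\liminf_{t\ra\infty}\|\Lambda^{\kappa/2}u(t)\|_{L^2}=0$ along a sequence of times; since (i)--(ii) give $\beta_c\le\kappa/2$, Poincar\'e (applied to $u$ minus its mean, which $\Lambda^{\beta_c}$ annihilates anyway) transfers this to $\liminf\|\Lambda^{\beta_c}u\|_{L^2}=0$.

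The same observation disposes of what you call ``the hard part.'' In the whole-space case the theorem only asserts the conclusion for $n=\frac{d+\kappa}{d-\kappa}$, which is precisely the equality $\beta_c=\kappa/2$; hence $\liminf_{t\ra\infty}\|u(t)\|_{\hh^{\beta_c}}=\liminf_{t\ra\infty}\|\Lambda^{\kappa/2}u(t)\|_{L^2}=0$ comes for free from the same integrability statement, and no Fourier-splitting or concentration-compactness argument is needed. Finally, your absorption of the nonlinear term via a bootstrap requiring an a priori $\hh^{\kappa/2}$ bound on $u_0$ imports a hypothesis the theorem does not make; the paper instead absorbs $\alpha\int|u|^{n+1}$ into the dissipation using only the smallness of $|\alpha|$ together with the uniform $L^2$ bound, yielding (\ref{nonlinheatineq}) with no extra assumption on the data.
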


\begin{remark}
Local in time analyticity result for this equation can be found in \cite{gk2, Gramchev} while some asymptotic properties of solutions (particularly, involving the $L^\infty$ norm of the solution) can be found in \cite{gmira}, \cite{escobedo}, \cite{v2} and references there in. However, the higher order decay results for (\ref{nonlinheat}) appear to be new.
\end{remark}

\subsection{Burgers-type equation with higher order nonlinearity}
We now give an application  to decay for a viscous Burgers' equation on $\R$  of the form
\eqn \label{eq:3.17}
\partial_t u - \Delta u = \partial_x (u^n), \quad n \ge 3.
\een
Here we take $p=2$ and, as is customary, we denote $\hh^{\beta}_2 = \hh^\beta, \beta \in \R, \beta \neq 0$ and $\hh^0 = L^2$. For $p=2, \kappa=2, \alpha_{T_0}=1$ and $ \alpha_{T_i}=0, 1\le i \le n$, the critical space for (\ref{eq:3.17}) is $\hh^{\beta_{c}}$ with $\beta_c=\frac{1}{2}-\frac{1}{n-1}$.

\begin{theorem} \label{thm:3.2}
Let $u_{0} \in \hh^{-1} \cap L^{2}$ for $n=3$ and $u_{0}\in L^{2}$ for $n\ge 4$. Then, for any weak solution $u$ of (\ref{eq:3.17}), there exists $t_0>0$  such that
\[
\left\|\Lambda^{\zeta} u(t) \right\|_{L^{2}} \leq C_{\zeta} \left\|u(t_{0})\right\|_{\hh^{\beta_c}} (t-t_{0})^{-\frac{1}{2}(\zeta-\beta_{c})}, \quad \zeta>\beta_{c},
\]
where $C_\zeta \sim \zeta^\zeta C^\zeta$ and $\|u(t_{0})\|_{\hh^{\beta_c}}$ is sufficiently small to apply Theorem \ref{thm:2.8}.
\end{theorem}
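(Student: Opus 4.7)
The plan is to specialize Theorem \ref{thm:2.8} to (\ref{eq:3.17}) and then to verify the hypothesis (\ref{eq:3.1}), namely $\liminf_{t\to\infty}\|u(t)\|_{\hh^{\beta_c}}=0$. Once that is done, some large transient time $t_0$ will bring $\|u(t_0)\|_{\hh^{\beta_c}}$ below the smallness threshold of Theorem \ref{thm:2.8}, and the resulting small-data global Gevrey bound together with the elementary Gevrey-to-Sobolev estimate (\ref{gevdecay}) immediately yields the claimed decay with $C_\zeta\sim C^\zeta\zeta^\zeta$.

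First I identify the abstract parameters: (\ref{eq:3.17}) fits the framework (\ref{eq:2.2}) with $\kappa=2$, $d=1$, $p=2$, $T_0=\partial_x$ (so $\alpha_{T_0}=1$) and $T_1=\cdots=T_n=I$ (so $\alpha_{T_i}=0$ for $i\ge 1$), giving $\beta_c=\tfrac12-\tfrac{1}{n-1}$; condition (\ref{eq:2.13}) is trivially satisfied. The basic energy identity is standard, since
$$\int u\,\partial_x(u^n)\,dx=\frac{1}{n+1}\int\partial_x(u^{n+1})\,dx=0,$$
and so any weak solution obeys $\|u(t)\|_{L^2}^2+2\int_0^t\|\nabla u\|_{L^2}^2\,ds\le\|u_0\|_{L^2}^2$.

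For $n\ge 4$ one has $\beta_c\in[\tfrac{1}{6},\tfrac{1}{2})$, and the verification is immediate: the energy identity forces a sequence $t_k\to\infty$ with $\|\nabla u(t_k)\|_{L^2}\to 0$, and interpolation gives
$$\|u(t_k)\|_{\hh^{\beta_c}}\le\|u(t_k)\|_{L^2}^{1-\beta_c}\|\nabla u(t_k)\|_{L^2}^{\beta_c}\le\|u_0\|_{L^2}^{1-\beta_c}\|\nabla u(t_k)\|_{L^2}^{\beta_c}\longrightarrow 0,$$
so I pick $t_0=t_k$ for $k$ large. The delicate case is $n=3$, where $\beta_c=0$ so $\hh^{\beta_c}=L^2$ and the energy identity alone yields only boundedness; here I use the extra hypothesis $u_0\in\hh^{-1}$ together with Schonbek's Fourier splitting. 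Testing (\ref{eq:3.17}) against $\Lambda^{-2}u$ and using the boundedness of the Hilbert transform $H=\Lambda^{-1}\partial_x$ on $L^2$ together with the one-dimensional Gagliardo--Nirenberg bound $\|u\|_{L^6}^3\le C\|u\|_{L^2}^2\|\nabla u\|_{L^2}$ gives
$$\frac{d}{dt}\|u\|_{\hh^{-1}}^2+2\|u\|_{L^2}^2=2\!\int\!(Hu)\,u^3\,dx\le C\|u_0\|_{L^2}^3\|\nabla u\|_{L^2}.$$
Integration plus Cauchy--Schwarz on $\int_0^t\|\nabla u\|_{L^2}\,ds$ yields $\|u(t)\|_{\hh^{-1}}^2\le C(1+t^{1/2})$. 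Splitting frequency space at radius $g(t)^2=k/(1+t)$ with $k$ sufficiently large and using
$$\|\nabla u\|_{L^2}^2\ge g(t)^2\left(\|u\|_{L^2}^2-g(t)^2\|u\|_{\hh^{-1}}^2\right)$$
produces the differential inequality
$$\frac{d}{dt}\|u\|_{L^2}^2+\frac{2k}{1+t}\|u\|_{L^2}^2\le\frac{C}{(1+t)^{3/2}},$$
which integrates to $\|u(t)\|_{L^2}=o(1)$ and so furnishes the required $t_0$.

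The main obstacle is the $n=3$ case: because $\beta_c=0$ and the $L^2$ energy is only nonincreasing, an \emph{external} decay mechanism is required, which is why $u_0\in\hh^{-1}$ is imposed; moreover the $\hh^{-1}$ norm is itself not dissipated but only polynomially growing, so coupling this controlled growth to the Fourier splitting is the technically delicate step. Once $t_0$ is obtained in either case, Theorem \ref{thm:2.8} applied on $[t_0,\infty)$ delivers $\sup_{t>t_0}\|u(t)\|_{Gv(t-t_0,\beta_c,2)}\le 2\|u(t_0)\|_{\hh^{\beta_c}}$, after which (\ref{gevdecay}) with $\kappa=2$ (equivalently, commuting $\Lambda^{\zeta-\beta_c}$ past $e^{-c(t-t_0)^{1/2}\Lambda_1}$ and estimating $\|\Lambda^\alpha e^{-s\Lambda_1}\|_{L^2\to L^2}\lesssim\alpha^\alpha s^{-\alpha}$) yields the stated bound with $C_\zeta\sim C^\zeta\zeta^\zeta$.
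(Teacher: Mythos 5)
Your overall architecture is the paper's: reduce everything to verifying $\liminf_{t\ra\infty}\|u(t)\|_{\hh^{\beta_c}}=0$, get it from the energy inequality plus interpolation when $n\ge 4$, and handle $n=3$ (where $\beta_c=0$) through an auxiliary $\hh^{-1}$ estimate. The $n\ge 4$ half and the final passage from the global Gevrey bound to the decay estimate are fine. The problem is in your $n=3$ argument, and it is a genuine gap, not a stylistic difference. When you test (\ref{eq:3.17}) against $\Lambda^{-2}u$, the nonlinear term is $2\int \Lambda^{-2}u\,\partial_x(u^3)\,dx=-2\int (\partial_x\Lambda^{-2}u)\,u^3\,dx$, and $\partial_x\Lambda^{-2}=(\partial_x\Lambda^{-1})\Lambda^{-1}$: the correct integrand involves $H\Lambda^{-1}u$, not $Hu$. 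Since $\Lambda^{-1}$ is \emph{not} bounded on $L^2(\R)$, you cannot replace $\|\partial_x\Lambda^{-2}u\|_{L^2}$ by $\|u\|_{L^2}\le\|u_0\|_{L^2}$; the honest bound is $\|\Lambda^{-1}u\|_{L^2}\|u\|_{L^6}^3\le C\,\|u\|_{\hh^{-1}}\|u_0\|_{L^2}^2\|\nabla u\|_{L^2}$, which carries an extra factor of $Y(t):=\|u(t)\|_{\hh^{-1}}$ on the right. Running your integration and Cauchy--Schwarz on this corrected inequality gives $\frac{d}{dt}Y\le C\|\nabla u\|_{L^2}$, hence only $Y(t)^2\le C(1+t)$, not $C(1+t^{1/2})$. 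This degradation is fatal to your Fourier splitting as set up: with $Y^2\lesssim 1+t$ the source term becomes $2k^2(1+t)^{-2}Y^2\lesssim k^2(1+t)^{-1}$, and integrating $\frac{d}{dt}\bigl[(1+t)^{2k}\|u\|_{L^2}^2\bigr]\lesssim k^2(1+t)^{2k-1}$ yields only $\limsup_{t\ra\infty}\|u(t)\|_{L^2}^2\le Ck$, which is no decay at all (and worsens as $k$ grows). So the key intermediate bound on which your splitting rests is obtained by an invalid step, and the corrected version of your own estimate does not suffice.

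The fix is essentially what the paper does, and it makes the Fourier splitting unnecessary. Keep the factor $\|\Lambda^{-1}u\|_{L^2}$ and estimate $\|u^3\|_{L^2}\le\|u\|_{L^2}\|u\|_{L^\infty}^2$, then apply Young's inequality to absorb $\frac12\|u\|_{L^2}^2$ into the left side, arriving at $\frac{d}{dt}Y^2+\|u\|_{L^2}^2\le C\,Y^2\|u\|_{L^\infty}^4$. By the one-dimensional Agmon inequality (\ref{eq:3.19}) and the energy inequality, $\int_0^\infty\|u\|_{L^\infty}^4\,ds\le\|u_0\|_{L^2}^2\int_0^\infty\|\nabla u\|_{L^2}^2\,ds\le\|u_0\|_{L^2}^4<\infty$, so Gronwall gives a \emph{uniform in time} bound $Y(t)^2\le C\|\Lambda^{-1}u_0\|_{L^2}^2\exp(\|u_0\|_{L^2}^4)$. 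Then the interpolation $\|u\|_{L^2}^2\le\|u\|_{\hh^{-1}}\|u\|_{\hh^{1}}$ together with $\liminf_{t\ra\infty}\|u(t)\|_{\hh^1}=0$ (already available from the energy inequality) delivers $\liminf_{t\ra\infty}\|u(t)\|_{L^2}=0$ directly. If you insist on Schonbek's splitting, the uniform $\hh^{-1}$ bound also feeds into it and even yields a rate $\|u(t)\|_{L^2}^2\lesssim(1+t)^{-1}$, but some version of the Gronwall step above is needed first; the splitting cannot compensate for a linearly growing $\hh^{-1}$ norm.
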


\begin{remark}
In case $n=2$, the critical space is $\hh^{-\frac{1}{2}}$. If we show that 
\eqn \label{eq:3.20}
\liminf_{t \ra \infty}\|u\|_{\hh^{-\frac{1}{2}}}=0,
\een
decay of higher Sobolev norms hold. The higher order decay result for $n \ge 3$ appears to be new.
\end{remark}

\subsection{Cahn-Hilliard equation (special case)}
The Cahn-Hilliard equation  is given by
\eqn \label{eq:3.21}
u_t = - \Delta^2 u - \alpha \Delta u + \beta \Delta (u^3), \quad x \in \R^d, \ \beta >0, \ \alpha \ge 0.
\een
We here consider the special case when $\alpha =0$ and $\beta > 0$.  Since $n=3, \kappa=4, \alpha_{T_0}=2, \alpha_{T_i}=0, i=1,2,3$,  $\beta_{c}:=\frac{d}{2}-1$. We have the following decay result.

\begin{theorem} \label{thm:3.3}
Let $u$ be a (weak) solution of (\ref{eq:3.21}) with initial data $u_0 \in L^2$. Then, there exists $t_{0}>0$ such that
\[
\left\|\Lambda^{\zeta} u(t) \right\|_{L^{2}} \leq C_{\zeta} \left\|u(t_{0})\right\|_{\hh^{\beta_c}} (t-t_{0})^{-\frac{1}{2}(\zeta-\beta_{c})}, \quad \zeta>\beta_{c}
\]
where $C_\zeta \sim \zeta^\zeta C^\zeta$ when $p=2$ and $\|u(t_{0})\|_{\hh^{\beta_c}}$ is sufficiently small to apply Theorem \ref{thm:2.8}.
\end{theorem}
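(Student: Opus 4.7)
The plan is to follow the template laid out in (3.1)--(3.2) at the start of Section 3: first pin down a transient time $t_0$ at which the critical-space norm $\|u(t_0)\|_{\hh^{\beta_c}}$ falls below the smallness threshold of Theorem \ref{thm:2.8}, then invoke that theorem from initial time $t_0$ to obtain a global-in-time Gevrey-regular solution, and finally convert the Gevrey bound into the stated Sobolev decay via (3.2). With the identifications $n=3$, $\kappa=4$, $T_0 = \Delta$ (so $\alpha_{T_0}=2$), and $T_i=\mathrm{id}$ for $i=1,2,3$ (so $\alpha_{T_i}=0$), one has $\beta_c = d/2 - 1$, and the admissibility conditions (\ref{eq:2.13}) reduce to $\sum \alpha_{T_i}=2\leq 4=\kappa$ together with $0>\max\{-d/6,-1\}$, both of which hold in every dimension.

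The first concrete step is to derive the basic energy identity. Testing (\ref{eq:3.21}) (with $\alpha=0$) against $u$ and integrating by parts produces
\begin{equation*}
\frac{1}{2}\frac{d}{dt}\|u\|_{L^2}^2 + \|\Delta u\|_{L^2}^2 + 3\beta \int_{\R^d} u^2|\nabla u|^2 dx = 0,
\end{equation*}
using $\int u\,\Delta(u^3)\,dx = -\int \nabla(u^3)\cdot \nabla u\,dx = -3\int u^2|\nabla u|^2 dx$. Since $\beta>0$, this yields $\|u(t)\|_{L^2}\leq \|u_0\|_{L^2}$ together with $\int_0^\infty \|\Delta u(s)\|_{L^2}^2 ds \leq \tfrac{1}{2}\|u_0\|_{L^2}^2$. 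In particular there is a sequence $t_n\to\infty$ along which $\|\Delta u(t_n)\|_{L^2}\to 0$.

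The second and principal step is to verify $\liminf_{t\to\infty}\|u(t)\|_{\hh^{\beta_c}}=0$, i.e.\ condition (\ref{eq:3.1}). For $\beta_c\in(0,2]$, corresponding to $d\in\{3,4,5,6\}$, the interpolation inequality
\begin{equation*}
\|u\|_{\hh^{\beta_c}} \leq \|u\|_{L^2}^{1-\beta_c/2}\,\|\Delta u\|_{L^2}^{\beta_c/2}
\end{equation*}
evaluated along $t=t_n$ immediately gives $\|u(t_n)\|_{\hh^{\beta_c}}\to 0$. In the borderline dimension $d=2$ (where $\beta_c=0$ and one needs $\|u(t_n)\|_{L^2}\to 0$), this is supplied by a Schonbek-type Fourier-splitting argument: split at scale $R(t)\to 0$, use $\|\Delta u\|_{L^2}^2\geq R(t)^4\int_{|\xi|>R(t)}|\hat u|^2 d\xi$ for the high frequencies, and control the low frequencies via the mild formulation together with the favourable factor $|\xi|^2$ appearing in $\widehat{\Delta(u^3)}$ near the origin. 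Dimensions $d\geq 7$ (where $\beta_c>2$) are handled by first producing a higher-order energy identity---testing against $(-\Delta)^s u$ for a suitable $s$---to extract a subsequence along which a higher Sobolev norm tends to zero, and then interpolating downwards.

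Having selected $t_0=t_n$ with $\|u(t_0)\|_{\hh^{\beta_c}}<\epsilon$ (the smallness threshold from Theorem \ref{thm:2.8}), the third step is immediate: Theorem \ref{thm:2.8} produces a global solution on $[t_0,\infty)$ satisfying $\sup_{t>t_0}\|u(t)\|_{Gv(t-t_0,\beta_c,2)}\leq 2\|u(t_0)\|_{\hh^{\beta_c}}$. Combining this with the pointwise Fourier-side bound $|\xi|^{\zeta-\beta_c}\leq C^\zeta\zeta^\zeta (t-t_0)^{-(\zeta-\beta_c)/\kappa}\,e^{c(t-t_0)^{1/\kappa}|\xi|_1}$ and Plancherel yields (\ref{eq:3.2}) with $C_\zeta\sim C^\zeta\zeta^\zeta$, giving the claimed decay. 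The main obstacle in this plan is the second step, and in particular the Fourier-splitting implementation in $d=2$: one must reconcile the conservation $\partial_t\hat u(0,t)=0$ (which allows low-frequency mass to persist) with the desired $L^2$ decay, either by restricting to data with $\hat u_0(0)=0$ or by carrying the conserved zero-frequency contribution carefully through the splitting estimate.
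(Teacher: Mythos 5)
Your overall template --- establish (\ref{eq:3.1}), then invoke the global small-data part of Theorem \ref{thm:2.8} and convert the Gevrey bound into decay via (\ref{eq:3.2}) --- is exactly the paper's, as are the $L^2$ energy identity, the verification of (\ref{eq:2.13}) with $n=3$, $\kappa=4$, $\alpha_{T_0}=2$, and the interpolation between $L^2$ and $\hh^{2}$ when $\beta_c=\frac{d}{2}-1\in(0,2]$. The genuine gap is in the low-dimensional cases. For $d=2$ you need $\liminf_{t\to\infty}\|u(t)\|_{L^2}=0$ and you propose a Schonbek-type Fourier splitting which you yourself leave unresolved (the zero-mode issue, the low-frequency control of the nonlinearity); for $d=1$, where $\beta_c=-\tfrac12<0$, you say nothing at all, and no interpolation between $L^2$ and $\hh^{2}$ can produce smallness of a \emph{negative}-order norm.

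The device you are missing is a second, negative-order energy estimate: multiply (\ref{eq:3.21}) by $\Lambda^{-2}u$ and integrate by parts. Since $\Delta=-\Lambda^{2}$, the nonlinear term collapses to $\beta\int \Lambda^{-2}\Delta(u^{3})\,u\,dx=-\beta\int u^{4}\,dx\le 0$, so
\[
\frac{1}{2}\frac{d}{dt}\left\|\Lambda^{-1}u\right\|_{L^{2}}^{2}+\left\|\Lambda u\right\|_{L^{2}}^{2}\le 0,
\]
which gives a time-independent bound on $\|u\|_{\hh^{-1}}$ together with $\liminf_{t\to\infty}\|u\|_{\hh^{1}}=0$. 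Interpolating between $\hh^{-1}$ and $\hh^{1}$ then yields (\ref{eq:3.1}) for every $\beta_c\in(-1,1)$, covering both $d=1$ and $d=2$ with no Fourier splitting and no mean-zero restriction (at the tacit cost of assuming $u_0\in\hh^{-1}$, which the paper leaves implicit). Your sketch for $d\ge 7$ (testing against $(-\Delta)^{s}u$) is likewise not carried out and its sign structure is unclear; note the paper's own claim that interpolation handles all $d\ge 3$ really only covers $\beta_c\le 2$, i.e.\ $d\le 6$, so neither argument settles that range.
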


The generalized Cahn-Hilliard equation has been studied in \cite{temamdyn}. Local in time Gevrey regularity for for a particular case of this equation has been established in \cite{swanson11}. Due to Theorem \ref{thm:4.5}, we can in fact obtain a global Gevrey regularity result. Consequently,
 we can improve the previous decay result  to include the  generalized Cahn-Hilliard equation in the periodic case.

\begin{theorem} \label{thm:4.6}
Let $u:\mathbb{T}^{d} \ra \R, d=1,2,3$ be a solution of the general Cahn-Hilliard equation of the form
\[
u_t - \Delta^2 u = \Delta f(u), \quad f(u)=\sum_{j=1}^{2N-1}a_ju^j \quad \mbox{and} \quad a_{2N-1}=A >0.
\]
Assume further that
\eqn \label{eq:4.9}
\sum_{j=1}^{2N-2} j|a_j| < \delta,
\een
for $\delta $ suitably small and let $u_0 \in \hh^{\beta}$ with $ \frac{d}{2}< \beta < 2$. Then, any solution $u$ satisfies the decay estimate
\[
\left\|\Lambda^\zeta u(t)\right\|_{L^2} \le C_{\zeta} \|u(t_{0})\|_{\hh^{\beta}} (t-t_{0})^{-\frac{\zeta-\beta}{\kappa}}, \quad \zeta>\beta,
\]
where $\|u(t_{0})\|_{\hh^{\beta}}$ is sufficiently small to apply Theorem \ref{thm:4.5}.
\end{theorem}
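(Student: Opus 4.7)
\textbf{The plan} is to identify the Cahn-Hilliard equation as fitting the setup of Theorem~\ref{thm:4.5}, show that each solution eventually enters a ball small enough in $\h^\beta$ to apply that theorem, and extract decay of higher Sobolev norms from the resulting global Gevrey bound via the interpolation (\ref{gevdecay}).

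\emph{Step 1 (Framework).} Writing the equation in the form (\ref{eq:1.1}) gives $\kappa=4$ and $G(u)=\Delta f(u)$. In the notation of (\ref{eq:4.3}) we set $T_0=\Delta$ with $\alpha_{T_0}=2$, $T_1=I$ with $\alpha_{T_1}=0$, and $F=f$, so that $(\alpha_{T_0}+\alpha)/\kappa=1/2<1$ as required by Theorem~\ref{thm:4.5}. Because $\Delta$ is a divergence, $G$ preserves the mean-zero subspace, and conservation of the spatial average by the equation lets us reduce to mean-zero initial data via $v=u-\bar u$, absorbing the shift into a new polynomial $\tilde f(v)=f(v+\bar u)$; the smallness hypothesis (\ref{eq:4.9}) is stable under this reduction once $\delta$ is chosen appropriately.

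\emph{Step 2 (Transient smallness).} I would then prove the existence of a transient time $t_0$ with $\|u(t_0)\|_{\h^\beta}<\epsilon$, where $\epsilon$ is the smallness threshold of Theorem~\ref{thm:4.5}. The natural tool is the Lyapunov functional
\[
E(u)=\int_{\mathbb T^d}\Bigl[\tfrac12|\nabla u|^2+F(u)\Bigr]dx,\qquad F'=f,
\]
which satisfies $E'(t)=-\|\nabla(-\Delta u+f(u))\|_{L^2}^2\le 0$. Because $a_{2N-1}=A>0$ while the lower-order coefficients are small by (\ref{eq:4.9}), $F$ is coercive ($F(u)\ge\tfrac{A}{4N}u^{2N}-C$), giving a uniform $\h^1$ bound and a bounded absorbing set. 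Running an $L^2$ energy estimate
\[
\tfrac{1}{2}\tfrac{d}{dt}\|u\|_{L^2}^2+\|\Delta u\|_{L^2}^2=-\int f'(u)|\nabla u|^2,
\]
using (\ref{eq:4.9}) to absorb the lower-order part of $\int f'(u)|\nabla u|^2$ into $\|\Delta u\|_{L^2}^2$ and invoking the spectral gap $\lambda_0=2\pi/L$ for mean-zero functions on $\mathbb T^d$, together with standard parabolic smoothing for $u_t+\Delta^2u=\Delta f(u)$, bootstraps the absorbing property from $L^2$ to $\h^\beta$ with a radius that can be made as small as we wish by choosing $\delta$ small. This produces the required $t_0$.

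\emph{Step 3 (Global Gevrey and decay).} At time $t_0$ apply Theorem~\ref{thm:4.5} to the shifted trajectory $\tilde u(t):=u(t_0+t)$ with initial datum $u(t_0)$; the coefficient condition of that theorem is exactly (\ref{eq:4.9}). This produces $\sup_{t>t_0}\|u(t)\|_{Gv(t-t_0,\beta,2)}\le C\|u(t_0)\|_{\h^\beta}$. Applying the Gevrey-to-Sobolev inequality (\ref{gevdecay}) with $\beta$ in place of $\beta_c$ then gives, for every $\zeta>\beta$,
\[
\|\Lambda^\zeta u(t)\|_{L^2}\le C_\zeta (t-t_0)^{-(\zeta-\beta)/\kappa}\,\|u(t)\|_{Gv(t-t_0,\beta,2)}\le C_\zeta \|u(t_0)\|_{\h^\beta}(t-t_0)^{-(\zeta-\beta)/\kappa},
\]
which is the claimed bound.

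\emph{Main obstacle.} The principal difficulty is the tension between the coefficient-smallness hypothesis of Theorem~\ref{thm:4.5} (nominally all nonzero-order coefficients small) and the weaker assumption (\ref{eq:4.9}) here, which allows the leading coefficient $A$ to be of arbitrary size. Bridging this gap requires exploiting the defocusing sign $A>0$: the leading term $A\Delta(u^{2N-1})$ contributes the nonnegative dissipative term $(2N-1)A\int u^{2N-2}|\nabla u|^2$ in energy estimates, and plays an analogously stabilizing role in the Gevrey framework, so the fixed-point argument underlying Theorem~\ref{thm:4.5} can be pushed through after separating the leading and lower-order parts of $f$. The secondary technical challenge is Step~2, namely quantitatively coupling the Lyapunov decay with parabolic smoothing to obtain smallness in the stronger norm $\h^\beta$ rather than just in $\h^1$ or $L^2$.
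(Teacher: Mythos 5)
Your strategy is essentially the paper's: an $L^2$ energy estimate in which the leading contribution $-(2N-1)A\int u^{2N-2}|\nabla u|^2\,dx$ has a favorable sign and the lower-order part of $-\int f'(u)|\nabla u|^2\,dx$ is absorbed using (\ref{eq:4.9}) (the paper splits this integral over $\{|u|\le 1\}$, absorbed via Poincar\'e into $\|\Delta u\|_{L^2}^2$, and over $\{|u|>1\}$, where $|u|^{j-1}\le u^{2N-2}$ lets it be absorbed into the $A$-term), yielding $\frac{d}{dt}\|u\|_{L^2}^2+\gamma\|\Delta u\|_{L^2}^2\le 0$; one then deduces $\liminf_{t\ra\infty}\|u\|_{\h^{\beta}}=0$ for $\beta<2$ and concludes via Theorem \ref{thm:4.5} and (\ref{gevdecay}). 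Two points in your write-up need correction. First, the eventual smallness of $\|u(t_0)\|_{\h^{\beta}}$ is not obtained ``by choosing $\delta$ small'': smallness of $\delta$ is only what makes the differential inequality hold, and the transient time $t_0$ then comes from the time-integrability of $\|\Delta u\|_{L^2}^2$ together with interpolation against the uniform $L^2$ bound and Poincar\'e (a $\liminf$ statement, not a small absorbing ball); your Lyapunov functional and the parabolic-smoothing bootstrap are not needed for this. Second, the obstacle you flag --- that Theorem \ref{thm:4.5} nominally requires \emph{all} coefficients small while $A$ may be large --- is real (and the paper is silent on it), but the sign of $A$ cannot rescue the Gevrey fixed-point argument, since there the nonlinearity is majorized in absolute value and signs are lost; the correct bridge is that the contraction constant in the proof of Theorem \ref{thm:4.5} is $F_M'(2RC)$, which tends to $|a_1|$ as $R\ra 0$, so only the linear coefficient (controlled by (\ref{eq:4.9})) must be small once $R=\|u(t_0)\|_{\h^{\beta}}$ is small. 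With these repairs your argument matches the paper's.
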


\begin{remark}
The decay results presented in Theorems \ref{thm:3.2}, \ref{thm:3.3} and \ref{thm:4.6} are new to the best of our knowledge.
\end{remark}

\section{PROOFS OF MAIN RESULTS}

\subsection{Degree $n$ nonlinearity} 
We start with some preparatory results with the notation in Section 2.1. The following lemma is well-known, but we provide a proof for completeness and to give  precise estimates of the constants involved.

\begin{lemma}   \label{lem:2.1}
Let $\beta , t \ge 0 $ and $\kappa >0$. The kernels $k_1$ and $k_2$ corresponding to the symbols 
\[
m_1(\xi)=|\xi|^\beta e^{-t|\xi|_1^{\kappa}}, \quad m_2(\xi)=|\xi|^\beta e^{-t|\xi|^{\kappa}}
\]
are $L^1$ functions with
\[
\left\|k_i \right\|_{L^1} \le \frac{2C^{\beta/\kappa}\beta^{\beta/\kappa}}{t^{\beta/\kappa}}, \ \ i=1,2,
\]
where  $C$ is a constant independent of $\beta $ and $\kappa $.
\comments{is given in the proof below. In case $\kappa =2, p=2$, the operator norm of $k_i$ on $L^2$ is bounded by $\frac{C_{\beta}}{t^{\beta/\kappa}}$, where $C_{\beta} =\max\{1, \frac{\sqrt{n!}}{\sqrt{2}}\}$, with $n=\lceil x \rceil $ being the smallest integer
greater than or equal to $\beta$.}
\end{lemma}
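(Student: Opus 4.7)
The plan is to first use a parabolic scaling to reduce to the case $t=1$, then obtain the $L^1$ bound via a weighted $L^\infty$ estimate combined with an explicit Gamma-function computation, and finally apply Stirling's formula to extract the advertised $\beta^{\beta/\kappa}$ growth.

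To begin, substituting $\xi = t^{-1/\kappa}\eta$ in the defining Fourier integral gives $k_i(y,t)=t^{-(\beta+d)/\kappa}K_i(t^{-1/\kappa}y)$, where $K_i$ is the $t=1$ kernel. A change of variables in the $y$-integral then yields $\|k_i(\cdot,t)\|_{L^1}= t^{-\beta/\kappa}\|K_i\|_{L^1}$, so the claim reduces to $\|K_i\|_{L^1}\le 2C^{\beta/\kappa}\beta^{\beta/\kappa}$. As the Euclidean norm $|\xi|$ and the $\ell^1$ norm $|\xi|_1$ are equivalent on $\R^d$, the cases $i=1$ and $i=2$ differ only by dimension-dependent constants that can be absorbed into $C$; I will therefore focus on $m_2$ and on the function $h(\xi):=|\xi|^\beta e^{-|\xi|^\kappa}$.

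Next, I would fix an integer $M$ with $2M>d$ and apply the weighted estimate
\[
\|K_2\|_{L^1}\le\Bigl(\int\frac{dy}{(1+|y|^2)^M}\Bigr)\,\bigl\|(1+|y|^2)^M K_2\bigr\|_{L^\infty}.
\]
The identity $(1+|y|^2)^M K_2(y)=\mathcal{F}^{-1}\bigl((I-\Delta_\xi)^M h\bigr)(y)$ together with $\|\mathcal{F}^{-1}(g)\|_{L^\infty}\le(2\pi)^{-d}\|g\|_{L^1}$ reduces the problem to estimating $\|(I-\Delta_\xi)^M h\|_{L^1}$. Expanding by Leibniz and the chain rule, each resulting term is of the form $P(\beta,\kappa)\,|\xi|^{\beta-a}q(\xi)e^{-|\xi|^\kappa}$, where $P$ is a polynomial of degree at most $2M$ and $q$ is a polynomial in $|\xi|$ of bounded degree arising from derivatives hitting $e^{-|\xi|^\kappa}$. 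Passing to polar coordinates and substituting $u=r^\kappa$, every contribution reduces to $\kappa^{-1}\Gamma\bigl((\beta+d+j)/\kappa\bigr)$ for $j$ in a finite, $\beta$-independent range. Stirling's formula then yields $\Gamma((\beta+d+j)/\kappa)\le C_0^{\beta/\kappa}\beta^{\beta/\kappa}$ for an absolute $C_0$, and the polynomial-in-$\beta$ prefactors from $P$ are absorbed into the super-exponentially growing $\beta^{\beta/\kappa}$.

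The main obstacle I anticipate is the low-$\beta$ regime, where differentiating $|\xi|^\beta$ beyond order $\beta$ produces terms of the form $|\xi|^{\beta-|\alpha|}$ with a singularity at the origin that is integrable only while $|\alpha|<\beta+d$. This can be addressed either by replacing $(I-\Delta_\xi)^M$ with a fractional power $(I-\Delta_\xi)^s$ for $s$ chosen slightly above $d/2$ (so that every relevant fractional derivative of $|\xi|^\beta$ yields an integrable singularity), or by handling small $\beta$ separately by viewing $K_2$ as a continuous perturbation of the $\beta=0$ kernel, whose $L^1$ norm is uniformly controlled; the factor $2$ on the right-hand side of the claim provides the cushion needed to absorb the resulting constant in either argument.
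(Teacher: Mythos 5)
Your scaling reduction and the large-$\beta$ Gamma-function/Stirling computation are sound, and your route (a weighted $L^\infty$ bound on the kernel obtained by integrating by parts in the frequency variable) is genuinely different from the paper's, which instead decomposes the symbol dyadically, bounds each block by $C2^{j\beta}e^{-2^{j\kappa}}$ (citing \cite{Hmidi}), and sums over $j$. However, the obstacle you flag at the end is a genuine gap, and neither of your proposed fixes closes it. Your method requires $(I-\Delta_\xi)^M h\in L^1$ with $2M>d$, and near $\xi=0$ the worst terms behave like $|\xi|^{\beta-2M}$, which is locally integrable only for $\beta>2M-d>0$. Passing to a fractional power $(I-\Delta_\xi)^{s}$ with $s>d/2$ merely relaxes the constraint to $\beta>2s-d$, which still excludes a neighborhood of $\beta=0$ unless $s$ is chosen $\beta$-dependent; and the statement includes $\beta=0$. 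The alternative of treating small $\beta$ as a ``continuous perturbation'' of the $\beta=0$ kernel is not an argument: continuity of $\beta\mapsto K_\beta$ pointwise or in $L^\infty$ gives no control on $\|K_\beta\|_{L^1}$, and the $\beta=0$ kernel is itself outside the scope of your scheme whenever $\kappa$ is not an even integer, since derivatives of $e^{-|\xi|^{\kappa}}$ of order exceeding $\kappa$ are already singular at the origin. This origin singularity is exactly what the Littlewood--Paley decomposition is designed to neutralize: each block $\psi(2^{-j}\xi)m_2(\xi)$ is supported in an annulus $|\xi|\sim 2^j$, so one may integrate by parts freely block by block and only afterwards sum the bounds, which is how the paper obtains the $\sup_j 2^{j\beta}e^{-2^{j\kappa}}\sim (\beta/\kappa)^{\beta/\kappa}e^{-\beta/\kappa}$ behavior.

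A second gap concerns $m_1$. Equivalence of $|\cdot|$ and $|\cdot|_1$ as norms on $\R^d$ does not transfer $L^1$ kernel bounds between the corresponding Fourier multipliers; you must run the argument for $m_1$ directly. There your method meets an additional obstruction: $|\xi|_1$ is not differentiable on the coordinate hyperplanes $\{\xi_i=0\}$, so for $d\ge 2$ the operator $(I-\Delta_\xi)^M$ applied to $e^{-|\xi|_1^{\kappa}}$ produces distributional terms (derivatives of surface deltas along unbounded hyperplanes) that are not even finite measures, and the bound $\|\mathcal{F}^{-1}g\|_{L^\infty}\le(2\pi)^{-d}\|g\|_{L^1}$ no longer applies. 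So the global integration by parts cannot work as written for $i=1$; some frequency localization (as in the paper) or a separate device is needed there as well.
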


\begin{proof}
We only need to obtain the desired estimate for $m_{1}$ and $m_{2}$ with $t=1$. Then, time dependent bounds can be obtain by the scaling: $\xi \mapsto t^{\frac{1}{\kappa}} \xi$. To estimate $m_{i}$ at $t=1$, we use the Littlewood-Paley decomposition (see Appendix for the definition of this decomposition). Since the estimation is the same for $m_{1}$ and $m_{2}$, we only estimate $m_{2}$. We apply $\Delta_{j}$ to $m_{2}$ and take the $L^{1}$ norm.
\[
\left\|\Delta_{j}m_{2} \right\|_{L^{1}} \le C 2^{j\beta} \left\|\Delta_{j} e^{-|\xi|^{\kappa}} \right\|_{L^{1}} \le C 2^{j\beta} e^{-2^{j\kappa}},
\]
where the first and second inequalities can be found in \cite{Hmidi}. Then, we have
\eqn  \label{eq:2.4}
\left\|m_{2} \right\|_{L^{1}} \le C \sum_{j\in Z} \left\|\Delta_{j}m_{2}\right\|_{L^{1}}\leq C \sum_{j\in Z} 2^{j\beta} e^{-2^{j\kappa}}\leq 2C^{\beta/\kappa}\beta^{\beta/\kappa}
\een
which completes the proof.
\end{proof}

\comments{
We set the right-hand side of (\ref{eq:2.4}) as $c_{\beta,\kappa}$. The second assertion can be computed in the Fourier space using Plancheral theorem, and the operator norm is given by
\[
\max_{|\xi| \ge 0}|\xi|^{\beta} e^{-|\xi|^{2}} \leq \max\{1, \max_{|\xi| \ge 1}|\xi|^{\beta} e^{-|\xi|^{2}} \}.
\]
Since $e^{|\xi|^{2}} \ge 1+ \frac{1}{n!} |\xi|^{2n}$,
\[
\max_{|\xi| \ge 1}|\xi|^{\beta} e^{-|\xi|^{2}} \le  \frac{\sqrt{n!}}{\sqrt{2}},
\]
where $n$ is the smallest integer such that $n \ge \beta$.}

From this lemma and the definitions of the operators $T_i$ and the Gevrey norms,  we immediately have the following estimates concerning the Gevrey norms.

\begin{lemma} \label{lem:2.2}
Denote $C_{\beta}=C^{\beta}\beta^{\beta}$. For any $s,t, \beta' \ge 0$, we have
\[
\left\|T_i v\right\|_{Gv(s,\beta,p)} \le C\|v\|_{Gv(s,\beta+\ati,p)}, \quad \left\|e^{-\frac{1}{2}t \Lambda^{k}}v \right\|_{Gv(s,\beta + \beta',p)} \le \frac{C_{\beta^{'}}}{t^{\beta' /\kappa}}\|v\|_{Gv(s,\beta,p)},
\]
where  $\ati$ is as defined in (\ref{eq:2.1}).
\end{lemma}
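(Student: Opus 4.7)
The plan is to observe that both inequalities reduce to statements about Fourier multipliers acting on an auxiliary function $w := e^{cs^{1/\kappa}\Lambda_1}v$, since all of the operators involved ($T_i$, $\Lambda^\beta$, $e^{cs^{1/\kappa}\Lambda_1}$, $e^{-\frac{1}{2}t\Lambda^\kappa}$) are Fourier multipliers and therefore commute with one another. This reduces the Gevrey-norm statements to ordinary bounds in $\dot{\h}^\beta_p$, where the hypotheses (\ref{eq:2.1}) and Lemma \ref{lem:2.1} directly apply.

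For the first inequality, I would unwind the definition (\ref{eq:2.3}) to write
\[
\|T_i v\|_{Gv(s,\beta,p)} = \|e^{cs^{1/\kappa}\Lambda_1}\Lambda^\beta T_i v\|_{L^p}.
\]
Using that $T_i$ commutes with $\Lambda$ (and with the Fourier multiplier $e^{cs^{1/\kappa}\Lambda_1}$), I can move $T_i$ past $\Lambda^\beta$ and $e^{cs^{1/\kappa}\Lambda_1}$ to obtain $\|T_i w\|_{\dot{\h}^\beta_p}$ with $w = e^{cs^{1/\kappa}\Lambda_1}v$. The boundedness hypothesis (\ref{eq:2.1}) then yields $\|T_i w\|_{\dot{\h}^\beta_p} \le C\|w\|_{\dot{\h}^{\beta+\alpha_{T_i}}_p}$, which is exactly $C\|v\|_{Gv(s,\beta+\alpha_{T_i},p)}$ after re-inserting the definition.

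For the second inequality, the key point is that the operator $\Lambda^{\beta'}e^{-\frac{1}{2}t\Lambda^\kappa}$ is the Fourier multiplier with symbol $m_2(\xi) = |\xi|^{\beta'}e^{-\frac{1}{2}t|\xi|^\kappa}$, which is precisely (up to the factor $1/2$ absorbed into the time) the symbol handled in Lemma \ref{lem:2.1}. Writing
\[
\|e^{-\frac{1}{2}t\Lambda^\kappa}v\|_{Gv(s,\beta+\beta',p)} = \bigl\|\Lambda^{\beta'}e^{-\frac{1}{2}t\Lambda^\kappa}\bigl(e^{cs^{1/\kappa}\Lambda_1}\Lambda^\beta v\bigr)\bigr\|_{L^p},
\]
the operator in front of the $L^p$-function $e^{cs^{1/\kappa}\Lambda_1}\Lambda^\beta v$ is convolution with a kernel whose $L^1$ norm is bounded by $C_{\beta'}/t^{\beta'/\kappa}$ by Lemma \ref{lem:2.1}. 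Young's convolution inequality then gives the desired estimate, recognizing the remaining $L^p$ norm as $\|v\|_{Gv(s,\beta,p)}$.

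Neither step presents a serious obstacle: the work has already been done in (\ref{eq:2.1}) and Lemma \ref{lem:2.1}, and the only genuine point worth emphasizing is the commutativity of all these Fourier multipliers, which allows us to strip off $e^{cs^{1/\kappa}\Lambda_1}$ and reduce to the ordinary $L^p$/Sobolev setting. If anything requires a word of care, it is noting that $e^{cs^{1/\kappa}\Lambda_1}v$ should be understood as an auxiliary function whose $L^p$ (or $\dot{\h}^\alpha_p$) norm is assumed finite by the hypothesis that $v \in Gv(s,\beta,p)$; once that is granted, the arguments proceed in a single line each.
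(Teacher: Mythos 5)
Your proposal is correct and follows exactly the route the paper intends: the paper states that Lemma \ref{lem:2.2} follows ``immediately'' from Lemma \ref{lem:2.1}, the hypothesis (\ref{eq:2.1}), and the definition (\ref{eq:2.3}) of the Gevrey norm, and your write-up simply supplies the omitted details (commuting the Fourier multipliers to reduce to the plain $\hh^{\beta}_p$ setting, then applying (\ref{eq:2.1}) for the first bound and Lemma \ref{lem:2.1} with Young's inequality for the second). The one point you rightly flag --- that $e^{cs^{1/\kappa}\Lambda_1}v$ is treated as an auxiliary function with finite norm and that $T_i$ is taken to commute with $e^{cs^{1/\kappa}\Lambda_1}$ as well as with $\Lambda$ --- is exactly the implicit convention the paper uses for its multiplier operators $T_i$.
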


We also have the following result concerning the semigroup. To do so, we need the following elementary inequality:
\eqn \label{eq:2.5}
(x+y)^{\gamma} \le x^{\gamma} + y^{\gamma}, \quad x,y \ge 0, \quad \gamma \in [0,1].
\een
In the inequality above, by convention, we take $0^0 =1$ when $\gamma =0$ and   $\min(x,y)=0$.

\begin{lemma} \label{lem:2.3}
Let $0 \le s \le t < \infty$. Let
\[
E=e^{-c\left((t-s)^{1/\kappa}+s^{1/\kappa}- t^{1/\kappa}\right)\Lambda_1}.
\]
The operator $E$ is either the identity operator or is a Fourier multiplier with an $L^1$ kernel and its  $L^1$ norm is bounded independent of $s, t$.
\end{lemma}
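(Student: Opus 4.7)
The plan is to reduce the claim to an explicit formula for the kernel of $E$ using the fact that the symbol factorizes across coordinates.

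First I would check that the exponent is non-positive. Since $\kappa > 1$, we have $1/\kappa \in (0,1)$, so the sub-additivity inequality (\ref{eq:2.5}) applied with $\gamma = 1/\kappa$, $x = t-s$, $y = s$ gives
\[
t^{1/\kappa} = (t-s+s)^{1/\kappa} \le (t-s)^{1/\kappa} + s^{1/\kappa}.
\]
Thus $a := c\left((t-s)^{1/\kappa} + s^{1/\kappa} - t^{1/\kappa}\right) \ge 0$, and $E$ is a Fourier multiplier with symbol $e^{-a |\xi|_1}$. If $a = 0$ (which occurs at $s=0$ or $s=t$), then $E$ is the identity, handling one of the two alternatives in the statement.

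Next, for $a > 0$ I would exploit the fact that the $\ell^1$ norm splits as $|\xi|_1 = \sum_{j=1}^d |\xi_j|$; this is precisely why the Gevrey multiplier is taken with respect to $\Lambda_1$ rather than $\Lambda$ in the definition (\ref{eq:2.3}). Consequently the symbol factorizes,
\[
e^{-a|\xi|_1} = \prod_{j=1}^d e^{-a|\xi_j|},
\]
and the kernel is the tensor product of one-dimensional Poisson kernels:
\[
\mathcal{F}^{-1}\!\left(e^{-a|\xi|_1}\right)(x) = \prod_{j=1}^d \frac{1}{\pi}\,\frac{a}{a^2 + x_j^2}.
\]
Each factor is non-negative and has $L^1(\R)$ norm equal to $1$ (by the standard computation $\int_\R a/(\pi(a^2+x^2))\,dx = 1$).

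By Tonelli's theorem, the $L^1(\R^d)$ norm of the kernel equals the product of these one-dimensional norms, hence equals $1$, which is independent of $s,t$ (indeed, independent of $a$ as long as $a > 0$). This also shows $E$ acts as convolution with a positive probability density, so its operator norm on $L^p$ is bounded by $1$ for every $p \in [1,\infty]$. The only mildly delicate point is the correct handling of the degenerate case $a=0$, but this is built into the statement via the ``identity operator'' alternative; there is no serious obstacle in the argument, the essential observation being simply that the choice of $|\cdot|_1$ in the definition of $\Lambda_1$ decouples the $d$-dimensional multiplier into a product of one-dimensional Poisson multipliers whose $L^1$ norms are universally bounded.
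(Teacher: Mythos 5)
Your proposal is correct and follows essentially the same route as the paper's own proof: verify $a\ge 0$ via the subadditivity inequality (\ref{eq:2.5}), split into the identity case $a=0$ and the case $a>0$, and recognize the kernel as a tensor product of one-dimensional Poisson kernels whose $L^1$ norms are uniformly bounded (indeed equal to $1$). The only difference is that you make the normalization $\|k\|_{L^1}=1$ and the resulting $L^p$ contractivity explicit, which the paper leaves as ``bounded by a constant independent of $a$.''
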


\begin{proof}
Note that since $\kappa \ge 1$, by (\ref{eq:2.5}) we have $t^{1/\kappa} \le s^{1/\kappa} + (t-s)^{1/\kappa}$. Thus, $E = e^{-a\Lambda_1}$ where 
\[
a = c \left\{(t-s)^{1/\kappa}+s^{1/\kappa}- t^{1/\kappa} \right\} \ge 0.
\]
In case $a=0$, $E$ is the identity operator while if $a>0$, $E$ is a Fourier multiplier with symbol $m_E(\xi) = \prod_{i=1}^d e^{-a|\xi_i|}$. Thus, the kernel of $E$ is given by the product of one dimensional Poisson kernels $\prod_{i=1}^d\frac{a}{\pi(a^2+x_i^2)}$. The $L^1$ norm of this kernel is bounded by a constant independent of $a$.
\end{proof}

\begin{lemma}  \label{lem:2.4}
Let $\kappa \ge 1$. The operator $\tilde{E}=e^{-\frac{1}{2}a\Lambda^{\kappa}+a^{1/\kappa}c\Lambda_1}$ is a Fourier multiplier which acts as a bounded operator on all $L^p$ spaces ($1 < p < \infty$) and its operator norm is uniformly bounded with respect to $a \ge 0$.
\end{lemma}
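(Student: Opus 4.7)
The plan is as follows. First, via the dilation $\eta = a^{1/\kappa}\xi$ in the Fourier variable, the symbol $m_a(\xi) := e^{-\frac{1}{2}a|\xi|^\kappa + a^{1/\kappa}c|\xi|_1}$ of $\tilde{E}$ satisfies $m_a(\xi) = m_1(a^{1/\kappa}\xi)$. The corresponding convolution kernels are then mutual dilates, so the $L^p\to L^p$ operator norms of the multipliers coincide for all $a>0$ (and the case $a=0$ is trivial, as $\tilde E = I$). It therefore suffices to establish $L^p$-boundedness, for $1<p<\infty$, of the multiplier with symbol $m(\xi) := e^{-\frac{1}{2}|\xi|^\kappa + c|\xi|_1}$.

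The obstruction to applying the H\"ormander--Mikhlin multiplier theorem directly is that $|\xi|_1 = \sum_i|\xi_i|$ fails to be smooth across the coordinate hyperplanes. To circumvent this, I would decompose $\R^d$ into the $2^d$ closed orthants $\Omega_\epsilon := \{\xi : \epsilon_i\xi_i \geq 0\}$ indexed by $\epsilon \in \{\pm 1\}^d$, on which $|\xi|_1 = \epsilon\cdot\xi$ is linear. Let $R_\epsilon$ denote the Fourier projection onto $\Omega_\epsilon$; written as a tensor product of one-dimensional half-plane projections $(I \pm iH_i)/2$ (with $H_i$ the 1D Hilbert transform in the $i$-th variable), $R_\epsilon$ is bounded on $L^p$ for $1<p<\infty$. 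Since $m$ agrees on $\Omega_\epsilon$ with the globally smooth symbol $m^{(\epsilon)}(\xi) := e^{-\frac{1}{2}|\xi|^\kappa + c\,\epsilon\cdot\xi}$, I would write
\[
\tilde{E} \;=\; \sum_{\epsilon \in \{\pm 1\}^d} R_\epsilon \,T^{(\epsilon)},
\]
where $T^{(\epsilon)}$ is the multiplier with symbol $m^{(\epsilon)}$, reducing the problem to the $L^p$-boundedness of each $T^{(\epsilon)}$.

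For this final step, the hypothesis $c|\xi|_1 \leq \frac{1}{4}|\xi|$ implies $|c\,\epsilon\cdot\xi| \leq c|\xi|_1 \leq \frac{1}{4}|\xi|$ for all $\xi$, and combined with the elementary inequality $|\xi| \leq |\xi|^\kappa + 1$ (valid since $\kappa\geq 1$) gives the pointwise domination $|m^{(\epsilon)}(\xi)| \leq C\,e^{-\frac{1}{4}|\xi|^\kappa}$. Differentiating $m^{(\epsilon)}=e^{f}$, $f(\xi) = -\frac{1}{2}|\xi|^\kappa + c\,\epsilon\cdot\xi$, via the chain rule and using $|\partial^\beta(|\xi|^\kappa)| \leq C_\beta|\xi|^{\kappa-|\beta|}$ on $\R^d\setminus\{0\}$, I would verify the H\"ormander--Mikhlin estimates $|D^\alpha m^{(\epsilon)}(\xi)| \leq C_\alpha|\xi|^{-|\alpha|}$ on $\R^d\setminus\{0\}$: the polynomial factors appearing in $D^\alpha e^f$ are absorbed by the super-exponential decay of $e^f$ at infinity and remain consistent with the target bound near the origin. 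The H\"ormander--Mikhlin theorem then yields $L^p$-boundedness of $T^{(\epsilon)}$ with constants independent of $\epsilon$; summing over the $2^d$ orthants and undoing the scaling produces the uniform bound on $\tilde{E}$ for all $a\ge 0$. The main technical point I expect to be delicate is the Mikhlin derivative estimate for $m^{(\epsilon)}$ uniformly down to the origin when $\kappa$ is not an even integer, where $|\xi|^\kappa$ has only finite regularity; handling this requires a careful induction on $|\alpha|$ to balance the $|\xi|^{\kappa-|\beta|}$ singularities from iterated derivatives of $f$ against the target $|\xi|^{-|\alpha|}$ bound.
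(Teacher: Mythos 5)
Your proof is correct, and at its core it follows the same route as the paper: reduce to $a=1$ by the dilation $\xi\mapsto a^{1/\kappa}\xi$ (the paper does this implicitly by writing the symbol as $e^{-\frac{1}{2}|a^{1/\kappa}\xi|^{\kappa}+c|a^{1/\kappa}\xi|_1}$) and then invoke the H\"ormander--Mikhlin multiplier theorem, using the choice $c|\xi|_1<\frac{1}{4}|\xi|$ so that the symbol still decays like $e^{-\frac{1}{4}|\xi|^\kappa}$. The difference is that the paper stops there, asserting boundedness and exponential decay of the symbol and citing H\"ormander's theorem, whereas you explicitly confront the fact that $|\xi|_1$ is not smooth across the coordinate hyperplanes, so the multiplier theorem does not apply directly to $m_{\tilde E}$. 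Your orthant decomposition $\tilde E=\sum_{\epsilon}R_\epsilon T^{(\epsilon)}$, with $R_\epsilon$ a tensor product of one-dimensional Riesz projections $(I\pm iH_i)/2$ (bounded on $L^p$ only for $1<p<\infty$, which is exactly the range in the statement) and $m^{(\epsilon)}(\xi)=e^{-\frac{1}{2}|\xi|^\kappa+c\,\epsilon\cdot\xi}$ globally smooth away from the origin, supplies the missing step; it is the same device the paper itself uses later, in the proof of Lemma \ref{lem:2.6}, via the operators $K_{\vec\alpha}$ and $Z_{a,\vec\alpha,\vec\beta}$. Your verification of the Mikhlin bounds $|D^\alpha m^{(\epsilon)}(\xi)|\le C_\alpha|\xi|^{-|\alpha|}$ near the origin for non-even $\kappa$ also goes through: each block $\beta_j$ in the Fa\`a di Bruno expansion contributes at worst $|\xi|^{\kappa-|\beta_j|}$, and since $\kappa\ge 1>0$ the resulting exponent is bounded below by $-|\alpha|$ on $|\xi|\le 1$, while the factor $e^{-\frac{1}{4}|\xi|^\kappa}$ absorbs everything at infinity. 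In short, your argument is a more careful version of the paper's one-line proof and repairs its elision of the non-smoothness of $\Lambda_1$.
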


\begin{proof}
When $a =0$, $\tilde{E}$ is the identity operator. On the other hand, if $a>0$, then $\tilde{E}$ is  Fourier multiplier with symbol $m_{\tilde{E}}(\xi)=e^{-\frac{1}{2}|a^{1/\kappa}\xi|^{\kappa} + c|a^{1/\kappa}\xi|_1}$. Since $m_{\tilde{E}}(\xi)$ is uniformly bounded for all $\xi$ and decays exponentially for $|\xi|>>1$ the claim follows from Hormander's multiplier theorem (see \cite{stein}).
\end{proof}

\begin{lemma}  \label{lem:2.5}
Let $u_0 \in \hh^{\beta_0}_p$ for some $\beta_0 \in \R$ and $0< T\le \infty, \ \beta >0$. Denote
\eqn \label{eq:2.6}
M= \sup_{0<t<T} t^{\beta/\kappa} \left\|e^{-t\Lambda^{\kappa}}u_0\right\|_{Gv(t,\beta_0+\beta,p)}.
\een
Then $M \le C_{\beta}\|u_0\|_{\hh^{\beta_0}_p}$ and $M \ra 0$ as $T \ra 0$.
\end{lemma}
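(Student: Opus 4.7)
The plan is to estimate $\|e^{-t\Lambda^\kappa}u_0\|_{Gv(t,\beta_0+\beta,p)}$ by splitting the semigroup $e^{-t\Lambda^\kappa} = e^{-t\Lambda^\kappa/2}\cdot e^{-t\Lambda^\kappa/2}$ and then pairing one half with the Gevrey exponential, which is bounded uniformly by Lemma \ref{lem:2.4}, and using the other half to absorb the extra $\beta$ derivatives at the cost of $t^{-\beta/\kappa}$ via Lemma \ref{lem:2.2}.

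Concretely, by definition,
\[
t^{\beta/\kappa}\|e^{-t\Lambda^\kappa}u_0\|_{Gv(t,\beta_0+\beta,p)} = t^{\beta/\kappa}\bigl\|\bigl(e^{ct^{1/\kappa}\Lambda_1-\tfrac{1}{2}t\Lambda^\kappa}\bigr)\bigl(e^{-\tfrac{1}{2}t\Lambda^\kappa}\Lambda^{\beta_0+\beta}u_0\bigr)\bigr\|_{L^p},
\]
using that $\Lambda$ commutes with $\Lambda_1$ and with $\Lambda^\kappa$. By Lemma \ref{lem:2.4} (applied with $a=t$), the first factor is bounded on $L^p$ with a norm independent of $t$. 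By the second inequality in Lemma \ref{lem:2.2} (taking $s=0$ there), the second factor is bounded in $L^p$ by $C_\beta t^{-\beta/\kappa}\|u_0\|_{\hh^{\beta_0}_p}$. Multiplying by $t^{\beta/\kappa}$ cancels the singular factor and yields the uniform bound $M \le C_\beta\|u_0\|_{\hh^{\beta_0}_p}$.

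For the vanishing statement $M\to 0$ as $T\to 0$, the plan is a density/approximation argument. First, for any $v$ lying in the dense subspace $\hh^{\beta_0+\beta}_p\cap\hh^{\beta_0}_p$ (e.g.\ a Schwartz function, or a function whose Fourier support is compact and bounded away from the origin), one has, repeating the splitting above,
\[
t^{\beta/\kappa}\|e^{-t\Lambda^\kappa}v\|_{Gv(t,\beta_0+\beta,p)} \le C\,t^{\beta/\kappa}\|e^{-\tfrac12 t\Lambda^\kappa}v\|_{\hh^{\beta_0+\beta}_p} \le C\,t^{\beta/\kappa}\|v\|_{\hh^{\beta_0+\beta}_p},
\]
which tends to $0$ as $t\to 0$. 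Second, given $\eps>0$, choose such a $v$ with $\|u_0-v\|_{\hh^{\beta_0}_p} < \eps/(2C_\beta)$; applying the uniform bound already established to $u_0-v$ gives
\[
\sup_{0<t<T}t^{\beta/\kappa}\|e^{-t\Lambda^\kappa}u_0\|_{Gv(t,\beta_0+\beta,p)} \le \tfrac{\eps}{2} + \sup_{0<t<T}t^{\beta/\kappa}\|e^{-t\Lambda^\kappa}v\|_{Gv(t,\beta_0+\beta,p)},
\]
and the second term is less than $\eps/2$ for $T$ sufficiently small.

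The only mildly delicate point is the density assertion for $\hh^{\beta_0+\beta}_p$ inside $\hh^{\beta_0}_p$; this is standard once one uses functions with Fourier support in an annulus (mollification plus Littlewood--Paley truncation), thereby avoiding any issue at the origin for negative $\beta_0$ and at infinity for positive $\beta_0$. Everything else is a direct bookkeeping application of Lemmas \ref{lem:2.2} and \ref{lem:2.4} already in hand.
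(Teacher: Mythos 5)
Your proposal is correct and follows essentially the same route as the paper: the uniform bound comes from the identical splitting $e^{-t\Lambda^\kappa}=e^{-\frac12 t\Lambda^\kappa}e^{-\frac12 t\Lambda^\kappa}$ combined with Lemmas \ref{lem:2.4} and \ref{lem:2.2}, and the vanishing as $T\to 0$ is obtained by the same approximation of $u_0$ by an element of $\hh^{\beta_0+\beta}_p$ together with the already-established uniform bound applied to the difference. Your extra remark on how to realize the density (Littlewood--Paley truncation away from the origin) is a sensible elaboration of a point the paper leaves implicit.
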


\begin{proof}
Due to Lemmas \ref{lem:2.2}  and \ref{lem:2.4}, for any $0<t\le T$, we have
\begin{equation*}
 \begin{split}
 \left\|e^{-t\Lambda^{\kappa}}u_0 \right\|_{Gv(t,\beta_0+\beta,p)} &=  \left\|e^{ct^{1/\kappa}\Lambda_1-\frac{1}{2}t\Lambda^{\kappa}}\Lambda^{\beta}e^{-\frac{1}{2}t\Lambda^{\kappa}}\Lambda^{\beta_0}u_0 \right\|_{L^p} \le C \left\|\Lambda^{\beta}e^{-\frac{1}{2}t\Lambda^{\kappa}}\Lambda^{\beta_0}u_0 \right\|_{L^p} \\
 & \le C_{\beta}t^{-\beta/\kappa} \left\|\Lambda^{\beta_0}u_0\right\|_{L^p} =C_{\beta}t^{-\beta/\kappa}\|u_0\|_{\hh^{\beta_0}_p},
 \end{split}
\end{equation*}
which proves the first assertion. Concerning the second, note that there exists $u_0' \in \hh^{\beta+\beta_0}_p$ such that $\left\|u_0-u_0' \right\|_{\hh^{\beta_0}_p} < \epsilon $ for any $\epsilon >0$. Proceeding as above, and using the fact that $u_0' \in \hh^{\beta+\beta_0}_p$, we obtain
\begin{gather*}
 \begin{split}
 t^{\beta/\kappa} \left\|e^{-t\Lambda^{\kappa}}u_0 \right\|_{Gv(t,\beta_0+\beta,p)} & \le t^{\beta/\kappa} \left\|e^{-t\Lambda^{\kappa}}(u_0-u_0') \right\|_{Gv(t,\beta_0+\beta,p)}+ t^{\beta/\kappa} \left\|e^{-t\Lambda^{\kappa}}u_0' \right\|_{Gv(t,\beta_0+\beta,p)} \\
 & \le C\epsilon + Ct^{\beta/\kappa}\|u_0'\|_{\hh^{\beta+\beta_0}_p}.
 \end{split}
\end{gather*}
Letting $t \ra 0$ and noting that $\epsilon >0$ is arbitrary, the claim follows.
\end{proof}

We need the following versions of the Kato-Ponce inequality \cite{Kato2}. For $\Gamma = \Lambda $ or $\Gamma = (I+\Lambda)$, we have
\eqn \label{eq:2.7}
\left\|\Gamma^\beta(\phi \psi) \right\|_{L^p}  \le C\left[\left\|\Gamma^\beta \phi\right\|_{L^{p_1}} \left\|\psi\right\|_{L^{q_1}} + \left\|\phi \right\|_{L^{p_2}} \left\|\Gamma^\beta \psi \right\|_{L^{q_2}}\right],
\een
where $\beta \ge 0$ and $1 < p, p_{i}, q_{i} < \infty $ are such that $ \frac{1}{p}=\frac{1}{p_{1}}+\frac{1}{q_{1}}=\frac{1}{p_{2}}+\frac{1}{q_{2}}$. For completeness, a proof is provided in the Appendix. The following lemma, which shows that the Kato-Ponce inequality holds for Gevrey norms, is crucial to estimate the nonlinear term in (\ref{eq:1.1}).

\begin{lemma}  \label{lem:2.6}
Let $t, \beta \ge 0$ and $1 < p, p_{i}, q_{i} < \infty $ such that $ \frac{1}{p}=\frac{1}{p_{1}}+\frac{1}{q_{1}}=\frac{1}{p_{2}}+\frac{1}{q_{2}}$. Then, we have the following estimate:
\[
\left\|fg\right\|_{Gv(t,\beta,p)} \le C \left[ \left\|f \right\|_{Gv(t,\beta,p_1)} \left\|g \right\|_{Gv(t,0,q_1)} +\left\|f \right\|_{Gv(t,0,p_2)} \left\|g \right\|_{Gv(t,\beta,q_2)} \right].
\]
\end{lemma}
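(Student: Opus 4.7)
The plan is to reduce the Gevrey Kato-Ponce inequality to the standard $L^p$ version (\ref{eq:2.7}) by moving the Gevrey exponential inside the two factors. The algebraic engine is the observation that although the symbol $e^{cs^{1/\kappa}|\xi|_1}$ does not split across a convolution variable, on each closed octant $Q_\epsilon = \{\xi \in \R^d : \operatorname{sign}(\xi_j) = \epsilon_j\}$, $\epsilon \in \{+,-\}^d$, the norm $|\xi|_1$ reduces to the linear form $\epsilon \cdot \xi$, whose exponential \emph{does} factor: $e^{cs^{1/\kappa}\epsilon\cdot\xi} = e^{cs^{1/\kappa}\epsilon\cdot(\xi-\eta)} \cdot e^{cs^{1/\kappa}\epsilon\cdot\eta}$. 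To exploit this, I would introduce the Fourier projections $P_\epsilon = \prod_{j=1}^{d}\tfrac{1}{2}(I + i\epsilon_j H_j)$ onto $Q_\epsilon$, where $H_j$ is the one-dimensional Hilbert transform in $x_j$; each $P_\epsilon$ is $L^p$-bounded for $1<p<\infty$ by the classical M.~Riesz theorem, and $\sum_\epsilon P_\epsilon = I$. I would also introduce the (unbounded) Fourier multipliers $\Theta^\epsilon_s$ with symbol $e^{cs^{1/\kappa}\epsilon\cdot\xi}$. Two algebraic identities will play a central role: (i) $P_\epsilon e^{cs^{1/\kappa}\Lambda_1} = P_\epsilon \Theta^\epsilon_s$, since the symbols agree on $Q_\epsilon$; (ii) $\Theta^\epsilon_s(fg) = \Theta^\epsilon_s f \cdot \Theta^\epsilon_s g$, by the factorization above.

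The second step is to establish the $L^p$ bounds $\|\Theta^\epsilon_s u\|_{L^p} \le C\|u\|_{Gv(s,0,p)}$ and $\|\Lambda^\beta \Theta^\epsilon_s u\|_{L^p} \le C\|u\|_{Gv(s,\beta,p)}$. Decomposing $u = \sum_\eta P_\eta u$, I would use that on $Q_\eta$ one has $\epsilon\cdot\xi = |\xi|_1 - 2\sum_{j \in B(\epsilon,\eta)}|\xi_j|$ with $B(\epsilon,\eta) = \{j : \epsilon_j \ne \eta_j\}$, so $\Theta^\epsilon_s P_\eta u$ is simply the convolution of $P_\eta(e^{cs^{1/\kappa}\Lambda_1} u)$ with $\cf^{-1}(e^{-2cs^{1/\kappa}\sum_{j \in B}|\xi_j|})$, a tensor product of one-dimensional Poisson kernels (and Diracs in the remaining coordinates) whose total mass equals $1$. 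Young's inequality and the $L^p$-boundedness of $P_\eta$ then yield the bound after summing over the $2^d$ octants. With this in hand, the conclusion is assembled as
\[
\|fg\|_{Gv(s,\beta,p)} = \Big\|\sum_\epsilon P_\epsilon \Lambda^\beta \Theta^\epsilon_s(fg)\Big\|_{L^p} \le C \sum_\epsilon \|\Lambda^\beta(\Theta^\epsilon_s f \cdot \Theta^\epsilon_s g)\|_{L^p},
\]
where (i) and (ii) have been used; applying the standard Kato-Ponce inequality (\ref{eq:2.7}) to each summand, together with the $L^p$ bounds on $\Theta^\epsilon_s$, finishes the proof.

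The main obstacle is the rigorous justification of the multiplicativity identity (ii): the symbol $e^{cs^{1/\kappa}\epsilon\cdot\xi}$ grows exponentially in the half-space $\epsilon\cdot\xi > 0$, so $\Theta^\epsilon_s$ is unbounded, and the factors $\Theta^\epsilon_s f$, $\Theta^\epsilon_s g$ are a priori only distributions, not necessarily in any $L^p$ space individually. I would resolve this by first verifying (ii) on a dense subset of Schwartz functions with compactly supported Fourier transforms, where all relevant Fourier integrals converge absolutely, and then extending by approximation using the $L^p$ bounds above, which control the factors by the finite Gevrey norms of $f$ and $g$ appearing on the right-hand side of Lemma~\ref{lem:2.6}.
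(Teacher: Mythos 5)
Your proof is correct and is essentially the paper's own argument in different packaging: your octant projections $P_\epsilon$ are the paper's tensor products $K_{\vec{\alpha}}$ of one-dimensional half-line frequency projections, your $\Theta^\epsilon_s$ decomposed over the octants of the factor's frequency support reproduces exactly the paper's operators $Z_{a,\vec{\alpha},\vec{\beta}}$ built from the Poisson multipliers $L_{a,-1}$ with symbol $e^{-2a|\xi|}$ (uniformly bounded on $L^p$ in $a$), and both arguments finish by commuting $\Lambda^{\beta}$ through these multipliers and applying the classical Kato--Ponce inequality (\ref{eq:2.7}). The density issue you raise at the end is treated in the paper at the same level of rigor, namely by reducing to Schwartz-class $\phi,\psi$.
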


\begin{proof}
For notational convenience, denote
\begin{gather}  \label{phipsidef}
a = ct^{1/\kappa}, \quad \phi = e^{a\Lambda_1}f, \quad  \psi = e^{a\Lambda_1}g.
\end{gather}
By a density argument, it will be enough to prove the result for $\phi , \psi \in {\cal S}(\R^d)$. Note that, using the Fourier inversion formula,
we have
\begin{gather*}
\begin{split}
B_a(f,g) &:=e^{a\Lambda_1}(f \cdot g) =e^{a\Lambda_1} \left((e^{-a\Lambda_1}\phi)\cdot (e^{-a\Lambda_1}\psi)\right) \\
& = \frac{1}{(2\pi)^d}\int\!\int e^{i x \cdot(\xi+\eta)}e^{a \left(\|\xi + \eta\|_1-\|\xi\|_1-\|\eta\|_1\right)}\hat{f}(\xi)\hat{g}(\eta)\,
d\xi\, d\eta.
\end{split}
\end{gather*}
Recall that for a vector $\xi=(\xi_1,\cdots,\xi_d)$, we denoted $\|\xi\|_1=\sum_{i=1}^d |\xi_i|$. For $\xi=(\xi_1,\cdots,\xi_d), \eta = (\eta_1, \cdots , \eta_d)$, we now split the domain of integration of the above integral into sub-domains depending on the sign of $\xi_j, \eta_j$ and $\xi_j+\eta_j$. In order to do so, we introduce the operators acting on one variable  (see page 253 in \cite{LR}) by
\[
K_1f = \frac{1}{2\pi} \int_0^{\infty} e^{i x \xi}\hat{f}(\xi)\, d\xi, \quad K_{-1}f = \frac{1}{2\pi} \int_{-\infty}^{0} e^{i x \xi}\hat{f}(\xi)\, d\xi.
\]
Let the operators $L_{a,-1}$ and $L_{a,1}$ be defined by
\[
L_{a,1}f = f,\quad L_{a,-1}f = \frac{1}{2\pi}\int_{\R} e^{i x\xi}e^{-2a|\xi|}\hat{f}(\xi)\, d\xi.
\]
For $ \vec{\alpha}=(\alpha_1,\cdots , \alpha_d), \vec{\beta}  =(\beta_1,\cdots , \beta_d) \in \{-1,1\}^d$, denote
the operator
\[
Z_{a,\vec \alpha , \vec \beta} = K_{\beta_1}L_{t,\alpha_1 \beta_1} \otimes \cdots \otimes K_{\beta_d}L_{t,\alpha_d\beta_d} \quad  \mbox{and} \quad K_{\vec \alpha} = k_{\alpha_1} \otimes \cdots \otimes K_{\alpha_d}.
\]
The above tensor product  means that the $j-$th operator in the tensor product acts on the $j-$th variable of the function $f(x_1,\cdots,x_d)$. A tedious (but elementary) calculation now yields the following identity:
\[
B_a(f,g) = \sum_{\vec \alpha , \vec \beta , \vec \gamma \in \{-1,1\}^d}K_{\vec \alpha} \left(\left(Z_{a, \vec \alpha , \vec \beta}f \right) \cdot \left(Z_{a, \vec \alpha , \vec \gamma }g\right)\right).
\]
We now note that the operators $K_{\vec \alpha}, Z_{a, \vec \alpha , \vec \beta}$  are linear combinations of Fourier multipliers (including Hilbert transform)  and the identity operator, and commute with $\Lambda_1$ and $\Lambda$. Moreover, they are bounded linear operators on $L^p, 1 < p < \infty$ and the corresponding operator norm of $Z_{a,\vec \alpha, \vec \beta}$ is bounded independent of $a\ge 0$. Thus, we have
\begin{equation*}
 \begin{split}
 \left\|fg\right\|_{Gv(t,\beta,p)} &= \left\|\Lambda^\beta e^{a\Lambda_1}(fg)\right\|_{L^p}= \left\|\Lambda^\beta  e^{a\Lambda_1}\left(\left(e^{-a\Lambda_1}\phi \right) \left(e^{-a\Lambda_1}\psi \right)\right) \right\|_{L^p}\\
 & = \left\|\Lambda^\beta \sum_{\vec \alpha, \vec \beta, \vec \gamma \in \{-1,1\}^d}K_{\vec \alpha} \left( \left(Z_{a,\vec \alpha , \vec \beta}\phi\right)  \left(Z_{a,\vec \alpha , \vec \gamma}\psi \right) \right) \right\|_{L^p}\\
 & = \left\| \sum_{\vec \alpha, \vec \beta, \vec \gamma \in \{-1,1\}^d}K_{\vec \alpha}\Lambda^\beta \left (\left(Z_{a,\vec \alpha , \vec \beta}\phi \right)  \left(Z_{a,\vec \alpha , \vec \gamma}\psi \right) \right) \right\|_{L^p} \\
 & \le C \left\|\Lambda^\beta \left( \left(Z_{a,\vec \alpha , \vec \beta}\phi \right) \left(Z_{a,\vec \alpha , \vec \gamma}\psi \right) \right) \right\|_{L^p} \\
 &  \le C\left[ \left\|\Lambda^\beta Z_{a,\vec \alpha , \vec \beta}\phi \right\|_{L^{p_1}} \left\|Z_{a,\vec \alpha , \vec \gamma}\psi) \right\|_{L^{q_1}}
 + \left\|Z_{a,\vec \alpha , \vec \beta}\phi \right\|_{L^{p_2}} \left\|\Lambda^{\beta}Z_{a,\vec \alpha , \vec \gamma}\psi) \right\|_{L^{q_2}}\right] \\
 & \le C\left[ \left\| Z_{a,\vec \alpha , \vec \beta}\Lambda^\beta\phi \right\|_{L^{p_1}} \left\|Z_{a,\vec \alpha , \vec \gamma}\psi \right\|_{L^{q_1}}
 + \left\|Z_{a,\vec \alpha , \vec \beta}\phi \right\|_{L^{p_2}} \left\|Z_{a,\vec \alpha , \vec \gamma}\Lambda^\beta\psi \right\|_{L^{q_2}}\right] \\
 & \le  C \left[ \left\|f \right\|_{Gv(t,\beta,p_1)} \left\|g \right\|_{Gv(t,0,q_1)} +\left\|f \right\|_{Gv(t,0,p_2)} \left\|g \right\|_{Gv(t,\beta,q_2)} \right],
 \end{split}
\end{equation*}
where to obtain the above relations, we used the commutativity of $\Lambda^\beta $ with the operators
$K_{\vec \alpha}$ and $Z_{a,\vec \alpha, \vec \beta}$, the $L^p$-boundedness of these operators with uniformly bounded operator norms  with respect to $a\ge 0$ and (\ref{eq:2.7}).
\end{proof}

\begin{lemma} \label{lem:2.7}
Let $s \ge 0,\, 1 < p < \infty$ and $0< \alpha, \ \beta < \frac{d}{p}$ and  $\alpha + \beta > \frac{d}{p}$. We have
\eqn \label{eq:2.8}
\left\|fg\right\|_{Gv(s,\gamma,p)} \le C\left\|f\right\|_{Gv(s,\alpha,p)} \left\|g\right\|_{Gv(s,\beta,p)}, \quad \gamma = \alpha + \beta - \frac{d}{p}.
\een
\end{lemma}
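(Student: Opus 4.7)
The plan is to replicate the framework set up in the proof of Lemma \ref{lem:2.6}, but invoke the classical homogeneous Sobolev product estimate instead of Kato--Ponce in the final step.

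First, I would work with Schwartz functions by density (since the right-hand side only involves $L^p$ norms through $\Lambda^\alpha$ and $\Lambda^\beta$, standard approximation will recover the general case). Setting $a = cs^{1/\kappa}$, $\phi = e^{a\Lambda_1}f$ and $\psi = e^{a\Lambda_1}g$ as in \eqref{phipsidef}, I would repeat verbatim the Fourier-splitting argument of Lemma \ref{lem:2.6} to obtain the identity
\[
e^{a\Lambda_1}(fg) = \sum_{\vec \alpha,\vec \beta,\vec \gamma \in \{-1,1\}^d} K_{\vec \alpha}\bigl((Z_{a,\vec \alpha,\vec \beta}\phi)(Z_{a,\vec \alpha,\vec \gamma}\psi)\bigr).
\]
Because $\Lambda$ commutes with every $K_{\vec\alpha}$ and every $Z_{a,\vec\alpha,\vec\beta}$, and because these operators are linear combinations of Fourier multipliers that are bounded on $L^p$ uniformly in $a\ge 0$, the problem reduces (after applying $\Lambda^\gamma$ and the triangle inequality over the finite sum) to proving a single inequality of the form
\[
\bigl\|\Lambda^\gamma(FG)\bigr\|_{L^p} \le C\,\|\Lambda^\alpha F\|_{L^p}\,\|\Lambda^\beta G\|_{L^p},
\]
where $F = Z_{a,\vec\alpha,\vec\beta}\phi$ and $G = Z_{a,\vec\alpha,\vec\gamma}\psi$.

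The step above is precisely the classical fractional Leibniz/Sobolev-multiplication inequality on homogeneous potential spaces under the hypothesis $0<\alpha,\beta<d/p$ and $\gamma=\alpha+\beta-\tfrac{d}{p}>0$. I would deduce it from the Kato--Ponce inequality \eqref{eq:2.7} combined with Sobolev embedding: choose $\tfrac{1}{p_1}=\tfrac{\beta}{d}$ and $\tfrac{1}{q_1}=\tfrac{1}{p}-\tfrac{\beta}{d}$ (so $\tfrac{1}{p_1}+\tfrac{1}{q_1}=\tfrac{1}{p}$), and similarly $\tfrac{1}{p_2}=\tfrac{1}{p}-\tfrac{\alpha}{d}$, $\tfrac{1}{q_2}=\tfrac{\alpha}{d}$. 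Since $\alpha-\gamma = \tfrac{d}{p}-\beta$ and $\beta-\gamma = \tfrac{d}{p}-\alpha$, the homogeneous Sobolev embeddings give
\[
\|\Lambda^\gamma F\|_{L^{p_1}} \lesssim \|\Lambda^\alpha F\|_{L^p}, \qquad \|G\|_{L^{q_1}} \lesssim \|\Lambda^\beta G\|_{L^p},
\]
and symmetrically for the other pair, which together with Kato--Ponce yields the desired product estimate.

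Finally, going back through the decomposition and using that $Z_{a,\vec\alpha,\vec\beta}$ commutes with $\Lambda^\alpha$ and is $L^p$-bounded uniformly in $a$, we get $\|\Lambda^\alpha F\|_{L^p} \le C\|\Lambda^\alpha \phi\|_{L^p} = C\|f\|_{Gv(s,\alpha,p)}$, and similarly for $G$ and $\psi$. Summing over the finitely many tuples $(\vec\alpha,\vec\beta,\vec\gamma)$ completes the proof. The main obstacle is really just locating/verifying the homogeneous Sobolev product estimate; the hypothesis $\alpha,\beta<d/p$ is exactly what makes the two Sobolev embeddings above subcritical, and $\alpha+\beta>d/p$ ensures $\gamma>0$ so that the left-hand side is an honest norm. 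All the rest of the work is already encoded in the machinery developed for Lemma \ref{lem:2.6}.
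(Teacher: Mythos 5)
Your proof is correct and is essentially the paper's argument: the paper simply invokes Lemma \ref{lem:2.6} as a black box with exponents $\tfrac{1}{r_1}=\tfrac{\beta}{d}$, $\tfrac{1}{r_2}=\tfrac{1}{p}-\tfrac{\beta}{d}$, $\tfrac{1}{r_3}=\tfrac{1}{p}-\tfrac{\alpha}{d}$, $\tfrac{1}{r_4}=\tfrac{\alpha}{d}$ and then applies the homogeneous Sobolev embedding (\ref{eq:2.10}) to the Gevrey norms, which are exactly the exponents and embeddings you use. The only difference is organizational --- you unfold the $K_{\vec\alpha}$, $Z_{a,\vec\alpha,\vec\beta}$ decomposition from the proof of Lemma \ref{lem:2.6} and apply Kato--Ponce plus Sobolev to the transformed functions rather than to the assembled Gevrey norms --- which changes nothing of substance.
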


\begin{proof}
Applying Lemma \ref{lem:2.6}, we have
\eqn  \label{eq:2.9}
\left\|fg\right\|_{Gv(s,\gamma,p)} \le C \left( \left\|f\right\|_{Gv(s,\gamma,r_1)} \left\|g\right\|_{Gv(s,0,r_2)} + \left\|f\right\|_{Gv(s,0,r_3)} \left\|g\right\|_{Gv(s,\gamma,r_4)}\right),
\een
where $\frac{1}{r_1}+\frac{1}{r_2}=\frac{1}{r_3}+\frac{1}{r_4} = \frac{1}{p}$. We now need the Sobolev inequalities (\cite{LR})
\eqn \label{eq:2.10}
\|f\|_{L^q} \le C \left\|\Lambda^{\delta}f\right\|_{L^p}, \quad 0 \le \delta < \frac{d}{p}, \quad \delta = \frac{d}{p} - \frac{d}{q}.
\een
By taking $r_i, i=1,\cdots,4$ in (\ref{eq:2.9}) with
\[
\frac{1}{r_1}= \frac{1}{p}- \frac{\alpha-\gamma}{d}, \quad \frac{1}{r_2}=\frac{1}{p}-\frac{\beta}{d}, \quad \frac{1}{r_3} = \frac{1}{p} - \frac{\alpha}{d}, \quad \frac{1}{r_4}= \frac{1}{p} - \frac{\beta-\gamma}{d},
\]
and applying the Sobolev inequalities given above, we obtain (\ref{eq:2.8}).
\end{proof}

An iterative application of this lemma yields the following:
\begin{equation} \label{eq:2.11}
\begin{split}
& \left\|\prod_{i=1}^n f_i \right\|_{Gv(s,\gamma,p)} \le  C\prod_{i=1}^n \left\|f_i\right\|_{Gv(s,\alpha_i,p)}, \\
& \gamma = \sum_{i=1}^n\alpha_i - \frac{(n-1)d}{p} > 0, \ \  \max_{1\leq i \leq n}\{\alpha_i\} < \frac{d}{p}.
\end{split}
\end{equation}

\subsubsection*{\bf Proof of Theorem \ref{thm:2.8}.}
Let  $\gamma = \beta_0 + \beta$ for adequate $\beta >0$ to be specified later and define the Banach space
\[
E = \left\{ u\in C([0,T); \hh^{\beta_0}_p):   \|u\|_E := \max\{\|u\|_{E_1}, \|u\|_{E_2} \} < \infty  \right\},
\]
where
\[
\|u\|_{E_1}:=\sup_{s \in (0,T)}\|u(s)\|_{Gv(s,\beta_0,p)}, \quad \|u\|_{E_2}:=s^{\beta/\kappa}\|u(s)\|_{Gv(s,\gamma,p)}.
\]
Additionally,
\[
{\cal E} = \left\{u \in E: \left\|u-e^{-t\Lambda^{\kappa}}u_{0} \right\|_E \le M \right\},
\]
where $M$ is as in (\ref{eq:2.6}). It is easy to see that ${\cal E}$ is a compete metric space; it is a closed and bounded subset of the Banach space E. For $S$  as  in (\ref{eq:1.2}),  note that 
\eqn \label{eq:2.15}
(Su)(t) = e^{-t\Lambda^\kappa}u_0+(Bu)(t),  \quad (Bu)(t) = \int_0^t e^{-(t-s)\Lambda^\kappa}G(u(s))\, ds.
\een
We need to prove an inequality of the form
\eqn   \label{eq:2.16}
\left\|(Bu)\right\|_{E} \le CT^{\mu} \|u\|_{E_{2}}^n
\een
for $\mu \ge 0$ satisfying $\mu =0$ if and only if $\beta_0 = \beta_c$. Note that this implies
\[
\left\|Su-e^{-t\Lambda^\kappa}u_0\right\|_{E} \le  CT^{\mu}M^n, \quad \left\|Su-Sv \right\|_{E}\le nCT^{\mu}M^{n-1}\|u-v\|_E.
\]
The first inequality  immediately  follows from (\ref{eq:2.16}) while the second follows by noting
\eqn  \label{eq:2.17}
G(u)-G(v)= \sum_{i =1}^nT_0F \left(T_1u, \cdots, T_{i-1}u, T_i(u-v),T_{i+1}v, \cdots,T_n(v) \right).
\een
If $\beta_0 > \beta_c$, then $\mu >0$ and  $S$ is contractive in ${\cal E}$ provided $T$ is  sufficiently small. On the other hand, in case $\beta_0=\beta_c$ and $\mu=0$, in view of Lemma \ref{lem:2.5}, we  can either choose  $T$ small in case the initial data is arbitrary or the initial data sufficiently small if $T=\infty$,  to reach the same conclusion. We now proceed to estimate $\|Bu\|_{E_2}$, the estimate for  $\|Bu\|_{E_1}$ being similar. Note that
\begin{equation} \label{eq:2.18}
 \begin{split}
 & \left\|e^{-(t-s)\Lambda^\kappa}G(u(s)) \right\|_{Gv(t,\gamma,p)} = \left\|e^{ct^{1/\kappa} \Lambda_1}\Lambda^{\gamma}e^{-(t-s)\Lambda^\kappa}G(u(s)) \right\|_{L^p} \\
 & \le C \left\|e^{c(t-s)^{1/\kappa}\Lambda_1}e^{cs^{1/\kappa}\Lambda_1}e^{-(t-s)\Lambda^\kappa}\Lambda^{\gamma}G(u(s)) \right\|_{L^p}  \\
 & = C \left\|e^{c(t-s)^{1/\kappa}\Lambda_1-\frac{1}{2}(t-s)\Lambda^\kappa}e^{cs^{1/\kappa}\Lambda_1}\Lambda^{\gamma} e^{-\frac{1}{2}(t-s)\Lambda^\kappa}G(u(s)) \right\|_{L^p} \\
 & \le C \left\|\Lambda^{\gamma}e^{cs^{1/\kappa}\Lambda_1} e^{-\frac{1}{2}(t-s)\Lambda^\kappa}G(u(s)) \right\|_{L^p}.
 \end{split}
\end{equation}
By (\ref{eq:2.11}) with $\alpha_i = \gamma - \ati$ and Lemma \ref{lem:2.2} with $\beta' = \sum_{i=0}^n \ati + (n-1) \frac{d}{p}-(n-1)\gamma$, 
\begin{equation} \label{eq:2.19}
 \begin{split}
 & t^{\beta/\kappa} \left\|(Bu)(t) \right\|_{Gv(t,\gamma,p)} \\
 & \le C \|u\|_{E_2}^nt^{\beta/\kappa} \int_0^t\frac{1}{s^{(n\beta)/\kappa}}\frac{1}{(t-s)^{\frac{1}{\kappa}\left[\sum_{i=0}^n \ati + (n-1)  \frac{d}{p}-(n-1)\gamma\right]}} ds \\
 & \le C\|u\|_{E_2}^nt^{\mu}, \quad \mu = \frac{(n-1)(\beta_0 - \beta_c)}{\kappa}.
 \end{split}
\end{equation}
In order to apply (\ref{eq:2.11}) and Lemma \ref{lem:2.2}, and to ensure the finiteness of the integral in (\ref{eq:2.19}), we need
\[
\beta' = \sum_{i=0}^n \ati + (n-1)\left(\frac{d}{p} -\gamma \right) \ge 0, \ \  \gamma - \min_{1\le i \le n}\ati < \frac{d}{p}, \ \  n\gamma - \sum_{i=1}^n\ati - (n-1)\frac{d}{p} >0.
\]
For the convergence of the integral in (\ref{eq:2.19}), we also need $n\beta < \kappa$ and $\beta' < \kappa$. A choice of $\beta>0$ satisfying all these conditions can be made provided the conditions on the parameters stated in the theorem hold.

\subsection{Analytic Nonlinearity}
Here we follow the notation in subsection 2.2. We once again start with some auxiliary results.

\begin{lemma}   \label{lem:4.1}
Let $\beta > \frac{d}{p}$. For any $t \ge 0$, we have
\[
\|f g\|_{Gv(t,\beta,p)} \le C\|f\|_{Gv(t,\beta,p)} \|g\|_{Gv(t,\beta,p)}.
\]
\end{lemma}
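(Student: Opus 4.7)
The plan is to establish an inhomogeneous analogue of Lemma \ref{lem:2.6}, namely
\[
\|fg\|_{Gv(t,\beta,p)} \le C \left[\|f\|_{Gv(t,\beta,p_1)}\|g\|_{Gv(t,0,q_1)} + \|f\|_{Gv(t,0,p_2)}\|g\|_{Gv(t,\beta,q_2)}\right],
\]
for the Gevrey norm defined by (\ref{eq:4.5}) (based on the inhomogeneous potential $(I+\Lambda)^\beta$), and then to collapse the right-hand side using the Sobolev embedding $\|h\|_{L^\infty}\le C\|h\|_{\h^\beta_p}$ from (\ref{eq:4.4}). This two-step structure is the same as the homogeneous case but with $\Lambda$ replaced throughout by $I+\Lambda$.

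Setting $a=\frac{1}{2}t^{1/\kappa}$, $\phi = e^{a\Lambda_1}f$ and $\psi = e^{a\Lambda_1}g$, I would repeat verbatim the decomposition carried out in the proof of Lemma \ref{lem:2.6}. Writing
\[
e^{a\Lambda_1}(fg) = \sum_{\vec\alpha,\vec\beta,\vec\gamma \in\{-1,1\}^d} K_{\vec\alpha}\bigl((Z_{a,\vec\alpha,\vec\beta}\phi)\cdot(Z_{a,\vec\alpha,\vec\gamma}\psi)\bigr),
\]
and observing that all the operators $K_{\vec\alpha}$, $Z_{a,\vec\alpha,\vec\beta}$ are linear combinations of Fourier multipliers which (i) are bounded on every $L^r$, $1<r<\infty$, uniformly in $a\ge 0$, and (ii) commute with $I+\Lambda$, I apply $(I+\Lambda)^\beta$, use the Kato-Ponce inequality (\ref{eq:2.7}) with $\Gamma=I+\Lambda$, and translate the resulting bounds back through the $Z_{a,\cdot,\cdot}$ operators to obtain the displayed bilinear estimate.

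To deduce the lemma, I choose exponents $p_1=q_2=p$ and $q_1=p_2=q$ with $q\in(p,\infty)$ finite but large enough that $\beta > d/p - d/q$; such $q$ exists because $\beta > d/p$. The Sobolev inequality (\ref{eq:2.10}) (applied to $(I+\Lambda)^\beta$) then gives
\[
\|h\|_{Gv(t,0,q)} = \|e^{a\Lambda_1}h\|_{L^q}
\le C\|(I+\Lambda)^\beta e^{a\Lambda_1}h\|_{L^p} = C\|h\|_{Gv(t,\beta,p)},
\]
using the commutation of $(I+\Lambda)^\beta$ with $e^{a\Lambda_1}$. Applying this to both factors in the bilinear estimate yields the claimed algebra property.

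The only delicate point, and what I view as the main obstacle, is that the Kato-Ponce inequality (\ref{eq:2.7}) is restricted to finite Lebesgue exponents, so I cannot simply take $q=\infty$ and directly invoke (\ref{eq:4.4}). The remedy, as described above, is to exploit the strict inequality $\beta > d/p$ to choose $q$ finite but arbitrarily large, thereby preserving the Sobolev embedding while staying inside the admissible range of (\ref{eq:2.7}). Beyond this, the argument is a direct transplantation of Lemma \ref{lem:2.6} to the inhomogeneous setting and so requires no new harmonic-analytic input.
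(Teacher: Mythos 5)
Your proof is correct and follows essentially the same route as the paper: the paper's proof of Lemma \ref{lem:4.1} likewise reuses the $K_{\vec\alpha}$, $Z_{a,\vec\alpha,\vec\beta}$ decomposition from Lemma \ref{lem:2.6} with $\Lambda$ replaced by $I+\Lambda$, and then collapses the resulting bilinear bound via the embedding $\h^{\beta}_p\hookrightarrow L^\infty$ of (\ref{eq:4.4}). The only difference is that the paper applies the Kato--Ponce inequality directly at the endpoint $q_1=q_2=\infty$ (which the stated version (\ref{eq:2.7}) formally excludes), whereas you keep a large finite $q$ and use the $L^q$ Sobolev embedding; your variant is slightly more careful on this point but changes nothing of substance.
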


\begin{proof}
For notational convenience, denote 
\[
a = ct^{1/\kappa}, \quad \phi = e^{a\Lambda_1}f, \quad \psi = e^{a\Lambda_1}g.
\]
Proceeding as in the proof of Lemma \ref{lem:2.6}  (with the notation there in) and using (\ref{eq:4.4}), we obtain that 
\begin{equation*}
 \begin{split}
 & \|f g\|_{Gv(t,\beta,p)} \\
 &\le C\left[ \left\| Z_{a,\vec \alpha , \vec \beta}(I+\Lambda)^\beta\phi \right\|_{L^{p}} \left\|Z_{a,\vec \alpha , \vec \gamma}\psi \right\|_{L^{\infty}} + \left\|Z_{a,\vec \alpha , \vec \beta}\phi \right\|_{L^{\infty}} \left\|Z_{a,\vec \alpha , \vec \gamma}(I+\Lambda)^\beta\psi \right\|_{L^{p}}\right] \\
 & \le C \left[ \left\|(I+\Lambda)^\beta\phi \right\|_{L^{p}} \left\|(I+\Lambda)^{\beta}Z_{a,\vec \alpha , \vec \gamma}\psi \right\|_{L^p} + \left\|(I+\Lambda)^\beta Z_{a,\vec \alpha , \vec \beta}\phi \right\|_{L^p} \left\|(I+\Lambda)^\beta\psi \right\|_{L^{p}}\right] \\
 & \le C\|\phi\|_{\h^{\beta}_p}\|\psi\|_{\h^{\beta}_p} \le C\|f \|_{Gv(t,\beta,p)}\|g\|_{Gv(t,\beta,p)},
 \end{split}
\end{equation*}
where we have used the fact that $(I+\Lambda)$ commutes with $Z_{a,\vec \alpha , \vec \beta}$.
\end{proof}

\begin{lemma}   \label{lem:4.2}
Let $\beta > \frac{d}{p}, s \ge 0$ and  $u, v$ be such that
\[
\max_{1\leq i \leq n} \left\{ \left\|T_i u\right\|_{Gv(s, \beta , p)}, \left\|T_i v\right\|_{Gv(s,\beta,p)} \right\} \le R.
\]
For notational simplicity, denote $F(T_1u, \cdots,T_n u)=F(u)$.  Then, there exists a  constant $C$ independent of $s, u, v$ such that
\begin{equation} \label{eq:4.6}
 \begin{split}
 & \left\|F(u) \right\|_{Gv(s, \beta,p)} \le F_M(RC),  \\
 & \left\|F(u)-F(v)\right\|_{Gv(s, \beta,p)}\le CF_M'(RC) \max_{1\leq i \leq n} \left\|T_i(u-v) \right\|_{Gv(s, \beta,p)}.
 \end{split}
\end{equation}
\end{lemma}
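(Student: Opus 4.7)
The plan is to expand $F$ as a power series in the $T_i u$ variables, then bound each monomial using Lemma \ref{lem:4.1} (the Banach algebra property of $Gv(s,\beta,p)$ for $\beta > d/p$), and finally sum everything against the series defining $F_M$ (respectively $F_M'$).

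For the first estimate, I would start from
\[
F(u) = \sum_{k \in \Z_+^n} a_k (T_1 u)^{k_1}\cdots (T_n u)^{k_n}
\]
and apply Lemma \ref{lem:4.1} iteratively $|k|-1$ times to obtain
\[
\left\|(T_1u)^{k_1}\cdots (T_n u)^{k_n}\right\|_{Gv(s,\beta,p)} \le C^{|k|-1}\prod_{i=1}^n \|T_i u\|_{Gv(s,\beta,p)}^{k_i} \le C^{|k|-1} R^{|k|},
\]
where $C$ is the constant in Lemma \ref{lem:4.1}. Summing, the triangle inequality gives
\[
\|F(u)\|_{Gv(s,\beta,p)} \le \sum_k |a_k| C^{|k|-1} R^{|k|} \le F_M(RC)
\]
(absorbing the harmless $1/C$ factor), and convergence is guaranteed by $RC < R_M$.

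For the Lipschitz-type bound, the key identity is the telescoping decomposition
\[
\prod_{i=1}^n x_i^{k_i} - \prod_{i=1}^n y_i^{k_i} = \sum_{i=1}^n \Bigl(\prod_{j<i} y_j^{k_j}\Bigr)\bigl(x_i^{k_i}-y_i^{k_i}\bigr)\Bigl(\prod_{j>i} x_j^{k_j}\Bigr),
\]
combined with $x_i^{k_i}-y_i^{k_i}=(x_i-y_i)\sum_{\ell=0}^{k_i-1}x_i^\ell y_i^{k_i-1-\ell}$. Applied in the Gevrey algebra with $x_i=T_i u$, $y_i=T_i v$, Lemma \ref{lem:4.1} together with the hypothesis gives
\[
\left\|(T_i u)^{k_i}-(T_i v)^{k_i}\right\|_{Gv(s,\beta,p)} \le C^{k_i-1}k_i R^{k_i-1}\|T_i(u-v)\|_{Gv(s,\beta,p)},
\]
and then for the full multi-index $k$
\[
\Bigl\|\prod_i(T_iu)^{k_i}-\prod_i(T_iv)^{k_i}\Bigr\|_{Gv(s,\beta,p)} \le C^{|k|-1}|k|R^{|k|-1}\max_{1\le i\le n}\|T_i(u-v)\|_{Gv(s,\beta,p)}.
\]
Summing against $|a_k|$ yields
\[
\|F(u)-F(v)\|_{Gv(s,\beta,p)} \le \Bigl(\sum_k |a_k||k|(RC)^{|k|-1}\Bigr)\max_i\|T_i(u-v)\|_{Gv(s,\beta,p)} = F_M'(RC)\max_i\|T_i(u-v)\|_{Gv(s,\beta,p)},
\]
up to the universal constant $C$, which is exactly the claimed bound; the series defining $F_M'(RC)$ converges for the same reason as before, since $F_M'$ is analytic on the ball of radius $R_M$.

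The only step requiring real care is the telescoping decomposition and the careful tracking of the Banach-algebra constants so that they assemble into powers of $RC$ rather than some ungainly expression; once the combinatorics are set up correctly, the result follows purely from Lemma \ref{lem:4.1} and the definition of $F_M$. There is no hard analytic obstacle, the assumption $\beta>d/p$ being precisely what makes the multiplication algebra work.
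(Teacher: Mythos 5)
Your proof is correct and follows essentially the same route as the paper: the first bound is the iterated Banach-algebra property of Lemma \ref{lem:4.1} summed against the series for $F_M$, and the Lipschitz bound comes from the telescoping decomposition of differences of monomials (the power-series analogue of (\ref{eq:2.17})), which is precisely what the paper's terse proof invokes. You have merely written out the combinatorics that the paper leaves implicit, and your tracking of the constants into powers of $RC$ is consistent with the stated conclusion.
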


\begin{proof}
The first inequality in (\ref{eq:4.6}) is an immediate consequence of Lemma \ref{lem:4.1}. Concerning the second,  proceeding as in (\ref{eq:2.17}), we have
\begin{equation*}
 \begin{split}
 \left\|F(u) - F(v)\right\|_{Gv(s, \beta,p)} & \le \sum_{j \in \Z_+^d}|a_j||j|C^{|j|}R^{|j|-1} \max_{1\le i \le n} \left\|T_i(u-v) \right\|_{Gv(s,\beta,p)}  \\
 & \le CF_M'(RC)\max_{1\le i \le n} \left\|T_i(u-v) \right\|_{Gv(s,\beta,p)}.
 \end{split}
\end{equation*}
This completes the proof.
\end{proof}

\subsubsection*{\bf Proof of Theorem \ref{thm:4.3}.}
We recall $\alpha= \displaystyle \max_{1 \leq i \leq n}\{\ati\} $, and set
\[
E = \Big\{u \in C\big((0,T); \h^{\beta_0+\alpha} \big) : \|u\|_E = \sup_{s \in (0,T)}\|u(s)\|_{Gv(s, \alpha, p)}  \le  2R \Big\}.
\]
Proceeding as in the proof of Theorem \ref{thm:2.8} and using Lemma \ref{lem:4.2}, we obtain
\eqn \label{eq:4.7}
\left\|e^{-(t-s)\Lambda^\kappa }G(u(s)) \right\|_{Gv(t, \alpha, p)} \le \frac{C}{(t-s)^{(\alpha_{T_0}+ \alpha)/\kappa}} F_M(2RC).
\een
Thus, we obtain that 
\[
\left\|\int_0^{t} e^{-(t-s)\Lambda^\kappa }G(u(s))\, ds \right\|_E \le C F_M(2RC) T^{1 - \frac{\alpha_{T_0}+\alpha}{\kappa}}.
\]
Similarly, using the second inequality in (\ref{eq:4.6}), we can also obtain
\[
\left\|Su - Sv\right\|_E \le C F_M'(2RC)T^{1 - \frac{\alpha_{T_0}+ \alpha }{\kappa}}  \|u-v\|_E.
\]
The proof is now completed along the lines of the proof of Theorem \ref{thm:2.8}.

\comments{\begin{remark}
Our result is better than \cite{Ferrari} in the sense that the nonlinearity $G$ in (\ref{eq:4.3}) is more general and
we cover all $p$, not simply $p=2$.
\end{remark}
}

\subsection{The periodic case}
 In this case, $\Lambda $ has a minimum eigenvalue, denoted by $\lambda_0 >0$, and the Fourier spectrum of all periodic functions with space average zero is contained in the complement of a ball with radius $\lambda_0$. Thus, following \cite{Danchin, Hmidi}, we can show that
\[
\left\|e^{-a\Lambda^\kappa }u_0 \right\|_{L^p} \le Ce^{-a\lambda_0^\kappa}\|u_0\|_{L^{p}}, \quad a>0.
\]
This fact can be easily proven for $p=2$ using Plancheral theorem. Thus, instead of (\ref{eq:4.7}), we can obtain
\begin{equation} \label{eq:4.8}
 \begin{split}
 & \left\|e^{-(t-s)\Lambda^\kappa }G(u(s)) \right\|_{Gv(t, \beta+ \alpha, p)} =  \left\|e^{-\frac{1}{4}(t-s)\Lambda^\kappa }e^{-\frac{3}{4}(t-s)\Lambda^\kappa }G(u(s)) \right\|_{Gv(t, \beta+\alpha, p)}\\
 & \le e^{-\frac12\lambda_0^\kappa (t-s)} \left\|e^{-\frac{3}{4}(t-s)\Lambda^\kappa }G(u(s))\right\|_{Gv(t, \beta+\alpha, p)}  \le \frac{Ce^{-\frac12\lambda_0^\kappa (t-s)}}{(t-s)^{(\alpha_{T_0}+ \alpha )/\kappa}} F_M(2RC).
 \end{split}
\end{equation}
Using now the elementary fact (see for instance Proposition 7.5 in \cite{biswas2007gevrey})
\[
\sup_{t>0} \int_0^t e^{-b(t-s)}s^{-a}ds  \le C < \infty, \quad b>0, \quad 0<a<1,
\]
it follows that
\[
\left\|Su\right\|_{E} \le CF_M(2RC) \quad  \mbox{and} \quad  \left\|Su-Sv\right\|_E \le CF_M'(2RC) \|u - v\|_E.
\]
The proof of Theorem \ref{thm:4.5} follows  from the above discussion and by noting that
\[
\lim_{R \ra 0}F_M(2RC)=0, \quad \limsup_{R\ra 0} F_M'(2RC) \le \delta.
\]
We can thus ensure that $S$ is contractive  for $T=\infty$ provided $\|\bu_0\|_{\hh^{\beta +\alpha}_p}$ and $\delta $ are small.

\subsection{Equation in Fourier space.}
As before, we start with some estimates on Gevrey norms.

\begin{lemma}   \label{lem:4.7}
For any $s \ge 0$ and $\beta =0$, we have
\[
\|u \ast v\|_{Gv(s)} \le C\|u\|_{Gv(s)}\|v\|_{Gv(s)}.
\]
\end{lemma}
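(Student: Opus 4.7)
The plan is to reduce the claim to the classical $L^{1}$ algebra property under convolution, by exploiting the sub-additivity of $|\xi|$ to split the Gevrey weight. First, I would invoke the pointwise bound stated just before equation (\ref{eq:4.12}), namely
\[
|(u \ast v)(\xi)| \le C \bigl(|u| \ast |v|\bigr)(\xi),
\]
with $C = \max_{i,j,k}|b_{ijk}|$, so that without loss of generality one may assume $u,v\ge 0$ (the constant $C$ in the conclusion will absorb this factor).

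Next, the key observation is the elementary triangle inequality $|\xi| \le |\xi - \eta| + |\eta|$, which upon multiplying by $\tfrac{1}{2}s^{1/\kappa}\ge 0$ and exponentiating yields
\[
e^{\frac{1}{2}s^{1/\kappa}|\xi|} \;\le\; e^{\frac{1}{2}s^{1/\kappa}|\xi - \eta|}\, e^{\frac{1}{2}s^{1/\kappa}|\eta|}.
\]
Inserting this into the definition (\ref{eq:4.12}) of $\|\cdot\|_{Gv(s)}$ (with $\beta=0$) and applying Fubini's theorem, I would estimate
\begin{align*}
\|u \ast v\|_{Gv(s)} &\le C \int_{\R^{d}} e^{\frac{1}{2}s^{1/\kappa}|\xi|} \int_{\R^{d}} |u(\xi-\eta)|\,|v(\eta)|\, d\eta \, d\xi \\
&\le C \int_{\R^{d}}\!\!\int_{\R^{d}} \bigl(e^{\frac{1}{2}s^{1/\kappa}|\xi-\eta|}|u(\xi-\eta)|\bigr)\bigl(e^{\frac{1}{2}s^{1/\kappa}|\eta|}|v(\eta)|\bigr)\, d\eta\, d\xi \\
&= C\, \|u\|_{Gv(s)}\,\|v\|_{Gv(s)},
\end{align*}
where the last line follows by the change of variables $\xi \mapsto \xi - \eta$ in the $\xi$-integral and by Fubini.

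There is no real obstacle here: the only ingredients are the vector-valued convolution bound quoted from the paper, the sub-additivity of the Euclidean norm, and Fubini/translation invariance of Lebesgue measure. The argument is in fact the standard proof that the weighted $L^{1}$ space with a sub-multiplicative weight $e^{a|\xi|}$ is a Banach algebra under convolution, and it sets up the analogous (and slightly more delicate, because of the factor $|\xi|^{\beta}$) estimate that will be needed for general $\beta > 0$.
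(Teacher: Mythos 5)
Your proposal is correct and follows essentially the same argument as the paper's proof: both rest on the pointwise convolution bound, the triangle inequality $|\xi|\le|\xi-\eta|+|\eta|$ to split the exponential weight, and Fubini with translation invariance to factor the double integral. Your write-up merely spells out the Fubini/change-of-variables step that the paper leaves implicit.
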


\begin{proof}
Let $\gamma = \frac{1}{2}s^{1/\kappa}$. By triangle inequality $|\xi| \le |\xi - \eta|+|\eta|$,
\begin{equation*}
 \begin{split}
 \|u \ast v\|_{Gv(s)} & \le C \int_{\R^{d_1}} \int_{\R^{d_1}} e^{\gamma |\xi|}|u|(\xi-\eta)|v|(\eta)d\xi d\eta\\
 & \le C  \int_{\R^{d_1}} \int_{\R^{d_1}} e^{\gamma |\xi - \eta|}|u|(\xi-\eta)e^{\gamma |\eta|}|v|(\eta)d\xi d\eta \le C\|u\|_{Gv(s)}\|v\|_{Gv(s)},
 \end{split}
\end{equation*}
which completes the proof.
\end{proof}

Lemma \ref{lem:4.7} tells that for any $s\ge0$, $Gv(s)$ is a Banach algebra. Therefore, this space can be used to estimate analytic functions $G(u)$ as follows.

\begin{lemma}   \label{lem:4.8}
Let $G$ be as in (\ref{eq:4.3}) and  $u, v$ be such that
\[
\max_{1\leq i \leq n} \left\{\left\|T_iu\right\|_{Gv(s, \beta , p)}, \left\|T_iv\right\|_{Gv(s,\beta,p)} \right\} \le R.
\]
For notational simplicity, denote $F(T_1u, \cdots,T_nu)=F(u)$.  Then, there exists a  constant $C$ independent of $s, u, v$ such that
\begin{equation} \label{eq:4.13}
 \begin{split}
 & \left\|F(u)\right\|_{Gv(s)} \le C_1F_M(RC),\\
 & \left\|F(u)-F(v)\right\|_{Gv(s)}\le C F^{'}_{M}(RC) \max_{1\leq i \leq n} \left\|T_i(u-v)\right\|_{Gv(s)}.
 \end{split}
\end{equation}
\end{lemma}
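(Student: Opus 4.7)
The plan is to follow the same scheme as Lemma \ref{lem:4.2}, with the Banach algebra property of $L^1$ under convolution (Lemma \ref{lem:4.7}) taking the place of the Kato--Ponce--type inequality used there. First, I would iterate Lemma \ref{lem:4.7} to obtain the multilinear bound
\[
\|f_1 \ast \cdots \ast f_m\|_{Gv(s)} \le C^{m-1} \prod_{j=1}^{m} \|f_j\|_{Gv(s)},
\]
valid for any $s \ge 0$ and any $m \ge 1$. This is proven by an immediate induction on $m$ using the associativity of convolution and the inequality $\|u \ast v\|_{Gv(s)} \le C \|u\|_{Gv(s)} \|v\|_{Gv(s)}$.

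Next, recall that in the Fourier formulation, $F(T_1 u, \ldots, T_n u)$ is a series of \emph{convolution} monomials, i.e.\
\[
F(u) = \sum_{k \in \Z_+^n} a_k \, (T_1 u)^{\ast k_1} \ast \cdots \ast (T_n u)^{\ast k_n}.
\]
Applying the multilinear convolution bound to each monomial of total degree $|k| = k_1 + \cdots + k_n$ and using the hypothesis $\|T_i u\|_{Gv(s)} \le R$, we get
\[
\left\| (T_1 u)^{\ast k_1} \ast \cdots \ast (T_n u)^{\ast k_n} \right\|_{Gv(s)} \le C^{|k|-1} R^{|k|} \le \tfrac{1}{C}(CR)^{|k|}.
\]
Summing with the absolute Fourier coefficients $|a_k|$ yields
\[
\|F(u)\|_{Gv(s)} \le \sum_{k \in \Z_+^n} |a_k| (CR)^{|k|} \cdot C^{-1} = C_1 F_M(RC),
\]
which is the first inequality of (\ref{eq:4.13}).

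For the second inequality, I would use the telescoping identity
\[
\prod_{i=1}^n \alpha_i^{k_i} - \prod_{i=1}^n \beta_i^{k_i} = \sum_{j=1}^{|k|} P_j \ast (\alpha_{i(j)} - \beta_{i(j)}) \ast Q_j,
\]
where $P_j, Q_j$ are convolution products of at most $|k|-1$ factors, each of which is one of the $\alpha_i$'s or $\beta_i$'s. Substituting $\alpha_i = T_i u$, $\beta_i = T_i v$ and applying the multilinear bound again, each telescope term contributes at most $C^{|k|-1} R^{|k|-1}$ times $\max_{1 \le i \le n}\|T_i(u-v)\|_{Gv(s)}$. Since there are exactly $|k|$ such terms, summing over $k$ gives
\[
\|F(u) - F(v)\|_{Gv(s)} \le \sum_{k} |a_k|\, |k|\, C^{|k|-1} R^{|k|-1} \max_{i} \|T_i(u-v)\|_{Gv(s)} = C F_M'(RC) \max_{i} \|T_i(u-v)\|_{Gv(s)},
\]
using that $F_M'(r) = \sum_k |a_k| |k| r^{|k|-1}$.

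The only mildly subtle step is the telescoping formula for a general multi-index monomial and the careful bookkeeping of the constants $C$ appearing in each iteration of the Banach algebra inequality, so that the final bound matches $F_M(RC)$ and $F_M'(RC)$ exactly; everything else is a direct transcription of the proof of Lemma \ref{lem:4.2}, with the convolution Banach algebra estimate playing the role of Lemma \ref{lem:4.1}.
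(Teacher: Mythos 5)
Your proposal is correct and follows essentially the same route as the paper: the first bound is obtained by iterating the Banach algebra property of $Gv(s)$ under convolution (Lemma \ref{lem:4.7}) on each convolution monomial and summing against the majorizing series, and the second by the same telescoping decomposition of $u^{\ast k}-v^{\ast k}$ into $|k|$ terms each carrying one factor of $T_i(u-v)$, yielding $F_M'(RC)$. The paper's version is terser but identical in substance.
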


\begin{proof}
The first inequality in (\ref{eq:4.13}) is an immediate consequence of Lemma \ref{lem:4.7}. Concerning the second, we have
\begin{equation*}
 \begin{split}
 F(u) - F(v) & = \sum_{j\in \Z_+^d}a_j \left(u ^{*j}-v^{*j}\right)  \\
 &= \sum_{j \in \Z_+^d} a_j \sum_{k=0}^{|j|-1} \left[ u^{*(|j|-|k|)} \ast v^{*|k|} - u^{*(|j|-|k|-1)}\ast v^{*(|k|+1)} \right].
 \end{split}
\end{equation*}
Applying Lemma \ref{lem:4.7}, we readily obtain
\begin{equation*}
 \begin{split}
 \left\|F(u) - F(v)\right\|_{Gv(s)} & \le \sum_{j \ge 1}|a_j|jC^jR^{j-1} \max_{1\leq i\leq n} \left\|T_{i}(u -v)\right\|_{Gv(s)} \\
 &\le C F^{'}_{M}(RC) \max_{1\leq i\leq n} \left\|T_{i}(u -v)\right\|_{Gv(s)}.
 \end{split}
\end{equation*}
This completes the proof.
\end{proof}

\begin{lemma} \label{lem:4.9}
From the assumptions on $T_i$, for $ s, \ t, \ \beta \ge 0$,
\eqn \label{eq:4.14}
\left\|T_iv \right\|_{Gv(s)} \le \|v\|_{Gv(s, \ati)}, \quad \left\|T_ie^{-\frac{1}{2}t\fd}v \right\|_{Gv(s)} \le \frac{C_{\ati}}{t^{\ati /\kappa}}\|v\|_{Gv(s)}.
\een
\end{lemma}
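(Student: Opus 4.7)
My approach is to pass entirely to Fourier variables, where every operator appearing in (\ref{eq:4.14}) is an explicit multiplication: the Gevrey norm $\|\cdot\|_{Gv(s)}$ is an $L^{1}$ integral weighted by $e^{\frac{1}{2}s^{1/\kappa}|\xi|}$, the semigroup $e^{-\frac{1}{2}t\fd}$ acts by multiplication by $e^{-\frac{1}{2}t|\xi|^{\kappa}}$, and the operator $T_{i}$ acts by multiplication by its Fourier symbol $m_{i}(\xi)$. From the assumption (\ref{eq:2.1}) and the representative list of admissible $T_{i}$ given immediately after it (multipliers of the form $\Omega_{i}(\xi)\xi^{\vec{\alpha}_{T_{i}}}$ or $\Omega_{i}(\xi)|\xi|^{\ati}$ with bounded $\Omega_{i}$, plus shifts with $\ati=0$), the symbol satisfies the uniform pointwise bound $|m_{i}(\xi)|\le C|\xi|^{\ati}$. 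Both estimates in (\ref{eq:4.14}) then reduce to integrating this symbol bound together with an elementary scalar estimate against $e^{\frac{1}{2}s^{1/\kappa}|\xi|}|v(\xi)|\,d\xi$.

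For the first inequality I would insert the symbol estimate directly into the definition (\ref{eq:4.12}), obtaining
\[
\|T_{i}v\|_{Gv(s)} \le \int_{\R^{d}}e^{\frac{1}{2}s^{1/\kappa}|\xi|}|m_{i}(\xi)||v(\xi)|\,d\xi \le C\int_{\R^{d}}e^{\frac{1}{2}s^{1/\kappa}|\xi|}|\xi|^{\ati}|v(\xi)|\,d\xi = C\|v\|_{Gv(s,\ati)}.
\]
For the second inequality, the factor $e^{-\frac{1}{2}t|\xi|^{\kappa}}$ is used to absorb the $|\xi|^{\ati}$ coming from the symbol bound via the elementary scalar optimization
\[
\sup_{r\ge 0}\,r^{\ati}e^{-\frac{1}{2}tr^{\kappa}} = \left(\frac{2\ati}{e\kappa}\right)^{\ati/\kappa}\frac{1}{t^{\ati/\kappa}} =: \frac{C_{\ati}}{t^{\ati/\kappa}},
\]
obtained by setting the derivative of $r\mapsto \ati\log r - \tfrac{1}{2}tr^{\kappa}$ to zero. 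Applied pointwise at $r=|\xi|$ and pulled out of the integral, this yields the claimed prefactor $C_{\ati}/t^{\ati/\kappa}$ times $\|v\|_{Gv(s)}$.

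I do not anticipate any real obstacle: this lemma is the Fourier-$L^{1}$ analogue of Lemma \ref{lem:2.2}, and in comparison the argument is strictly simpler because no Littlewood–Paley kernel bound of the sort used in Lemma \ref{lem:2.1} is required in the $L^{1}$-in-$\xi$ setting. The only mildly non-automatic point is confirming the symbol bound $|m_{i}(\xi)|\le C|\xi|^{\ati}$, but this is immediate from the degree-zero homogeneity of $\Omega_{i}$ in each of the examples allowed by (\ref{eq:2.1}).
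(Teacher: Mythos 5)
Your proposal is correct and follows essentially the same route as the paper: both pass to the Fourier-side $L^1$ definition of the Gevrey norm, bound the symbol of $T_i$ pointwise by $C|\xi|^{\ati}$, and for the second estimate absorb the $|\xi|^{\ati}$ factor into $e^{-\frac{1}{2}t|\xi|^{\kappa}}$ via the scalar bound $\sup_{r\ge 0} r^{\ati}e^{-\frac{1}{2}tr^{\kappa}} \le C_{\ati}t^{-\ati/\kappa}$. Your explicit justification of the symbol bound from the homogeneity of $\Omega_i$ is a small clarification the paper leaves implicit, but it is not a different argument.
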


\begin{proof}
By definition of the Gevery norm, the first term in (\ref{eq:4.14}) can be obtained by
\[
\left\|T_iv\right\|_{Gv(s)} \le \int_{\R^{d}} |\xi|^{\ati} e^{\frac{1}{2}s^{\frac{1}{\kappa}}|\xi|} |v(\xi)|d\xi =\|\bv\|_{Gv(s, \ati)}.
\]
Since for all $\alpha \ge 0$
\[
|\xi|^{\alpha} e^{-t|\xi|^{\kappa}} \leq C_{\alpha} t^{-\frac{\alpha}{\kappa}}, \quad \text{ where C is independent of} \quad \xi, t, \alpha,
\]
we can prove the second term in (\ref{eq:4.15}) by
\begin{equation*}
 \begin{split}
 \left\|T_ie^{-\frac{1}{2}t\fd}v\right\|_{Gv(s)} &\le \int_{\R^{d}} |\xi|^{\ati} e^{-\frac{1}{2}|\xi|^{\kappa}t} e^{\frac{1}{2}s^{\frac{1}{\kappa}}|\xi|} |v(\xi)|d\xi \\
 & \leq \frac{C_{\ati}}{t^{\ati /\kappa}} \int_{\R^{d}} e^{\frac{1}{2}s^{\frac{1}{\kappa}}|\xi|} |v(\xi)|d\xi =\frac{C}{t^{\ati /\kappa}} \|v\|_{Gv(s)}.
 \end{split}
\end{equation*}
This completes the proof.
\end{proof}

\subsubsection*{\bf Proof of Theorem \ref{thm:4.10}}
We set
\[
E = \Big\{u\in C\big((0,T); \bbv \big):\|u\|_E = \sup_{s \in (0,T)}\max  \left\{\|u(s)\|_{Gv(s)}, \ \|u(s)\|_{Gv(s, \alpha)} \right\} \le  R \Big\}.
\]
For $u \in E$, we define
\eqn  \label{eq:4.15}
(Su)(t) = e^{-t\fd}u_0 + \int_0^{t} e^{-(t-s)\fd}G(u(s)) ds.
\een
Note that since $\kappa \ge 1$, we have $t^{1/\kappa} \le s^{1/\kappa} + (t-s)^{1/\kappa}$. Consequently,
\begin{equation} \label{eq:4.16}
 \begin{split}
 & \|e^{-(t-s)\fd}G(u(s))\|_{Gv(t)}  \le \int_{\R^{d}} e^{\frac{1}{2}t^{1/\kappa}|\xi|}e^{-(t-s)|\xi|^{\kappa}} \left|(T_{0}F(u(s)))(\xi) \right| d\xi  \\
 & \le C \int_{\R^{d}} e^{\frac{1}{2}(t-s)^{1/\kappa}|\xi|}e^{-\frac{1}{2}(t-s)|\xi|^{\kappa}}e^{\frac{1}{2}s^{1/\kappa}}e^{-\frac{(t-s)}{2}|\xi|^{\kappa}}  \left|(T_{0}F (u(s)))(\xi) \right| d\xi  \\
 & \le C \int_{\R^{d}} e^{\frac{1}{2}s^{1/\kappa}|\xi|}e^{-\frac{(t-s)}{2}|\xi|^{\kappa}} \left|(T_{0}F(u(s)))(\xi) \right| d\xi,
 \end{split}
\end{equation}
where we use the fact that
\eqn  \label{eq:4.17}
e^{\frac{1}{2}\tau^{1/\kappa}|\xi|}e^{-\frac{\tau}{2}|\xi|^{\kappa}} =
e^{\frac{1}{2}|\tau^{1/\kappa}\xi|}e^{-\frac{1}{2}|\tau^{1/\kappa}\xi|^{\kappa}} \le C
\een
with the constant $C$ independent of $\tau, \xi$. Therefore,
\begin{equation} \label{eq:4.18}
 \begin{split}
 \left\|e^{-(t-s)\fd}G(u(s))\right\|_{Gv(t)} & \leq \frac{C_{\alpha_{T_{0}}}}{(t-s)^{\alpha_{T_0} /\kappa}} \left\|F(u)\right\|_{Gv(t)} \le \frac{C_{\alpha_{T_{0}}}}{(t-s)^{\alpha_{T_0}/\kappa}} C_1F_M(RC),
 \end{split}
\end{equation}
where we use (\ref{eq:4.14}) to obtain the first  inequality, and we use the first inequality in (\ref{eq:4.13}) to obtain the second inequality in (\ref{eq:4.18}). Similarly,  one can obtain also the estimate
\eqn \label{eq:4.19}
\left\|e^{-(t-s)\fd}G(u(s)) \right\|_{Gv(t, \alpha)} \le \frac{C_{\alpha_{T_0}+ \alpha}}{(t-s)^{(\alpha_{T_0}+ \alpha)/\kappa}} C_1F_M(RC).
\een
Thus, we obtain that 
\begin{equation} \label{eq:4.20}
 \begin{split}
  \left\|\int_0^{t} e^{-(t-s)\fd}G(u(s))\, ds \right\|_E  \le C_{\alpha_{T_0}+ \alpha} F_M(RC)\max \left\{ T^{1 - \frac{\alpha_{T_0}+ \alpha}{\kappa}}, T^{1 - \frac{\alpha_{T_0}}{\kappa}} \right\}.
 \end{split}
\end{equation}
Proceeding along similar lines but using the second inequality in (\ref{eq:4.13}) in the last step, we can also obtain
\eqn \label{eq:4.21}
\left\|Su_1 - Su_2 \right\|_E \le F^{'}_{M}(RC) \max \left\{ T^{1 - \frac{\alpha_{T_0}+ \alpha}{\kappa}}, T^{1 - \frac{\alpha_{T_0} }{\kappa}} \right\} \|u_1-u_2\|_E.
\een
Concerning the linear term, using (\ref{eq:4.17}), it is easy to see that
\eqn \label{eq:4.22}
\max \left\{ \|e^{-t\fd}u_0\|_{Gv(t)}, \|e^{-t\fd}u_0\|_{Gv(t, \alpha)} \right\} \le \tilde{C}\max \left\{ \|u_0\|, \|u_0\|_{\alpha} \right\} \le \frac{R}{2}.
\een
From (\ref{eq:4.20}), (\ref{eq:4.21}) and (\ref{eq:4.22}), it follows that $S$ is contractive in $E$ provided $T$ is suitably small. Thus we can find a fixed point. The adjustment in the above argument necessary to obtain the global result in  the periodic setting is similar to Subsection 4.3 above.

\section{PROOF OF APPLICATIONS}

We now provide the proof of the decay results stated in Section 3. 

\subsection{Proof of Theorem \ref{thm:3.1}.}
It is enough to prove (\ref{eq:3.1}). For $u_{0} \in L^{2}$, we have the following energy estimate
\[
\|u(t)\|_{L^2}^2 + \int_0^t\|u(s)\|_{\hh^1}^2\, ds \le \|u_0\|_{L^2}^2.
\]
This implies that
\eqn \label{eq:3.4}
\sup_{t >0}\|u(t)\|_{L^2}^2 \le \|u_0\|_{L^2}^2, \quad \liminf_{t \ra \infty}\|u(t)\|_{\hh^1} =0.
\een
In order to obtain the second relation in (\ref{eq:3.4}), for $\epsilon >0$  arbitrary, choose $t$ large so that $\frac{1}{t}\|u_0\|_{L^2}^2 < \epsilon /4$. We note that the energy inequality yields 
\[
\frac{1}{t}\int_0^t\|u(s)\|_{\hh^1}^2\, ds \le \frac{1}{t}\|u_0\|_{L^2}^2.
\]
This immediately implies that there exists $t_0 \in (0,t)$ such that $\|u(t_0)\|_{\hh^1}^2 < \epsilon $. Recall now the interpolation inequality
\[
\|u\|_{\hh^{\beta}} \le \|u\|_{\hh^{\beta_1}}^{\theta}\|u\|_{\hh^{\beta_2}}^{1-\theta}, \quad \beta = \theta\beta_1 +(1-\theta)\beta_2, \quad \theta \in (0,1), \quad \beta_i \in \R,  \ i=1,2.
\]
Due the uniform bound on $\|u(t)\|_{L^2}^2$, it also follows that $\displaystyle\liminf_{t \ra \infty}\|u(t)\|_{\hh^{\beta}} =0$ for $0 < \beta \le 1$. When $p=2$, we have $\beta_c= \frac{1}{2}$ and consequently (\ref{eq:3.1}) holds, which implies (\ref{eq:3.3}). For $p >2$, (\ref{eq:3.3}) follows from a direct application of the Sobolev inequalities (\ref{eq:2.10}). We now focus on the case $1<p < 2$. To this end, we use the  the vorticity $\omega = \nabla \times u$. Note first that
\[
\liminf_{t\ra \infty}\|\omega\|_{L^2}^2=\liminf_{t\ra \infty}\|\nabla u\|_{L^2}^2  = \liminf_{t\ra \infty}\|u\|_{\hh^1}^2 =0.
\]
From the vorticity equation, 
\[
\omega_{t} +u\cdot \nabla \omega -\Delta \omega =\omega \nabla u,
\]
we have  the uniform $L^1$ bound (see \cite{cf})
\eqn  \label{eq:3.5}
 \|\omega(t)\|_{L^1} \le C(\|u_0\|_{L^2}^2 + \|\omega_0\|_{L^1}).
\een
The uniform $L^1$ bound and the interpolation inequality
\[
 \|\omega\|_{L^q} \le \|\omega\|_{L^1}^\theta\|\omega\|_{L^2}^{(1-\theta)},  \quad 1/q=\theta + (1-\theta)/2
\]
implies
\eqn  \label{eq:3.6}
 \liminf_{t\ra \infty} \|\omega\|_{L^q} =0, \quad 1<q\le 2.
\een
Recall that $\nabla u = \nabla (\nabla \times (\Delta)^{-1}\omega)$. The operator $\nabla (\nabla \times (\Delta)^{-1})$
is a Fourier multiplier of homogeneous degree zero, and consequently, by  the Calderon-Zygmund theorem, we have
\eqn  \label{eq:3.7}
\|\omega\|_{\hh^\zeta_q}= \left\|\Lambda^\zeta \omega \right\|_{L^q} \sim  \left\|\Lambda^\zeta\nabla u \right\|_{L^q} \sim \|u\|_{\hh^{1+\zeta}_q}, \quad 1 < q < \infty , \quad \zeta \ge 0.
\een
Consequently,
\eqn \label{eq:3.8}
 \liminf_{t\ra \infty}\|u\|_{\hh^{1}_q}=0, \quad 1<q<2.
\een
For $p=3/2, \ \beta_c =1$ and (\ref{eq:3.1}) follows directly from (\ref{eq:3.8}).  For $3/2 < p <2$ and $\beta_c$ as above, applying (\ref{eq:2.10}), we  have 
\[
\|u\|_{\hh^{\beta_c}_p} \le \|u\|_{\hh^1_{3/2}}
\]
and once again (\ref{eq:3.1}) follows from (\ref{eq:3.8}). We now consider the case $1<p < 3/2$. From (\ref{eq:3.7}) and (\ref{eq:3.3}) with $p=2$ which we already established, it follows that
\eqn  \label{eq:3.9}
\lim_{t\ra \infty}\|\omega\|_{\hh^{\zeta}}\le \lim_{t\ra \infty}\|u\|_{\hh^{1+\zeta}}=0, \quad \zeta \ge 0.
\een
Recall now the generalization of the Gagliardo-Nirenberg inequalities for the fractional homogeneous  Sobolev spaces (see \cite{meyer2006oscillating}), namely,
\eqn \label{eq:3.10}
\|\omega\|_{\hh^{\alpha}_p}\le C \|\omega\|_{\hh^{\alpha_1}_{q_1}}^{\theta}\|\omega\|_{L^{q_2}}^{1-\theta},\quad \theta \in (0,1), \ \alpha = \theta \alpha_1, \quad \frac{1}{p}   = \frac{\theta}{q_1}+ \frac{1-\theta}{q_2}.
\een
Combining (\ref{eq:3.10}) with (\ref{eq:3.6}) and (\ref{eq:3.9}) we have
\[
\liminf_{t \ra \infty} \|\omega\|_{\hh^{\beta}_p} =0, \quad \beta >0, \quad 1<p<2.
\]
Consequently, by (\ref{eq:3.7}), we also have
\[
\liminf_{t \ra \infty} \|u\|_{\hh^{1+\beta}_p} =0, \quad 1<p<3/2, \quad \beta >0.
\]
Noting that for $1<p<3/2$, we have $\beta_c >1$. By taking $\beta$ such that $1+\beta=\beta_{c}$, (\ref{eq:3.1}) follows.

\subsection{Proof of Theorem \ref{thm:qgdecay}.}
For notational simplicity, we will write $\|\cdot \|_{Gv(t,0,p_0)} = \|\cdot \|_{Gv(t)}$. Note that in the notation of Theorem \ref{thm:2.8}, we have $T_0 = \nabla$,  \ $T_1 = {\cal R}$, and $ T_2 =I$. It is known that solution to (\ref{qg}) satisfies (see \cite{cordoba2004})
\[
\lim_{t\ra \infty} \|\eta\|_{L^p} =0, \quad 1 \le p \le \infty .
\]
We apply the local existence part of Theorem \ref{thm:2.8} with 
\[
d=2, \ n=2, \ \alpha_{T_0}=1, \ \alpha_{T_1}=\alpha_{T_2}=0, \ p_0 = \frac{2}{\kappa -1}, \ \beta_c =0. 
\]
Let $t_1$ be such that $\|\eta(t_1)\|_{L^{p_0}} < \epsilon $ where $\epsilon $ is as in Theorem \ref{thm:2.8}.
Applying the global existence part of this theorem, we have
\begin{gather*}
\sup_{t > t_1} \left\|\eta(t)\right\|_{Gv(t-t_1)} < \infty.
\end{gather*}
From (\ref{gevdecay}), we obtain
\[
\left\|\eta(t) \right\|_{\hh^{\alpha}_{p_0}} \le \frac{C^{\alpha} \alpha^{\alpha}}{(t-t_1)^{\frac{\alpha}{\kappa}}}
\le \frac{C_1^{\alpha} \alpha^{\alpha}}{t^{\frac{\alpha}{\kappa}}}\ \mbox{for all} \ t\ge t_1+1, \alpha >0.
\]
Also from the local existence part of the theorem, there exists $t_2>0, \beta >0$ such that
\begin{gather}  \label{applylocal}
\max \left\{ \|\eta\|_{Gv(t)}, t^{\beta/\kappa}\|\eta\|_{\hh^{\beta}_{p_0}} \right\} \le 2\|\eta_0\|_{L^{p_0}} \ \mbox{for all} \ 0<t \le t_2.
\end{gather}
Thus (\ref{qgdecay}) holds for all $t \in (0,t_2] \cup [t_1+1 , \infty)$. To complete the proof, we will need to show (\ref{qgdecay}) for $t \in [t_2,t_1+1]$. In fact, since $t$ lies in the compact interval $[t_2, t_1+1]$, it will be enough to show an estimate of the form
\begin{gather}  \label{decaymodified}
\left\|\eta(t) \right\|_{\hh^{\alpha}_{p_0}} \le C^{\alpha} \alpha^{\alpha}
\end{gather}
for a constant $C$ that does not depend on $t$ or $\alpha $. Note that due to (\ref{applylocal}), we have $\|\eta(t_2/2)\|_{\hh^{\beta}_{p_0}} < \infty $. Due to the global well-posedness for the sub-critical quasi-geostrophic equations, we have (see \cite{wu2001, Carrillo})
\begin{gather*}
M:= \sup_{t_2 \le t \le t_1+1}\|\eta(t)\|_{\hh^{\beta}_{p_0}} < \infty .
\end{gather*}
Thus, applying the non-critical case of Theorem \ref{thm:2.8}, there exists a time $0< t_3 < t_2/2$ depending on $M$, such that
\[
\left\|\eta(t) \right\|_{Gv(t_3)} \le 2\|\eta(t-t_3)\|_{\hh^{\beta}_{p_0}} \le 2M \ \mbox{for all}\ t_2 \le t \le t_1+1.
\]
Once again due to (\ref{gevdecay}), this yields (\ref{decaymodified}).

\subsection{Proof of Theorem \ref{thm:nonlinheat}} 
As in the previous cases, multiplying by $u$, integrating by parts and applying the Sobolev inequality, we have
we obtain
\begin{gather*}
\dt \|u(t)\|_{L^2}^2 + \left\|\Lambda^{\kappa/2}u \right\|_{L^2}^2 \le C|\alpha| \left\|\Lambda^{\kappa/2}u \right\|_{L^2}^2.
\end{gather*}
In order to apply the Sobolev inequality, we implicitly use condition (i) or (ii). Since $|\alpha| < \epsilon $, this immediately implies
\begin{gather}  \label{nonlinheatineq}
\|u(t)\|_{L^2}^2 + \int_0^T \left\|\Lambda^{\kappa/2}u(s) \right\|_{L^2}^2\, ds\le \|u_0\|_{L^2}^2.
\end{gather}
Using (\ref{nonlinheatineq}) for the Galerkin approximation, one can establish existence of a global weak solution of (\ref{nonlinheat}) satisfying (\ref{nonlinheatineq}). From (\ref{nonlinheatineq}), it follows as before that 
\[
\liminf_{t \ra \infty} \|\Lambda^{\kappa/2}u\|_{L^2} =0.
\]
 Again from (i) or (ii), it follows that $\beta_c \le \kappa/2$ and the Poincar\'{e} inequality implies
that 
\[
\liminf_{t \ra \infty} \|\Lambda^{\beta_c}u\|_{L^2} =0.
\]
The rest of the proof follows as in the previous cases. Concerning the whole space, the proof is the same when we proceed using (\ref{eq:2.10}).

\subsection{Proof of Theorem \ref{thm:3.2}.}
As noted previously, it suffice to prove (\ref{eq:3.1}). We begin with the the $L^{2}$ energy estimate. We multiply (\ref{eq:3.17}) by $u$ and integrate over $\R$. Using integration by parts, we get
\[
\|u(t)\|_{L^2}^2 + \int_0^t\|u(s)\|_{\hh^1}^2\, ds \le \|u_0\|_{L^2}^2.
\]
As in the previous subsection, this implies 
\[
\liminf_{t \ra \infty}\|u(t)\|_{\hh^1} =0.
\]
Consequently, due to the uniform bound on $\|u\|_{L^2}^2$, it also follows that 
\[
\liminf_{t \ra \infty}\|u(t)\|_{\hh^{\beta}} =0
\]
for all $0 < \beta \le 1$. If $n\ge 4$, $\beta_{c}\in (0,1)$ and so the decay now follows. For $n=3$, the critical space is $L^{2}$. Therefore, we need to show 
\[
\liminf_{t\ra \infty}\|u(t)\|_{L^2}=0.
\]
To do this, it will be enough to obtain a time independent bound for $\|u(t)\|_{\hh^{-1}}$. To this end, we multiply (\ref{eq:3.17}) by $\Lambda^{-2}u$ and integrate by parts over $\R$ to get
\begin{equation} \label{eq:3.18}
 \begin{split}
 & \frac{d}{dt} \left\|\Lambda^{-1}u(t) \right\|^{2}_{L^{2}} +\|u(t)\|^{2}_{L^{2}} =\int_{\R} \Lambda^{-2}u(t) \partial_{x}({u(t)}^{3})dx\\
 & \leq \left\|\Lambda^{-1} u(t)\right\|_{L^{2}} \|u(t)\|_{L^{2}} \|u(t)\|^{2}_{L^{\infty}}  \leq C \left[ \left\| \Lambda^{-1}u(t) \right\|^{2}_{L^{2}} \|u(t)\|^{4}_{L^{\infty}}\right] +\frac{1}{2} \|u(t)\|^{2}_{L^{2}},
 \end{split}
\end{equation}
where we have used the fact that the Hilbert transform $\partial_x \Lambda^{-1}$ is a bounded operator on  $L^2$. In one dimension, we have the inequality \cite{Friedman}:
\eqn \label{eq:3.19}
\|u(t)\|_{L^{\infty}} \leq C \|u(t)\|^{\frac{1}{2}}_{L^{2}} \left\|\nabla u(t)\right\|^{\frac{1}{2}}_{L^{2}}.
\een
Using (\ref{eq:3.18}), (\ref{eq:3.19}) and Gronwall's inequality, we obtain
\begin{equation*}
 \begin{split}
 & \left\|\Lambda^{-1} u(t) \right\|^{2}_{L^{2}} \\
 &\leq C \left\|\Lambda^{-1}u_{0}\right\|^{2}_{L^{2}} \exp \int^{t}_{0} \|u(s)\|^{4}_{L^{\infty}} ds  \leq C \left\|\Lambda^{-1} u_{0}\right\|^{2}_{L^{2}} \exp \int^{t}_{0} \|u(s)\|^{2}_{L^{2}} \|\nabla u(s)\|^{2}_{L^{2}} ds\\
 & \leq C \left\| \Lambda^{-1} u_{0} \right\|^{2}_{L^{2}} \exp \left[ \|u_{0}\|^{2}_{L^{2}} \int^{t}_{0}\|\nabla u(s)\|^{2}_{L^{2}} ds\right] \leq C \left\|\Lambda^{-1}u_{0}\right\|^{2}_{L^{2}} \exp \left[ \|u_{0}\|^{4}_{L^{2}}\right].
 \end{split}
\end{equation*}
This finishes the proof.

\subsection{Proof of Theorem \ref{thm:3.3}.}
We only need to show (\ref{eq:3.1}). Multiplying (\ref{eq:3.21}) by $u$ and integrating by parts, we arrive at
\[
\frac{1}{2}\dt \|u\|_{L^2}^2 + \left\|\Delta u\right\|_{L^2}^2 \le - 3\beta \int_{R^{d}} |u|^2 \left|\nabla u\right|^2dx.
\]
Noting $\beta>0$, as before, this immediately yields
\[
\|u(t)\|_{L^2}^2 + \int_0^t \left\|\Delta u(s)\right\|_{L^2}^2 ds \le \|u_0\|_{L^2}^2.
\]
This yields an time independent uniform bound for $\|u(t)\|_{L^2}$ and also that 
\[
\liminf_{t\ra \infty}\|\Delta u\|_{L^2}=0.
\]
Interpolation immediately yields (\ref{eq:3.1}) in case $d \ge 3$. For cases $d=1,2$, multiplying (\ref{eq:3.21}) by $\Lambda^{-2}u$ (recall $\Lambda^2=-\Delta$) and integrating by parts, we arrive at
\[
\frac{1}{2}\dt \left\|\Lambda^{-1}u\right\|_{L^2}^2 + \left\|\Lambda u\right\|_{L^2}^2 \le - 3\beta \int_{R^{d}} |u|^4 dx .
\]
This yields
\[
\|u\|_{\hh^{-1}} \le \|u_0\|_{\hh^{-1}}, \quad \liminf_{t\ra \infty} \|u\|_{\hh^1}=0.
\]
For $d=1$ and $d=2$, we have $\beta_c= -1/2$ and $\beta_c=0$ respectively, and thus we have (\ref{eq:3.1}).

\subsection{Proof of Theorem \ref{thm:4.6}.}
Multiplying the equation by $u$ and integrating by parts, we have the energy inequality
\begin{equation*}
 \begin{split}
 &\frac12 \dt \|u\|_{L^2}^2 + \left\|\Delta u\right\|_{L^2}^2 \\
 & \le \sum_{j=1}^{2N-2} j|a_j|\int_{\mathbb{R}^{d}} |\nabla u|^2 |u|^{j-1}dx - (2N-1)a_{2N-1}\int_{\mathbb{R}^{d}} |\nabla u|^2 u^{2(N-1)}dx.
 \end{split}
\end{equation*}
Let
\[
I_1 = \sum_{j=1}^{2N-2} j|a_j| \int_{u:|u| \le 1} \left|\nabla u\right|^2 |u|^{j-1}dx, \quad I_2 = \sum_{j=1}^{2N-2} j|a_j|\int_{u:|u| > 1} |\nabla u|^2 |u|^{j-1}dx.
\]
By applying (\ref{eq:4.9}) and Poincare inequality, we get
\[
I_1 \le \delta \int_{\mathbb{R}^{d}} \left|\nabla u\right|^2dx \le \delta \lambda_0 \|u\|_{L^2}^2, \quad I_2 \le \delta \int_{\mathbb{R}^{d}} |\nabla u|^2 u^{2(N-1)}dx.
\]
Provided $\delta $ is sufficiently small, from the energy inequality, we get
\[
\frac12 \dt \|u\|_{L^2}^2 + \gamma \left\|\Delta u\right\|_{L^2}^2 \le  0,
\]
where $\gamma >0$ is an adequate constant. As before, from here we obtain
\[
\|u\|_{L^2}^2 \le \|u_0\|_{L^2}^2, \quad \limsup_{t \ra \infty} \left\|\Delta u\right\|_{L^2} =0.
\]
By interpolation, this implies that 
\[
\liminf_{t \ra \infty}\|u\|_{\hh^{\beta}} =0
\]
for all $\beta <2$. By Poincare inequality, this in turn implies 
\[
\liminf_{t \ra \infty}\|u\|_{L^2} =0
\]
as well. Consequently, 
\[
\liminf_{t \ra \infty}\|u\|_{\h^{\beta}} =0
\]
for all $\beta < 2$. This finishes the proof.

\section{APPENDIX} \label{sec:5}

We now present the Littlewood-Paley theory and its application to the Kato-Ponce inequality. For more details of the Littlewood-Paley theory, see \cite{chemin1998perfect, Danchin}.

\subsection{Littlewood-Paley theory}
We take a couple of smooth functions $(\chi, \phi)$ supported on $\{\xi ; |\xi|\leq 1\}$ with values in $[0,1]$ such that for all $\xi \in \mathbb{R}^{d}$,
\[
\chi(\xi)+ \displaystyle\sum^{\infty}_{j=0}\psi(2^{-j}\xi)=1,
\]
where $\psi(\xi)=\phi\left(\xi/2\right)-\phi(\xi)$. We denote $\psi\left(2^{-j}\xi\right)$ by $\psi_{j}(\xi)$. The homogeneous dyadic blocks and lower frequency cut-off functions are defined by
\[
\Delta_{j}u=2^{jd} \int_{\mathbb{R}^{d}} h\left(2^{j}y\right)u(x-y)dy, \quad S_{j}u=2^{jd} \int_{\mathbb{R}^{d}} \tilde{h}\left(2^{j}y\right)u(x-y)dy,
\]
where $h=\cf^{-1}\psi$ and $\tilde{h}=\cf^{-1}\chi$. We note that 
\[
u=\sum_{j\in \mathbb{Z}} \Delta_{j}u \quad \text{in} \quad {\cs}^{'}_{h},
\]
where ${\cs}^{'}_{h}$ is the space of tempered distributions $u$ such that $ \displaystyle \lim_{j\rightarrow -\infty}S_{j}u=0$ in ${\cs}^{'}$. This is called the Littlewood-Paley decomposition. This decomposition allows us to characterize a large range of functions spaces in a unified way. In this section, we only consider the homogeneous Triebel-Lizorkin spaces (\cite{Frazier}):
\[
\|f\|_{\dot{F}^{\alpha}_{p,q}}=\left\| \Big(\sum_{j\in\mathbb{Z}} 2^{2j\alpha}|\Delta_{j}f|^{2} \Big)^{\frac{1}{2}}\right\|_{L^{p}}.
\]
In particular, for $q=2$,
\eqn  \label{eq:5.1}
\|f\|_{\hh^{\alpha}_p} \simeq \left\| \Big(\sum_{j\in\mathbb{Z}}2^{2j\alpha}|\Delta_{j}f|^{2}\Big)^{\frac{1}{2}} \right\|_{L^{p}}.
\een

The concept of paraproduct is to deal with the interaction of two functions in terms of low or high frequency parts. For $u$, $v$ two tempered distributions,
\begin{equation*}
 \begin{split}
 & uv=T_{u}v+T_{v}u+R(u,v), \quad \text{where}\\
 & T_{u}v= \sum_{i\leq j-2}\Delta_{i}u\Delta_{j}v= \sum_{j\in\mathbb{Z}}S_{j-1}u\Delta_{j}v, \quad S_{j}u= \sum_{l\leq j-1}\Delta_{l}u,  R(u,v)= \sum_{|j-j^{'}|\leq 1}\Delta_{j}u \Delta_{j^{'}}v.
 \end{split}
\end{equation*}
Then, up to finitely many terms,
\[
\Delta_{j}\left(T_{u}v\right)=S_{j-1}u\Delta_{j}v, \quad \Delta_{j}\left(R(u,v)\right)=\sum_{k\ge j-2}\Delta_{k}u\Delta_{k}v.
\]

\subsection{Proof of the Kato-Ponce inequality \cite{Kato2}} 

We only prove the homogeneous part: for $p$, $p_{i}$, and $q_{i}$ such that $
\frac{1}{p}=\frac{1}{p_{1}}+\frac{1}{q_{1}}=\frac{1}{p_{2}}+\frac{1}{q_{2}}$, $1\leq p<\infty$, $p_{i}, q_{i} \ne 1$, we have the following estimation:
\eqn \label{eq:5.2}
\left\|\Lambda^{s}(fg)\right\|_{L^{p}} \le C \left[ \left\|\Lambda^{s}f\right\|_{L^{p_{1}}} \|g\|_{L^{q_{1}}} +\|f\|_{L^{p_{2}}} \left\|\Lambda^{s}g\right\|_{L^{q_{2}}} \right].
\een

\begin{proof}
We decompose $fg$ as follows:
\begin{equation} \label{eq:5.3}
 \begin{split}
 fg &=\sum_{k} \Big(\sum_{j \leq k-2}\Delta_{j}f \Big) \Delta_{k}g + \sum_{k} \Big(\sum_{j \leq k-2}\Delta_{j}g \Big)\Delta_{k}f  +\sum_{|j-k|\leq 1} \Delta_{j}f \Delta_{k}g\\
 &=\sum_{k}S_{k-1}f \Delta_{k}g +\sum_{k} \Delta_{k} S_{k-1}g \Delta_{k} f +\sum_{|j-k|\leq 1} \Delta_{j}f \Delta_{k}g\\
 &=(a)+(b)+(c).
 \end{split}
\end{equation}
We apply $\Lambda^{s}$ to (\ref{eq:5.3}) and estimate three terms separately. We begin with (a).
\begin{equation} \label{eq:5.4}
 \begin{split}
 \left\|\Lambda^{s}(a) \right\|_{L^{p}} &\le C \left\| \Big[ \sum_{k} \Big| \Lambda^{s}\Big( \sum_{l} S_{l-1}f \Delta_{l}g\Big) \Big|^{2}\Big]^{\frac{1}{2}} \right\|_{L^{p}} \\
 &\le C \left\| \Big[ \sum_{k} \big| 2^{ks} S_{k-1}f \Delta_{k}g \big|^{2}\Big]^{\frac{1}{2}}\right\|_{L^{p}}.
 \end{split}
\end{equation}
Since for any $k$,
\[
\left|S_{k-1}f(x)\right| \leq C\cm f(x)=\sup_{r>0} \frac{1}{r^{d}}\int_{B(x,r)}|f(y)|dy,
\]
where $\cm$ is the Hardy-Littlewood maximal operator, we can estimate the right-hand side of (\ref{eq:5.4}) by
\begin{equation} \label{eq:5.5}
 \begin{split}
 \left\|\Lambda^{s}(a) \right\|_{L^{p}} & \le C \left\| \cm f \left[ \sum_{k} \big|2^{ks}\Delta_{k}g \big|^{2}\right]^{\frac{1}{2}}\right\|_{L^{p}} \le C \left\|\cm f\right\|_{L^{p_{1}}} \left\| \left[ \sum_{k} \big|2^{ks} \Delta_{k} g \big|^{2} \right]^{\frac{1}{2}} \right\|_{L^{q_{1}}} \\
 & \le C \|f\|_{L^{p_{1}}} \left\|\Lambda^{s}g\right\|_{L^{q_{1}}},
 \end{split}
\end{equation}
where we use the fact that $\cm$ maps $L^{p}$ to $L^{p}$ for all $p>1$. By using the same method,
\eqn \label{eq:5.6}
\left\|\Lambda^{s}(b)\right\|_{L^{p}} \le C \left\|\Lambda^{s}f\right\|_{L^{p_{2}}} \|g\|_{L^{q_{2}}}.
\een
We finally estimate $\Lambda^{s}(c)$.
\begin{equation} \label{eq:5.7}
 \begin{split}
 \left\|\Lambda^{s}(c)\right\|_{L^{p}} &\le C \left\| \Big[ \sum_{k} \Big| \Lambda^{s} \Big( \sum_{|l-l^{'}| \leq 1} \Delta_{l} f \Delta_{l^{'}}g\Big)\Big|^{2}\Big]^{\frac{1}{2}}\right\|_{L^{p}} \\
 & \le C \left\| \Big[ \sum_{k} \Big| 2^{ks} \sum_{l \ge k-2} \Delta_{l} f  \Delta_{l}g \Big|^{2}\Big]^{\frac{1}{2}}\right\|_{L^{p}}\\
 & = C \left\| \left[ \left\{ 2^{ks} \Big( \sum_{l \ge k-2} \Big( 2^{-l s} \Big)^{2} \Big)^{\frac{1}{2}} \times \Big( \sum_{l} \Big( \Delta_{k} (\Delta_{l}f 2^{ls} \Delta_{l}g) \Big)^{2} \Big)^{\frac{1}{2}} \right\}^{2} \right]^{\frac{1}{2}} \right\|_{L^{p}} \\
 & \le C \left\| \Big[ \sum_{k} \sum_{l} \Big( \Delta_{k} \big( \Delta_{l}f 2^{ls} \Delta_{l}g \big) \Big)^{2}\Big]^{\frac{1}{2}} \right\|_{L^{p}}.
 \end{split}
\end{equation}
We now need to use the extension of the Littlewood-Paley operator \cite{LR}: if $\mathbb{L}: L^{p} \rightarrow L^{p}l^{2}$ is a Littlewood-Paley operator, then $\cl: L^{p}l^{2} \rightarrow L^{p}l^{2}l^{2}$ is the extension of $\mathbb{L}$ such that
\[
\left\| \cl\right\|_{L^{p}l^{2} \rightarrow L^{p}l^{2}l^{2}} \le C \left\| \mathbb{L}\right\|_{L^{p} \rightarrow L^{p}l{2}}.
\]
Using this relation, we can replace the last term in (\ref{eq:5.7}) by
\begin{equation} \label{eq:5.8}
 \begin{split}
 \left\|\Lambda^{s}(c)\right\|_{L^{p}} & \le C \left\| \Big[  \sum_{l} \Big( \Delta_{l}f 2^{ls} \Delta_{l}g \Big)^{2}\Big]^{\frac{1}{2}} \right\|_{L^{p}} \le C \left\| \Big[  \cm f \sum_{l} \Big(2^{ls} \Delta_{l}g \Big)^{2}\Big]^{\frac{1}{2}} \right\|_{L^{p}}\\
 & \le C \left\|\cm f \right\|_{L^{p_{2}}} \left\| \Big[ \sum_{l}\Big( 2^{ls}\Delta_{l}g\Big)^{2}\Big]^{\frac{1}{2}}\right\|_{L^{q}_{2}} \le C \|f\|_{L^{p_{2}}} \left\|\Lambda^{s}g\right\|_{L^{q_{2}}}.
 \end{split}
\end{equation}
By (\ref{eq:5.5}), (\ref{eq:5.6}), and (\ref{eq:5.8}), we obtain (\ref{eq:5.2}).
\end{proof}

\section*{Acknowledgments}
H.B. was partially supported by NSF grants DMS10-08397 and FRG07-57227 while A.B. was supported by the NSF grant DMS-1109532. H. B. also gratefully acknowledges the support by the Center for Scientific Computation and Mathematical Modeling (CSCAMM)  at University of Maryland  and the Department of Mathematics at the University of California Davis where this research was performed.

\end{document}